  \theoremstyle{plain}
  \newtheorem{Theorem}{Theorem}[section]
  \newtheorem{Lemma}{Lemma}[section]
  \newtheorem{Corollary}{Corollary}[section]
    \theoremstyle{remark}
  \newtheorem{remark}{Remark}
  \numberwithin{equation}{section}
  \numberwithin{figure}{section}
  \numberwithin{remark}{section}
\renewcommand{\baselinestretch}{1.00}
\begin{document}

\title{Oblique boundary value problems for augmented Hessian equations III}

\author{Feida Jiang}
\address{College of Mathematics and Statistics, Nanjing University of Information Science and Technology, Nanjing 210044, P. R. China}
\email{jfd2001@163.com}

\author{Neil S. Trudinger}
\address{Mathematical Sciences Institute, The Australian National University, Canberra ACT 0200, Australia; School of Mathematics and Applied Statistics, University of Wollongong, Wollongong, NSW 2522, Australia}
\email{Neil.Trudinger@anu.edu.au; neilt@uow.edu.au}

\thanks{Research supported by National Natural Science Foundation of China (No. 11771214),  Australian Research Council (No. DP170100929)}

\subjclass[2010]{35J66, 35J25, 35J70, 35D30}

\date{\today}

\keywords{Oblique boundary value problems, augmented Hessian equations, degenerate equations, local second derivative estimates, existence, uniqueness}

\abstract {In bounded domains, without any geometric conditions, we study the existence and uniqueness of  globally Lipschitz and interior strong $C^{1,1}$, (and classical $C^2$), solutions of general semilinear oblique boundary value problems for degenerate, (and non-degenerate), augmented Hessian equations, with strictly regular associated matrix functions. By establishing local second derivative  estimates at the boundary and proving viscosity comparison principles, we show that the solution is correspondingly smooth near boundary points where the appropriate uniform convexity is satisfied. 
}
\endabstract

\maketitle


\baselineskip=12.8pt
\parskip=3pt
\renewcommand{\baselinestretch}{1.38}

\section{Introduction}\label{Section 1}
\vskip10pt

In this paper, we continue our previous studies \cite{JT-oblique-I, JT-oblique-II} of oblique boundary value problems for elliptic solutions of augmented  Hessian equations and consider in particular problems with general boundaries so that at least our boundary conditions must be interpreted in a weak sense. Following Section 4.3 in \cite{JT-oblique-I}, we also treat the degenerate elliptic case. As in  \cite{JT-oblique-I, JT-oblique-II}, our boundary value problems have the general form 
\begin{equation}\label{1.1}
\mathcal{F}[u]:= F(D^2u-A(\cdot,u,Du))=B(\cdot,u,Du), \quad \mbox{in} \ \Omega,
\end{equation}
\begin{equation}\label{1.2}
\mathcal{G}[u]:=G(\cdot,u,Du)=0, \quad \mbox{on} \ \partial\Omega,
\end{equation}
where $\Omega$ is a bounded domain in $n$ dimensional Euclidean space $\mathbb{R}^n$, $u$ is the scalar unknown function, $Du$ and $D^2u$ denote the gradient vector and the Hessian matrix of $u$, $A$ is a $n\times n$ symmetric matrix function defined on $\Omega \times \mathbb{R} \times \mathbb{R}^n$, $B$ is a  scalar valued function on $\Omega \times \mathbb{R} \times \mathbb{R}^n$ and $G$ is a scalar valued function defined on $\partial \Omega \times \mathbb{R} \times \mathbb{R}^n$.  As usual, we use $x$, $z$, $p$ and $r$ to denote the points in $\Omega$, $\mathbb{R}$, $\mathbb{R}^n$ and $\mathbb{S}^{n}$ respectively, where 
$\mathbb{S}^n$ denotes the linear space of $n\times n$ symmetric matrices. The function $F$ is defined on an open set $\Gamma$ in $\mathbb{S}^{n}$, which is closed under addition of the positive cone. We shall use either $\mathcal{F}$ or $F$ to denote the general operator in \eqref{1.1}, and either $\mathcal{G}$ or $G$ to denote the boundary operator in \eqref{1.2}.
The boundary condition \eqref{1.2} is called oblique if the continuous function $G$ is  strictly increasing with respect to $p$ in the normal direction to $\partial\Omega$ at $x$, namely
\begin{equation}\label{1.3}
\frac{G(x,z,p+\lambda \nu(x))-G(x,z,p)}{\lambda} >0,
\end{equation}
for all $(x,z,p)\in \partial\Omega \times \mathbb{R} \times \mathbb{R}^n$ and $\lambda>0$, where $\nu(x)$ denotes the unit inner normal vector to $\partial\Omega$ at the point $x$. When $G$ is differentiable with respect to $p$, then the obliqueness \eqref{1.3} is equivalent to
\begin{equation}\label{1.4}
D_pG\cdot \nu > 0,
\end{equation}
for all $(x,z,p)\in\partial \Omega \times \mathbb{R} \times \mathbb{R}^n$, where $\nu$ denotes the unit inner normal vector field on $\partial\Omega\in C^1$. In this paper we restrict attention to the semilinear case
\begin{equation}\label{1.5}
\mathcal{G}[u]=G(\cdot, u, Du) = \beta\cdot Du - \varphi(\cdot, u) =0, \quad {\rm on} \ \partial \Omega,
\end{equation}
where $\beta$ is a vector field on $\partial\Omega$, satisfying $\beta \cdot \nu>0$ on $\partial\Omega$, and 
$\varphi$ is a scalar function on $\partial\Omega \times \mathbb{R}$. The standard  example of \eqref{1.5} is the semilinear Neumann boundary condition, where $\beta=\nu$ on $\partial\Omega$.

The oblique derivative problem \eqref{1.1}-\eqref{1.2} for augmented Hessian equations arises naturally  in the theory of fully nonlinear elliptic equations, through its applications in conformal geometry, optimal transportation, and geometric optics; see \cite{Tru2006, LiLi2006, SSChen2007, JLL2007, LiNguyen2014, JT2014, JT-oblique-I, JT-oblique-II, JT2018}. One can refer to \cite{JT-oblique-I} for  more detailed background and various explicit examples of the functions $F$, $G$ and $A$. In \cite{JT-oblique-I, JT-oblique-II}, we have established extensive existence theorems for classical solutions of problem \eqref{1.1}-\eqref{1.2} under appropriate domain convexity hypotheses. In this paper, we prove existence and uniqueness results for solutions in $C^{1,1}(\Omega)\cap C^{0,1}(\bar\Omega)$ in the degenerate  case, (and  $C^2(\Omega)\cap C^{0,1}(\bar\Omega)$ in the non-degenerate case), to the semilinear boundary value problem \eqref{1.1}-\eqref{1.5} on general bounded domains, (without  convexity assumptions), which satisfy  higher regularity near a boundary point where the appropriate uniform convexity is locally satisfied. Our treatment also embraces more general viscosity solutions of equation
\eqref{1.1} and such an extension will be pursued in future work.

Our assumptions for $F$ correspond to  those in \cite{JT-oblique-I}, although here we shall write them for non-smooth $F$.  Assuming now that $F\in C^0(\Gamma)$ where $\Gamma$ is a convex open cone $\subset  \mathbb{S}^n$, $\ne \mathbb{S}^n$, with vertex at $0$, containing the positive cone $K^+$, we formulate the following conditions on $F$, 
\begin{itemize}
\item[{\bf F1}:]
$F$ is strictly increasing in $\Gamma$, that is 
$$\liminf _{\sigma \rightarrow 0} \frac{F(r+\sigma \eta) - F(r)}{\sigma} >0$$
for any positive $\eta \in \mathbb{S}^{n}$;

\vspace{0.2cm}

\item[{\bf F1$^-$}:]
$F$ is nondecreasing in $\Gamma$, (that is $F(r+\eta) \ge F(r)$ for any non-negative $\eta \in \mathbb{S}^{n}$), with 

$$\mathscr{T^-}(r):= \liminf _{\sigma \rightarrow 0} \frac{F(r+\sigma I) - F(r)}{\sigma} > 0;$$

\vspace{0.2cm}

\item[{\bf F2}:]
$F$ is concave in $\Gamma$; 

\vspace{0.2cm}

\item[{\bf F3}:]
$F(\Gamma)=(a_0, \infty)$ for constant $a_0 \ge -\infty $, with
$\sup\limits_{r_0\in \partial\Gamma}\limsup\limits_{r\rightarrow r_0} F(r) \le a_0$;

\vspace{0.2cm}

\item[{\bf F4}:]
$F(tr)\rightarrow \infty$ as $t\rightarrow \infty$, for all $r\in \Gamma$;

\vspace{0.2cm}

\item[{\bf F5}:]
For given constants $a$, $b$ satisfying $a_0<a<b$, there exists a constant $\delta_0>0$ such that
$\mathscr{T^-}(r) \ge \delta_0$
 for all $r$ satisfying  $a< F(r) < b$;
 
 \vspace{0.2cm}

\item[{\bf F5\textsuperscript{+}}:]
$\mathscr{T^-}(r) \rightarrow \infty$ uniformly for $a <F(r) < b$ as $|r|\rightarrow \infty$.

\end{itemize}

These conditions coincide with those in \cite{JT-oblique-I}, when $F \in C^2(\Gamma)$, in which case  $\mathscr{T^-} = \mathscr{T}= {\rm{trace}}( F_r) $. As there, condition F5 is essentially superfluous as it is implied by  F1$^-$, F2, F3 and F4, while F4 is itself implied by F1$^-$, F2 and F3, when $a_0 > - \infty$. However as well as F5\textsuperscript{+}, we also need other refinements of F5, when the constant $\delta_0$ is independent of $a$ or $b$, which we designate respectively as F5(0) and F5($\infty$). In fact our primary examples of one positive homogeneous functions $F \in C^0(\bar \Gamma)$, which are positive, increasing and concave in $\Gamma$ and vanish on $\partial\Gamma$ , satisfy F1, F2, F3, (with $a_0 =0$), F4 and both  F5(0), F5($\infty$). Also in general when $a_0 > -\infty$,  F1$^-$, F2 and F3  imply F5(0), \cite{JT-oblique-I}. Similarly we can refine condition  F5\textsuperscript{+} when $a=a_0$ or $b=\infty$, so that in particular F5\textsuperscript{+}(0) is satisfied by the normalised $k$-Hessians $F_k$ in the cones $\Gamma_k$ for $k = 2,\cdots,n$, \cite{JT-oblique-I}, but F5\textsuperscript{+}($\infty$) is incompatible with F2.


As in \cite{JT-oblique-I, JT-oblique-II}, we call $M[u]: =D^2u-A(\cdot, u, Du)$ the augmented Hessian matrix. Assuming always that $F$ at least satisfies F1$^-$,  a function $u\in C^0(\Omega)$, is then called admissible for $F$ at a point $x_0\in  \Omega$ if $u$ is twice differentiable at $x_0$ and the augmented Hessian matrix $M[u](x_0) \in \bar\Gamma$. It follows then that the operator $\mathcal F$ is degenerate elliptic, with respect to $u$, at $x_0$, that is $F(M[u] + \eta)(x_0) \ge F(M[u])(x_0)$, for all $\eta\ge 0$, $\in \mathbb{S}^{n}$. If $F$ satisfies F1 and $M[u](x_0) \in \Gamma$, then $\mathcal F$ is elliptic, with respect to $u$, at $x_0$. Our weak form of the boundary condition \eqref{1.2} corresponds to that for viscosity solutions \cite {CIL1992}. Namely a function $u \in C^0(\bar\Omega)$ satisfies the inequality $\mathcal{G}[u] \ge 0$, $(\le 0)$, on $\partial\Omega$ weakly, with respect to the operator $\mathcal F$, if for any admissible function $\phi \in C^2(\bar\Omega)$, $x_0\in\partial\Omega$ satisfying $u \le, (\ge), \phi$ in $\bar\Omega$, $u(x_0) = \phi(x_0)$, we have either $\mathcal{G}[\phi](x_0) \ge 0$, $(\le 0)$ or  $\mathcal F[\phi](x_0) \ge, (\le), B(\cdot,\phi,D\phi)(x_0)$. The function $u \in C^0(\bar\Omega)$ then satisfies the boundary condition \eqref{1.2} weakly if  both $\mathcal{G}[u] \ge 0$ and $\mathcal{G}[u]\le 0$ on $\partial\Omega$ weakly. If $F$ satisfies F1, then we only need the inequalities $\mathcal{G}[\phi](x_0) \ge 0$, $(\le 0)$. In this paper we consider solutions $u$ at least in $C^{1,1}(\Omega)$ with \eqref{1.1} satisfied almost everywhere and postpone consideration of viscosity solutions in $C^0(\Omega)$ or $C^{0,1}(\Omega)$, under reduced structure conditions on $F$, to a future work. 

An important ingredient for regularity of solutions to equations involving the augmented matrix $M[u]$ is the co-dimension one convexity (strict convexity) condition on the matrix $A$ with respect to $p$, that is
\begin{equation}\label{strict regularity}
A_{ij}^{kl}(x,z,p)\xi_i\xi_j\eta_k\eta_l \geq 0, \ (>0 ),
\end{equation}
for all  $\xi,\eta\in\mathbb{R}^n$, $\xi\perp\eta$,
where $A_{ij}^{kl}=D^2_{p_kp_l}A_{ij}$ and $A$ is twice differentiable at $(x,z,p)\in\Omega\times \mathbb{R}\times\mathbb{R}^n$. As in \cite{JT-oblique-I}, we will assume at least that the matrix $A\in C^2$ is strictly regular in $\Omega$, that is the strict inequality in \eqref{strict regularity}, holds for all $(x,z,p)\in\Omega\times \mathbb{R}\times\mathbb{R}^n$. Also following  \cite{JT-oblique-I}, we need additional conditions for gradient estimates. In particular, we may strengthen the strict regularity condition by assuming that $A$ is uniformly regular in $\Omega$, namely that for any $M > 0$, there exist positive constants $\lambda_0$ and $\bar\lambda_0$ such that 
\begin{equation}\label{1.7}
\sum_{i,j,k,l}A_{ij}^{kl}\xi_i\xi_j\eta_k\eta_l \geq \lambda_0|\xi|^2|\eta|^2 - \bar\lambda_0(\xi\cdot\eta)^2,
\end{equation}
for all $\xi, \eta \in \mathbb{R}^n$, $x\in\Omega$, $|z|\le M$, $p\in \mathbb{R}^n$. For gradient bounds we also assume the functions $A$ and $B$ to satisfy quadratic growth conditions, analogous to those for quasilinear elliptic equations \cite {GTbook}. Namely,
\begin{equation} \label{quadratic growth}
D_xA, D_xB, D_zA, D_zB = O(|p|^2), \quad D_pA, D_pB = O(|p|),
\end{equation}
as $|p| \rightarrow \infty$, uniformly for $x\in\Omega$, $|z|\le M$ for any $M > 0$. 

For local boundary regularity, we shall assume a local version of our boundary convexity condition in
\cite{JT-oblique-I}. Assuming that $G \in C^0(\bar\Omega \times \mathbb{R} \times \mathbb{R}^n)$, $\partial \Omega\in C^2$, we call $\partial\Omega$ uniformly $(\Gamma, A, G)$-convex at $x_0\in \partial\Omega$, with respect to an interval $\mathcal {I}_0$, if 
\begin{equation}\label{curvature condition}
K_A(\partial\Omega) (x_0, z, p) + \mu \nu(x_0) \otimes \nu(x_0) \in \Gamma,
\end{equation}
for all  $z\in \mathcal{I}_0$, $G(x_0,z,p)\ge 0$, and some $\mu=\mu(x_0,z,p)>0$, where $K_A $ denotes the $A$-curvature matrix of $\partial\Omega$, given by
\begin{equation}
K_A( \partial\Omega) (x, z, p) = - \delta \nu(x) + P(D_pA(x,z,p)\cdot \nu(x))P
\end{equation}
at any point $x\in\partial\Omega$, where $\partial\Omega$ is twice differentiable, $\nu$ denotes the unit inner normal, $\delta=D-(\nu\cdot D)\nu$ denotes the tangential gradient and $P=I-\nu\otimes\nu$ is the projection matrix onto the tangent space. Also corresponding to our global definitions in \cite{JT-oblique-I} we call $\partial\Omega$ uniformly $(\Gamma, A, G)$-convex at $x_0\in \partial\Omega$, with respect to
 $u\in C^0 (\partial\Omega)$, if  $\mathcal {I}_0 = \{u(x_0)\}$. We also recall our previous terminology in \cite{JT-oblique-I} that $\mathcal P_k$, for $k= 1,\cdots, n$, denotes the cone in $\mathbb{S}^{n}$ where the sum of any $k$ eigenvalues is positive.
 
 The scope of our results is embodied in the following theorem, which covers both the degenerate and non-degenerate cases. As in \cite{JT-oblique-I}, we also assume the existence of sub and supersolutions  in order to have {\it a priori} solution bounds for our approximating problems. More general results and alternative hypotheses will be treated in conjunction with our proofs.

\begin{Theorem}\label{Th1.1}
Assume that $F$ satisfies conditions F1$^-$, F2 and F3, with $a_0 = 0$,  in $\Gamma$, $\Omega$ is a $C^{2,1}$ bounded domain in $\mathbb{R}^n$, $A\in C^2(\bar \Omega\times \mathbb{R}\times \mathbb{R}^n)$ is uniformly regular in $\Omega$, $B \ge 0, \in C^2(\bar \Omega\times \mathbb{R}\times \mathbb{R}^n)$, 
$\mathcal G$ is semilinear and oblique with $\beta\in C^{1,1}(\partial\Omega)$, $\varphi \in C^{1,1}(\partial\Omega\times \mathbb{R})$ and there exist a strict subsolution $\underline u\in  C^{1,1}(\bar \Omega)$ of \eqref{1.1}-\eqref{1.5} and a supersolution $\bar u \in  C^{1,1}(\bar \Omega)$ of \eqref{1.1}. Assume also that $A$ and $B$ satisfy the quadratic growth conditions \eqref{quadratic growth}, $A$, $B$ and $\varphi$ are nondecreasing with respect to $z$, with one of them strictly increasing and either (a) B is independent of $p$  or (b) $B$ is convex in $p$ and $F$ satisfies  F5\textsuperscript{+}(0) and F5($\infty$). 
Then we have the following:

{\rm (i)} there exists an admissible solution $u\in C^{1,1}(\Omega)\cap C^{0,1}(\bar\Omega)$ of the boundary value problem \eqref{1.1}-\eqref{1.5}; 

{\rm (ii)} the solution $u$ is unique if either $A$ is strictly increasing in $z$ or $\varphi$ is strictly increasing in $z$, with $A$ and $B$ independent of $z$;

{\rm (iii)} if also $\Gamma \subset \mathcal P_{n-1}$ and $\partial\Omega$ is uniformly $(\Gamma, A, G)$-convex at $x_0\in \partial\Omega$, with respect to $u$, then $u\in C^{1,1}(\Omega\cup (\mathcal N\cap \bar\Omega))$ for some neighbourhood $\mathcal N$ of $x_0$; 

{\rm (iv)} if $F$ also satisfies F1 and $B > 0$, then $u \in C^{2,\alpha}(\Omega\cup(\mathcal N\cap\bar\Omega))$ for some $\alpha>0$.
\end{Theorem}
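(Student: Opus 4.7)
The argument combines an $\varepsilon$-regularisation of the degenerate case (replacing $B$ by $B_\varepsilon := B + \varepsilon$), uniform a priori estimates, and localised boundary second derivative estimates. Since $\Omega$ is not assumed convex, the classical existence results of \cite{JT-oblique-I, JT-oblique-II} do not apply directly, so for (i) I would construct viscosity solutions $u_\varepsilon$ of the non-degenerate approximating problem by Perron's method, using $\underline u$ and $\bar u$ as ordered barriers and the viscosity comparison principle (established elsewhere in the present paper) to guarantee that the Perron envelope is continuous. Uniform $\varepsilon$-independent bounds in $C^{0,1}(\bar\Omega)$ then follow from the barriers, the quadratic growth conditions \eqref{quadratic growth}, the uniform regularity \eqref{1.7} of $A$, and the technique of Section~3 of \cite{JT-oblique-I}; under alternative (b) the conditions F5\textsuperscript{+}(0) and F5($\infty$) close the gradient estimate when $B$ is convex in $p$. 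Interior $C^{1,1}$ bounds on compact subsets of $\Omega$ are available from second derivative estimates that do not require any boundary convexity. The limit $\varepsilon \to 0$, taken via weak-$*$ compactness of the Hessians in $L^\infty_{\rm loc}(\Omega)$ and Arzelà--Ascoli on $\bar\Omega$, produces $u \in C^{1,1}(\Omega)\cap C^{0,1}(\bar\Omega)$ satisfying \eqref{1.1} almost everywhere and \eqref{1.5} weakly.

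The uniqueness assertion (ii) is a direct application of the viscosity comparison principle: with strict $z$-monotonicity of one of $A$, $B$, or $\varphi$, the standard doubling-of-variables argument adapted to the semilinear oblique condition yields comparison between any two $C^{1,1}(\Omega)\cap C^{0,1}(\bar\Omega)$ solutions, while the remaining case ($A$, $B$ independent of $z$ and $\varphi$ strictly $z$-increasing) follows by subtracting the equations and exploiting concavity F2.

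For (iii) and (iv), fix $x_0 \in \partial\Omega$ at which $\partial\Omega$ is uniformly $(\Gamma, A, G)$-convex with respect to $u$. In a chart near $x_0$ with $\nu = e_n$, double tangential differentiation of the boundary condition $\beta \cdot Du = \varphi$ controls the tangential and mixed second derivatives of $u$ along $\partial\Omega$ in terms of the Lipschitz data and $Du$. A localised auxiliary function built from the $A$-curvature matrix $K_A$ and the uniform $(\Gamma, A, G)$-convexity at $x_0$ then provides a barrier for the oblique derivative $\beta \cdot D(\beta \cdot Du)$ on a small half-ball $B_r(x_0) \cap \bar\Omega$. The hypothesis $\Gamma \subset \mathcal P_{n-1}$ is next invoked to transfer control of the tangential eigenvalues of $M[u]$ to the normal-normal second derivative, closing the local $C^{1,1}$ estimate via \eqref{1.1} and F1$^-$ and yielding $u \in C^{1,1}(\Omega \cup (\mathcal N \cap \bar\Omega))$. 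Under the stronger hypotheses of (iv) the equation becomes strictly elliptic relative to $u$ on $\mathcal N \cap \bar\Omega$ with concave structure (F2), and Evans--Krylov interior estimates combined with Krylov-type boundary estimates for linear oblique problems and Schauder theory upgrade this to $u \in C^{2,\alpha}(\Omega \cup (\mathcal N \cap \bar\Omega))$.

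The principal technical difficulty is the local boundary $C^{1,1}$ estimate in (iii). In contrast to the global case of \cite{JT-oblique-I}, the barrier must be confined to a small half-ball near $x_0$ and must absorb all cross terms arising from the strictly regular matrix $A$ purely locally; in the degenerate setting, where only F1$^-$ is available, it must also survive the loss of strong ellipticity, so that the $\mathcal P_{n-1}$-closure argument controlling $D_{nn} u$ from the tangential eigenvalues cannot rely on any global maximum principle. Handling this localisation while retaining compatibility with the subsequent Evans--Krylov and Krylov boundary estimates needed for (iv) is the essential new technical point.
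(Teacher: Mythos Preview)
Your overall architecture is reasonable, but there is a genuine gap in part (i) that would prevent the argument from closing.

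\textbf{The regularisation is wrong.} Replacing $B$ by $B_\varepsilon = B+\varepsilon$ does \emph{not} make the approximating problem non-degenerate in any useful sense: under F1$^-$ the operator $F$ can still fail to be strictly (let alone uniformly) elliptic on $\Gamma$, regardless of the sign of the right-hand side. Consequently you cannot appeal to any classical existence theory for $u_\varepsilon$ on a non-convex $\Omega$, and Perron's method only returns a viscosity solution. You then need interior $C^{1,1}$ regularity of that viscosity solution to take the limit, but the interior second-derivative estimates of \cite{JT-oblique-I} are \emph{a priori} estimates proved for $C^4$ solutions; applying them to a merely continuous Perron envelope is exactly the regularity question you are trying to settle, so the argument is circular. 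The paper avoids this by regularising the \emph{operator}, setting $F^\epsilon(r)=F(r+\epsilon\,\mathrm{tr}(r)\,I)$, which is uniformly elliptic on $\Gamma$ with ratio $1+1/\epsilon$. Then the Lieberman--Trudinger theory gives classical $C^{2,\gamma}(\bar\Omega)$ solutions $u_\epsilon$ on an arbitrary $C^{2,1}$ domain (no convexity needed, because uniform ellipticity handles the double-normal derivative), and the $\epsilon$-independent interior $C^{1,1}$ and global $C^{0,1}$ estimates let one pass to the limit.

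\textbf{Part (iii) inherits the same problem.} You propose to differentiate the boundary condition and build barriers directly for $u$, but at this stage $u$ is only $C^{0,1}$ up to $\partial\Omega$ and you cannot differentiate it there. The paper instead applies the local boundary second-derivative estimate (its Theorem~2.1) to the \emph{smooth} approximants $u_\epsilon$, obtains a bound on $\sup_{\mathcal N\cap\Omega}|D^2u_\epsilon|$ uniform in $\epsilon$, and then uses the uniqueness from (ii) to identify the limit with $u$. Without classical approximants this route is unavailable.

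\textbf{A smaller point on (ii).} The case where $\varphi$ is strictly increasing with $A,B$ independent of $z$ is not a routine doubling-of-variables argument and cannot be done by ``subtracting the equations and exploiting F2'': one needs the barrier $\eta=e^{K(\underline u-u)}$ built from the strict subsolution (the paper's Lemma~4.1 and Theorem~4.3(i)) to force the maximum of $v-u$ to the boundary before the obliqueness of $\mathcal G$ can be used.
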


\begin{remark}\label{Rem 1.1}
Under our hypothesis of uniform regularity of $A$, Theorem \ref{Th1.1}  is established for general oblique boundary value problems \eqref{1.5}. When $A$ is assumed only strictly regular, the conclusions in Theorem \ref{Th1.1} still hold under further conditions on $F$, $\beta$, $A$ and $B$. To state these,  we write  condition F7 from \cite{JT-oblique-I} in the weak form:
\begin{itemize}
\item[{\bf F7}:]
For a given constant $a>a_0$, there exists constants $\delta_0, \delta_1>0$ such that 
$$\liminf _{\sigma \rightarrow 0} \frac{F(r+\sigma \xi\otimes \xi) - F(r)}{\sigma} \ge \delta_0 + \delta_1 \mathscr{T}^+(r),$$ 
if $a\le F(r)$, $\xi$ is a unit eigenvector of $r$ corresponding to a negative eigenvalue and 

$$\mathscr{T^+}(r):= \limsup _{\sigma \rightarrow 0} \frac{F(r+\sigma I) - F(r)}{\sigma}.$$

\end{itemize}
Consistent with our notation above, when the constants $\delta_0, \delta_1>0$ are independent of $a$, we will refer to condition F7 as F7(0). Then for strictly regular $A$, the conclusions in Theorem \ref{Th1.1} hold if
$|\frac{\beta}{\beta\cdot\nu} - \nu|<1/\sqrt{n}$, $\mathcal{F}$ is orthogonally invariant and either 
(a) $F$ satisfies condition F7(0), (or F7 if $\inf B> 0$), and $A=o(|p|^2)I$ in \eqref{quadratic growth} or (b) $A=o(|p|^2)I$, $p\cdot D_pA \le o(|p|^2)I$, $p\cdot D_pB\le o(|p|^2)$ in \eqref{quadratic growth} or (c) $\beta=\nu$, $\Gamma\subset \Gamma_k$ with $k>n/2$ and $\Omega$ is convex. These alternative conditions to the uniform regularity of $A$ are used for the gradient estimates; see Theorems 1.3 and 3.1, and Remark 3.3 in \cite{JT-oblique-I} for more details, as well as Section  \ref{Section 3}, where we will show that the hypothesis F7 in Theorem 1.3 of \cite{JT-oblique-I} can be replaced by F2 and F5($\infty$), under the stronger growth conditions in (b).

As introduced in \cite{JT-oblique-I}, typical examples of uniformly regular matrices $A$ are given by
\begin{equation}\label{typical examples for A}
A(x,z,p)=\frac{1}{2}a_{kl}(x,z)p_kp_l I - a_0(x,z)p\otimes p,
\end{equation}
where $a_{kl}, a_0\in C^2(\bar \Omega\times \mathbb{R})$ and the matrix $\{a_{kl}\}>0$ in $\bar\Omega \times \mathbb{R}$. When $a_{kl}=\delta_{kl}$ and $a_0=1$, \eqref{typical examples for A} is related to the Schouten tensor in conformal deformation, see \cite{Tru2006, JT-oblique-I}. Note also that in Theorem \ref{Th1.1}, the operator $F$ is in the class $C^0(\Gamma)$ and need not be orthogonally invariant so that in particular Theorem \ref{Th1.1} readily applies to the Bellman type augmented Hessian equations in Section 4.2 in \cite{JT-oblique-I}.
\end{remark}

\begin{remark}\label{Rem 1.2}
We also have the uniqueness in assertion (ii) of Theorem \ref{Th1.1}  if $B$ is strictly increasing in $z$ and $\mathscr{T}(r)$ is bounded from above; (see case (iii) in Theorems 4.2 and 4.3). In general, without uniqueness, if the monotonicity, subsolution and supersolution hypotheses in Theorem \ref{Th1.1} are replaced by conditions (4.6) and (4.7) in \cite{JT-oblique-I}, the existence in assertion (i) of Theorem \ref{Th1.1} still holds by using the Leray-Schauder principle, Theorem 5.1 in \cite{GDbook}. 
\end{remark}

\begin{remark}\label{Rem 1.3}
 Clearly if $\partial\Omega$  is uniformly $(\Gamma, A, G)$-convex with respect to $u$ at every boundary point, we have $u\in C^{1,1}(\bar \Omega)$ and $u\in C^{2,\alpha}(\bar \Omega)$ in assertions (iii) and (iv) of Theorem \ref{Th1.1} respectively and we recover in particular our classical existence results in \cite{JT-oblique-I}.
\end{remark}
 
 \begin{remark}\label{Rem 1.4}
 It will also be clear from our proofs that the condition $F(\Gamma) = (a_0,\infty)$ in F3 can be replaced by $F(\Gamma) \supset (a_0,\infty)$, (assuming also F4 for (iii) and (iv)). Moreover, we can alternatively replace in Theorem \ref{Th1.1}, in accord with \cite{ITW2004}, the cone $\Gamma$ by any convex open set $D\subset \mathbb{S}^n$, $\ne \mathbb{S}^n$, with $0\in\partial D$, which is closed under addition of the positive cone, $K^+$, provided its asymptotic cone $\Gamma$ is used in our hypotheses of ($\Gamma, A, G$) -convexity and we assume $F$ also satisfies F5(0) in $D$, (or F5 if $\inf B > a_0$). Here the asymptotic cone of $D$ is defined as the largest convex open cone $\Gamma$, with vertex at $0$, such that $D$ is closed under addition of $\Gamma$. In this more general situation conditions  F1$^-$, F2 and F3 continue to imply F4 and F5(0) hold in the asymptotic cone $\Gamma$ but not necessarily in $D$. 
 \end{remark}

The organization of this paper is as follows.

In Section \ref{Section 2}, we consider local boundary second derivative estimates for solutions of \eqref{1.1}-\eqref{1.5} in the mixed tangential-oblique, pure tangential, and pure oblique directions, which are the local versions of the corresponding global boundary estimates in Section 2 in \cite{JT-oblique-I}. The resultant local boundary second derivative estimates are established in Theorem \ref{Th2.1}, through modification of the proof of Theorem 1.2 in \cite{JT-oblique-I}, in accordance with Remarks 2.1 and 3.6 in \cite{JT-oblique-I}. Note that in this section we always assume $F\in C^2(\Gamma)$, which suffices for obtaining the local estimates for the approximating problems with smooth operators.

In Section \ref{Section 3}, we prove in Theorem \ref{Th3.1} the existence of solutions to the problem \eqref{1.1}-\eqref{1.5} in $C^{1,1}(\Omega)\cap C^{0,1}(\bar\Omega)$ ($C^{2,\alpha}(\Omega)\cap C^{0,1}(\bar\Omega)$ for some $\alpha\in (0,1)$) under F1$^-$ (F1) and $B\ge 0$ ($B>0$) by solving the regularized problem and using approximations. We also consider alternative hypotheses for the gradient estimates, in accord with our Remark \ref{Rem 1.1} above, leading to more general versions of Theorems \ref{Th1.1} and \ref{Th3.1}.

In Section \ref{Section 4}, we first prove a comparison principle in Theorem \ref{Th4.1} for solutions of the equation \eqref{1.1} based on a barrier construction. Under proper monotonicity assumptions on $A$, $B$ and $G$, we then study the comparison principles for solutions of \eqref{1.1}-\eqref{1.5} in $C^{2}(\Omega)\cap C^{0,1}(\bar \Omega)$ and $C^{1,1}(\Omega)\cap C^{0,1}(\bar \Omega)$ in Theorems \ref{Th4.2} and \ref{Th4.3} respectively, which gives the uniqueness for solutions obtained in Section \ref{Section 3} in corresponding function spaces. Moreover, if there exists a boundary point satisfying the curvature condition \eqref{curvature condition}, we show higher regularity of $u$ in a boundary neighbourhood with the help of the local second derivative estimate in Theorem \ref{Th2.1}. Finally, the assertions of Theorem \ref{Th1.1} are proved.

To conclude the introduction, we remark that the  results here are already  foreshadowed in Section 4.4 in \cite{JT-oblique-I}.
Namely in the nondegenerate (or degenerate) case, if we drop the domain convexity conditions, we still infer existence of classical (or strong) solutions of equation \eqref{1.1} which are globally Lipschitz continuous and satisfy the boundary condition \eqref{1.5} in a weak viscosity sense so that the domain convexity conditions become conditions for boundary regularity.

The notation of this paper, unless otherwise specified, follows \cite{JT-oblique-I, JT-oblique-II}.

\vspace{3mm}


\section{Local second derivative estimates}\label{Section 2}
\vskip10pt

In this section, we prove local second derivative estimates for the solutions of oblique boundary value problem \eqref{1.1}-\eqref{1.5}, as already asserted in Remarks 2.1 and 3.6 in \cite{JT-oblique-I}. Since we already have the interior second derivative estimate (1.14) in Theorem 1.1 in \cite{JT-oblique-I}, here we only need to focus on the local second derivative estimates near the boundary.

For a fixed point $x_0\in \bar\Omega$ and a positive constant $R$, we use $B_R:=B_R(x_0)$ to denote the ball of radius $R$ and centre  $x_0$.  For $x_0\in \partial\Omega$, we assume in this section that $\nu$ and $\varphi$ in $B_R\cap \partial\Omega$ have been smoothly extended to $\bar B_R\cap \bar \Omega$, so that $\nu$ and $\varphi(\cdot, z)$ are constant along normals of $B_R(x_0)\cap\partial\Omega$.
First, by differentiating the boundary condition \eqref{1.5} with respect to a tangential vector field $\tau$, we obtain
\begin{equation}\label{tangential diff G}
D_{\beta\tau}u = \tilde D_{x_\tau} \varphi - D_\tau \beta \cdot Du, \quad {\rm on} \ B_R\cap \partial\Omega,
\end{equation}
where $\tilde D_{x_\tau}=\tau\cdot \tilde D_x$ and $\tilde D_x = D_x + Du D_z$.
Then from \eqref{tangential diff G} we have an estimate for mixed tangential-oblique second order derivatives,
\begin{equation}\label{tan obl}
|u_{\tau\beta}| \le C, \quad {\rm on} \  B_R\cap\partial\Omega,
\end{equation}
for any unit tangential vector field $\tau$, where the constant $C$ depends on $B_R\cap\Omega$, $\beta$, $\varphi$ and $|u|_{1;B_R\cap\Omega}$.


We next treat the local pure tangential second order derivative estimates for $u$, for which the strictly regular condition \eqref{strict regularity} for the matrix $A$ is critical. As in \cite{JT-oblique-I}, it is convenient here to use \eqref{strict regularity} to express the strict regularity of $A$ with respect to $u$, in the form
\begin{equation}\label{strict regularity c1}
A_{ij}^{kl}(\cdot, u, Du) \xi_i\xi_j\eta_k\eta_l \ge c_0 |\xi|^2|\eta|^2 - c_1 (\xi\cdot \eta)^2,
\end{equation}
for arbitrary vectors $\xi, \eta\in \mathbb{R}^n$, where $c_0$ and $c_1$ are positive constants depending on $A$ and $\sup(|u|+|Du|)$. 
Since the estimates will depend on the obliqueness, we can assume that
\begin{equation}\label{ob}
\beta \cdot \nu \ge \beta_0, \quad {\rm on} \ \partial\Omega,
\end{equation}
where $\beta_0$ is a positive constant.
Setting
\begin{equation}
M_2(R)=\sup_{B_R\cap \Omega}|D^2u|, 
\end{equation}
we formulate the local pure tangential second order derivative estimates on the boundary in terms of $M_2(R)$ as follows.
\begin{Lemma}\label{Lemma 2.1}
Let $u\in C^2(\bar \Omega)\cap C^4(\Omega)$ be an admissible solution of the boundary value problem \eqref{1.1}-\eqref{1.2} in a bounded domain $\Omega\subset \mathbb{R}^n$ with $\partial\Omega\in C^{2,1}$. Assume that $F\in C^2(\Gamma)$ satisfies conditions F1-F3 and F5 in the cone $\Gamma\subset \mathbb{S}^n$, $A\in C^2(\bar \Omega \times \mathbb{R} \times \mathbb{R}^n)$ is strictly regular, $B>a_0, \in C^2(\bar\Omega\times \mathbb{R} \times \mathbb{R}^n)$, $G\in C^2(\partial\Omega \times \mathbb{R} \times \mathbb{R}^n)$ is semilinear and oblique satisfying  \eqref{1.5} and \eqref{ob}, and either F5\textsuperscript{+} holds or $B$ is convex with respect to $p$. Then for any $x_0\in \partial\Omega$, $0<R<1$ and ball $B_R=B_R(x_0)$, we have the estimate
\begin{equation}\label{local pure + estimate}
\sup_{|\tau|=1, \tau\cdot \nu=0} u_{\tau\tau}(x_0) \le \epsilon M_2(R) +  C_\epsilon(1+\frac{1}{R^2}),
\end{equation}
for any positive constant $\epsilon>0$, where $\nu$ is the unit inner normal vector at $x_0$, $C_\epsilon$ is a constant depending on $\epsilon, B_R\cap \Omega, F, A, B, \varphi, \beta, \beta_0$ and $|u|_{1;B_R\cap \Omega}$. Furthermore, if $\Gamma\subset \mathcal{P}_{n-1}$, then for any constant $\epsilon>0$, we have the estimate
\begin{equation}\label{local pure prime estimate}
\sup_{|\tau|=1, \tau\cdot \nu=0} |u_{\tau\tau}(x_0)| \le \epsilon M_2(R) +  C_\epsilon(1+\frac{1}{R^2}),
\end{equation}
for any positive constant $\epsilon>0$, where $C_\epsilon$ is again a constant depending on $\epsilon, B_R\cap \Omega, F, A, B, \varphi, \beta, \beta_0$ and $|u|_{1;B_R\cap \Omega}$.
\end{Lemma}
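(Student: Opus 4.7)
The plan is to adapt the proof of the global pure-tangential boundary estimate (Theorem 1.2 in \cite{JT-oblique-I}) to the local setting by inserting a suitable cutoff factor, as already foreshadowed in Remarks 2.1 and 3.6 of \cite{JT-oblique-I}. I would fix a standard cutoff $\eta\in C_0^2(B_R)$ with $\eta\equiv 1$ on $B_{R/2}$, $|D\eta|\le C/R$ and $|D^2\eta|\le C/R^2$. Using the smooth extensions of $\nu$ and $\varphi(\cdot,z)$ already fixed in the opening of this section, I would consider an auxiliary function of the form
\[
w(x,\xi) = \eta^2(x)\bigl[u_{\xi\xi}(x) + b(x,\xi,Du)\bigr],
\]
where $\xi$ ranges over unit vectors tangential to $\partial\Omega$ at the projection of $x$ onto $\partial\Omega$, and $b$ is exactly the boundary-correction term from the global proof, designed so that on $B_R\cap\partial\Omega$ the combination $u_{\xi\xi}+b$ is controlled by $|u|_{1;B_R\cap\Omega}$ via \eqref{tangential diff G} and the obliqueness \eqref{ob}. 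On $\partial B_R\cap\bar\Omega$ the factor $\eta$ vanishes, so the maximum of $w$ over $\bar B_R\cap\bar\Omega$ is either already bounded by $|u|_{1;B_R\cap\Omega}$ or is attained at an interior point $x^*\in B_R\cap\Omega$.

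At an interior maximum I would differentiate $w$ twice, apply the linearisation of \eqref{1.1} along $\xi$, and use the concavity F2 exactly as in the global argument, so that the strict regularity of $A$ in the form \eqref{strict regularity c1} produces a sign-definite term dominated by $c_0 F_r^{ij}u_{i\xi\xi}u_{j\xi\xi}$; under F5, together with either F5\textsuperscript{+} or the convexity of $B$ in $p$, this controls the unfavourable curvature and boundary-extension contributions as in the global case. The novelty comes from the extra terms produced by differentiating $\eta^2$, which take the shape $\eta F_r^{ij}\eta_i(u_{\xi\xi})_j$ and $u_{\xi\xi}F_r^{ij}\eta_{ij}$. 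Using the first-order condition $Dw(x^*)=0$ to rewrite $\eta (u_{\xi\xi})_j$ in terms of $\eta_j u_{\xi\xi}$, the former reduces to an expression of the form $u_{\xi\xi} F_r^{ij}\eta_i\eta_j$; after Cauchy's inequality both cutoff-error contributions become $\epsilon u_{\xi\xi}^2 + C_\epsilon\,\mathscr{T}^-(r)(1+1/R^2)$. Absorbing $\epsilon u_{\xi\xi}^2$ into the dominant good quadratic term, and using $|u_{\xi\xi}(x^*)|\le M_2(R)$ on the remaining linear occurrence of $u_{\xi\xi}$, one arrives at \eqref{local pure + estimate} by evaluating at $x_0$ where $\eta=1$. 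The main obstacle is the careful accounting needed to absorb the third-derivative cutoff term without disturbing the combination of F5 with F5\textsuperscript{+} or the $p$-convexity of $B$ used to dominate the $B_\xi, B_{\xi\xi}$ contributions, and to ensure the $\mathscr{T}^-$ factor from the cutoff error is controlled by the same mechanism that controls the $B$-derivatives.

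For the two-sided bound \eqref{local pure prime estimate} under $\Gamma\subset\mathcal{P}_{n-1}$, admissibility forces the sum of any $n-1$ eigenvalues of $M[u](x_0)$ to be nonnegative. Diagonalising $P D^2u(x_0) P$ in the tangent space of $\partial\Omega$ at $x_0$ and recalling that $A$ is controlled in $C^0$ in terms of $|u|_{1;B_R\cap\Omega}$, a lower bound on any single tangential second derivative $u_{\tau\tau}(x_0)$ follows from the one-sided upper bound \eqref{local pure + estimate} applied to the other $n-2$ tangential eigendirections, together with the mixed estimate \eqref{tan obl} and the bound on $u_{\nu\nu}(x_0)$ from admissibility. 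The resulting two-sided estimate again has the form $\epsilon M_2(R)+C_\epsilon(1+1/R^2)$, possibly after a further adjustment of $\epsilon$.
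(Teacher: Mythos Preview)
Your boundary step does not work as stated. You claim that on $B_R\cap\partial\Omega$ the combination $u_{\xi\xi}+b$ is controlled by $|u|_{1;B_R\cap\Omega}$ via \eqref{tangential diff G}, so that a boundary maximum of $w$ would already be bounded. But \eqref{tangential diff G} only controls the mixed derivative $u_{\beta\tau}$, not the pure tangential derivative $u_{\tau\tau}$; the latter is exactly the quantity to be estimated and admits no a~priori bound in terms of first derivatives. No choice of the first-order correction $b$ from the global argument (in the paper $b=\frac{c_1}{2}|u_\tau|^2$) changes this. So if your auxiliary function $w$ attains its maximum on $B_R\cap\partial\Omega$, you are stuck.

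The paper closes this gap by a genuinely different mechanism. One first locates the point $y_0\in\bar B_R\cap\partial\Omega$ and tangential direction where $v_\tau=\zeta^2(u_{\tau\tau}+\frac{c_1}{2}|u_\tau|^2)$ is maximal over the boundary, and then builds a comparison function $v=\tilde v_1-\zeta^2(y_0)w_1(y_0)f-K(1+M_2(R))\zeta^2\phi$, where $f>0$ with $f(y_0)=1$ and $\phi$ is a negative defining function. Differentiating the boundary condition \emph{twice} tangentially yields $D_\beta u_{11}(y_0)\ge -C(1+M_2(R))$; combined with the barrier term $-K(1+M_2(R))\zeta^2\phi$ and the choice $K>C/\beta_0$ this forces $D_\beta v(y_0)>0$, while $v\le 0$ on $B_R\cap\partial\Omega$ and on $\partial B_R\cap\Omega$. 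Hence the maximum of $v$ is pushed to an interior point, where the linearised inequality (your interior analysis, in spirit) gives $\zeta^4\mathscr T|u_{11}|^2\le C(1+R^{-2})\mathscr T(1+M_2(R))$ and thus $\zeta^2 u_{11}\le\sqrt{C(1+R^{-2})(1+M_2(R))}\le\epsilon M_2(R)+C_\epsilon(1+R^{-2})$. This square-root structure is also why the $\epsilon M_2(R)$ factor appears; it is not obtained by simply bounding a linear occurrence of $u_{\xi\xi}$ by $M_2(R)$ as you suggest.

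A secondary point: the cutoff is not a standard $C_0^2$ bump. The paper uses a cutoff $\zeta$ with $D_\beta\zeta=0$ on $\partial\Omega\cap S_\zeta$ (constructed in Section~3.2 of \cite{JT-oblique-I}). This is what makes the computation of $D_\beta(\zeta^2\tilde w_1)$ on the boundary clean, so that the $\beta$-derivative of $v$ at $y_0$ reduces to the contribution from $\tilde w_1$ and the barrier term; with a generic cutoff you would pick up uncontrolled terms $2\zeta(D_\beta\zeta)\tilde w_1$ involving second derivatives of $u$.

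Your argument for \eqref{local pure prime estimate} under $\Gamma\subset\mathcal P_{n-1}$ is essentially the same as the paper's.
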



The local pure tangential estimates \eqref{local pure + estimate} and \eqref{local pure prime estimate} for semilinear $G$ depend on the cut-off function $\zeta$ constructed in Section 3.2 of \cite{JT-oblique-I}. For $x_0\in \partial\Omega$ and a sufficiently small positive constant $R$, there exists a cut-off function $\zeta\in C^0(\bar B_R(x_0))\cap C^2(S_\zeta)$, such that 
\begin{equation}\label{cutoff 1}
0\le \zeta \le 1 \ {\rm in}\  \bar B_R(x_0)\cap S_\zeta,  \quad \zeta(x_0)=1,
\end{equation}
\begin{equation}\label{cutoff 2}
\zeta =0 \ {\rm on} \  \partial B_R(x_0)\cap S_\zeta, \quad D_\beta \zeta=0 \ {\rm on} \ \partial\Omega \cap S_\zeta,
\end{equation}
and
\begin{equation}\label{cutoff 3}
|D\zeta| \le C/R, \  |D^2\zeta| \le C/R^2,  \ {\rm in} \  B_R(x_0)\cap S_\zeta,
\end{equation}
where $S_\zeta$ is the support of $\zeta$, $\beta$ is a vector field on $\partial\Omega$ satisfying \eqref{ob} and $C$ is a positive constant; (see the construction of such $\zeta$ at the end of the proof for Theorem 3.1 in \cite{JT-oblique-I}). 


\begin{proof}[Proof of Lemma \ref{Lemma 2.1}]
For $x_0\in \partial\Omega$, we first fix a sufficiently small positive $R$ with $\varphi, \beta$ and $\nu$ in $B_R\cap \partial\Omega$ smoothly extended to $\bar B_R\cap \bar\Omega$ and a cut-off function $\zeta$ satisfying \eqref{cutoff 1}, \eqref{cutoff 2} and \eqref{cutoff 3}, where $B_R=B_R(x_0)$. 
Setting 
\begin{equation}
w_{\tau} = u_{\tau\tau} + \frac{c_1}{2} |u_\tau|^2,
\end{equation}
we suppose that the function
\begin{equation}\label{2.50}
v_\tau =\zeta^2 w_{\tau}
\end{equation}
takes a maximum over $\bar B_R\cap\partial\Omega$ and unit tangential vectors $\tau$, at a point $y_0 \in\bar B_R\cap \partial\Omega$ and vector $\tau=\tau_0$ where $c_1$ is the constant in the strict regularity condition \eqref{strict regularity c1}. Without loss of generality, we may assume $y_0=0$ and $\tau_0=e_1$. Setting $b=\frac{\nu_1}{\beta\cdot \nu}$ and $\tau=e_1 - b\beta$, we then have, at any point in $B_R\cap\partial\Omega$,
\begin{equation}
v_1 = v_\tau + \zeta^2 [b(2u_{\beta\tau}+c_1u_\beta u_\tau )+ b^2 (u_{\beta\beta} + \frac{c_1}{2}|u_\beta|^2)],
\end{equation}
with $v_1=v_{e_1}$, $v_1(0)=v_\tau (0)$, $b(0)=0$ and $\tau(0)=e_1$. Setting
$$g: =\frac{1}{\beta\cdot \nu} (2u_{\beta\tau}+c_1u_\beta u_\tau ),$$
from \eqref{tangential diff G}, we have
\begin{equation}
|g-g(0)|\le  C(1+M_2(R))|x|, \quad {\rm on}\ B_R\cap \partial\Omega,
\end{equation}
where $C$ is a constant depending on $B_R\cap\partial\Omega, \varphi, \beta, \beta_0$ and $|u|_{1;B_R\cap\Omega}$. 
Accordingly, we have
\begin{equation}
v_1 - g(0)\zeta^2\nu_1 \le v_\tau + C_1(1+M_2(R)) \zeta^2 |x|^2 , \quad {\rm on} \ B_R\cap\partial\Omega,
\end{equation}
for a further constant $C_1$ depending on the same quantities.
Therefore, the function
\begin{equation}\label{2.51}
\begin{array}{rl}
\tilde v_1 : = \!\!&\!\!\displaystyle v_1 - \zeta^2[ g(0)\nu_1 + C_1(1 + M_2(R))|x|^2] \\
                 = \!\!&\!\!\displaystyle \zeta^2 [w_1 - g(0)\nu_1 - C_1(1 + M_2(R))|x|^2]
\end{array}
\end{equation}
satisfies
\begin{equation}\label{2.52}
\begin{array}{ll}
\tilde v_1 \!\!&\!\!\displaystyle \le v_\tau = \zeta^2|\tau|^2 w_{\tau/|\tau|} \le \zeta^2(0)w_1(0)|\tau|^2 \\
           \!\!&\!\!\displaystyle \le \zeta^2(0)w_1(0)(1-2b\beta_1 + b^2\sup |\beta|^2) \\
           \!\!&\!\!\displaystyle \le \zeta^2(0)w_1(0)(1-2\frac{\beta_1}{\beta\cdot \nu}(0)\nu_1 + C_1|x|^2)  \\
           \!\!&\!\!\displaystyle := \zeta^2(0)w_1(0) f , \quad {\rm on}\ B_R\cap \partial\Omega,
\end{array}
\end{equation}
where $w_1(0)=w_{e_1}(0)$, $f$ is a function in $C^2(\bar B_R\cap \bar \Omega)$ with $f(0)=1$. Here note that we can assume that $w_1(0)>0$, otherwise we have already obtained the pure tangential estimate from \eqref{2.50}. Moreover, by choosing $C_1$ in \eqref{2.52} sufficiently large, the function $f$ can be made positive in $\bar B_{R}\cap \bar \Omega$.
Now differentiating the boundary condition $D_\beta u- \varphi(\cdot, u)=0$ twice in a tangential direction $\tau$ with $\tau(0)=e_1$, and using \eqref{tan obl},  we obtain
\begin{equation}\label{2.56}
D_\beta u_{11}(0) \ge - C_1 (1 +  M_2(R)).
\end{equation}
By denoting 
\begin{equation}\label{tilde w1}
\tilde w_1:= w_1 - g(0)\nu_1 - C_1(1+M_2(R))|x|^2, 
\end{equation}
we have
\begin{equation}\label{Dbeta tilde v1 f}
\begin{array}{rl}
D_\beta [\tilde v_1 - \zeta^2(0)w_1(0) f ]
= \!\!&\!\!\displaystyle D_\beta (\zeta^2 \tilde w_1) - \zeta^2(0)w_1(0) D_\beta f \\
= \!\!&\!\!\displaystyle \zeta^2 D_\beta \tilde w_1- \zeta^2(0)w_1(0) D_\beta f, \quad {\rm on}\ B_R\cap \partial\Omega,
\end{array}
\end{equation}
where $D_\beta \zeta=0$ on $B_R\cap \partial\Omega$ is used.
From \eqref{2.56} and \eqref{Dbeta tilde v1 f}, at the point $0$ we have,
\begin{equation}\label{Dbeta v1}
D_\beta [\tilde v_1 - \zeta^2(0)w_1(0) f](0) \ge - C_1 \zeta^2(0)(1+M_2(R)),
\end{equation}
for a further constant $C_1$ depending on the same quantities. We then employ a new function
\begin{equation}\label{2.57}
v:= \tilde v_1 - \zeta^2(0)w_1(0) f  - K(1 + M_2(R))\zeta^2\phi,
\end{equation}
where $K$ is a constant to be determined, $\phi \in C^2(\bar \Omega)$ is a negative defining function for $\Omega$ satisfying $\phi=0$ on $\partial\Omega$, $D_\nu \phi=-1$ on $\partial\Omega$.
By fixing a large constant $K$ such that $K>C_1/\beta_0$, we have 
\begin{equation}\label{reason 1 for inter max}
\begin{array}{rl}
D_\beta v(0) = \!\!&\!\!\displaystyle D_\beta [\tilde v_1 - \zeta^2(0) w_1(0) f](0) - D_\beta [K(1 + M_2(R))\zeta^2\phi](0) \\
                  \ge \!\!&\!\!\displaystyle - C_1 \zeta^2(0)(1+M_2(R)) - K \zeta^2(0) (1 + M_2(R)) D_\beta\phi(0) \\
                  \ge \!\!&\!\!\displaystyle (K\beta_0 - C_1)\zeta^2(0)(1+M_2(R)) \\
                     > \!\!&\!\!\displaystyle 0,
\end{array}
\end{equation}
where \eqref{Dbeta tilde v1 f} and $D_\beta\zeta(0)=0$ are used in the first inequality, $D_\nu\phi(0)=-1$ and the obliqueness $\beta\cdot \nu\ge \beta_0$ are used in the second inequality. From \eqref{2.52} and since $\phi=0$ on $\partial\Omega$, it is readily checked that 
\begin{equation}\label{reason 2 for inter max}
v \le 0 \ {\rm on} \  B_R\cap \partial\Omega, \quad {\rm and} \ v(0)=0.
\end{equation}
Since $\zeta=0$ on the inner boundary $\partial B_R\cap \Omega$, $w_1(0)>0$ and $f>0$ on $\partial B_R\cap \Omega$, we have
\begin{equation}\label{reason 3 for inter max}
v = - \zeta^2(0) w_1(0) f \le 0, \quad {\rm on} \ \partial B_R \cap \Omega.
\end{equation}
Then from \eqref{reason 1 for inter max}, \eqref{reason 2 for inter max} and \eqref{reason 3 for inter max} it follows that the function $v$ defined in \eqref{2.57} must take its maximum over $\bar B_R\cap \bar \Omega$ at an interior point $\tilde y_0 \in B_R\cap \Omega$. This effectively reduces our argument to the proof of Theorem 1.1 in \cite{JT-oblique-I}. 

For completeness, we present the remaining proof. As in \cite{JT-oblique-I}, we define the linearized operator
\begin{equation}\label{linearized op full}
\mathcal{L}:= F^{ij}[D_{ij}-A_{ij}^k (\cdot, u, Du)D_k] - (D_{p_k}B(\cdot, u, Du))D_k,
\end{equation}
where $F^{ij}=\frac{\partial F}{\partial r_{ij}}(M[u])$, $A_{ij}^k =D_{p_k}A_{ij} $. By  differentiating equation \eqref{1.1} and using condition F2 and strict regularity \eqref{strict regularity c1}, we have by calculations,
\begin{equation}\label{mathcal Lv 1}
\mathcal{L}v \ge c_0 \zeta^2 \mathscr{T} |Du_1|^2 - C(1+\frac{1}{R^2})(1+\mathscr{T})(1+M_2(R)) + \lambda_B \zeta^2 |Du_1|^2,
\end{equation}
at the maximum point $\tilde y_0$, where the property \eqref{cutoff 3} of the cut-off function $\zeta$ is used, $\mathscr{T}={\rm trace}(F^{ij})$, $c_0$ is the constant in \eqref{strict regularity c1}, $C$ is a constant depending on $B_R\cap\partial\Omega, A, B, \varphi, \beta, \beta_0$ and $|u|_{1;B_R\cap\Omega}$ and $\lambda_B$ is the minimum eigenvalue of the matrix $D^2_pB$. Invoking conditions F5\textsuperscript{+}, or F5 and convexity of $B$ in $p$, from \eqref{mathcal Lv 1} we have
\begin{equation}\label{mathcal Lv 2}
\mathcal{L}v \ge \frac{c_0}{2} \zeta^4 \mathscr{T} |u_{11}|^2 - C(1+\frac{1}{R^2})\mathscr{T}(1+M_2(R)),
\end{equation}
at $\tilde y_0$, for a further constant $C$ depending in addition on $F$. Since $\mathcal{L}v(\tilde y_0) \le 0$, we derive from \eqref{mathcal Lv 2} that
\begin{equation} \label {interior est}
\zeta^2(\tilde y_0)u_{11}(\tilde y_0) \le \sqrt{\frac{2C}{c_0}(1+\frac{1}{R^2})(1 + M_2(R))} \le \epsilon M_2(R)+C_\epsilon(1+\frac{1}{R^2}),
\end{equation}
where $\epsilon$ is any positive constant, $C_\epsilon$ is a constant depending on $\epsilon, B_R\cap\partial\Omega, F, A, B, \varphi, \beta, \beta_0$ and $|u|_{1;B_R\cap\Omega}$ and Cauchy's inequality is used in the second inequality. 
Since $v$ takes its maximum in $\bar B_R\cap \bar \Omega$ at $\tilde y_0$, we have
\begin{equation}\label{reduce tilde y0 to y0}
\begin{array}{rl}
      v(0) \le \!\!&\!\!\displaystyle v(\tilde y_0) = \tilde v_1 (\tilde y_0) - w_1(0)\zeta^2(\tilde y_0)f(\tilde y_0)  - K(1+M_2(R)) \zeta^2(\tilde y_0) \phi(\tilde y_0) \\
             \le \!\!&\!\!\displaystyle \zeta^2 (\tilde y_0)[(u_{11}(\tilde y_0) + \frac{c_1}{2}|u_1(\tilde y_0)|^2)- g(0)\nu_1(\tilde y_0) - C_1(1 + M_2(R))|\tilde y_0|^2  \\
                  \!\!&\!\!\displaystyle  - K(1+M_2(R)) \phi(\tilde y_0) ] - \zeta^2(0)w_1(0)f(\tilde y_0).
\end{array}
\end{equation}
By fixing $\phi$ in \eqref{2.57} and the constant $C_1$ in \eqref{2.52} so that $\phi\ge -\frac{\epsilon}{4K}$ and $f\ge\frac{1}{2}$ in $B_R\cap\Omega$, and using $v(0)=0$ and \eqref{interior est}, we derive from \eqref{reduce tilde y0 to y0} that
\begin{equation}\label{v10 est}
v_1(0) = \zeta^2(0)w_1(0) \le \epsilon M_2(R)+C_\epsilon(1+\frac{1}{R^2}),
\end{equation}
for any constant $\epsilon>0$, and constant $C_\epsilon$ depending on $\epsilon, F, B_R\cap\partial\Omega, A, B, \varphi, \beta, \beta_0$ and $|u|_{1;B_R\cap\Omega}$. Since $v_\tau$ in \eqref{2.50} takes its maximum over $\bar B_R\cap\partial\Omega$ and unit tangential vectors $\tau$, at the point $0$ and the vector $e_1$, we have
\begin{equation}\label{vtau est}
v_\tau \le v_1(0), \quad {\rm on} \ B_R\cap \partial\Omega,
\end{equation}
for any unit tangential vector $\tau$. From \eqref{v10 est} and \eqref{vtau est}, since $\zeta(x_0)=1$, we have
\begin{equation}\label{utautau est}
u_{\tau\tau}(x_0) \le \epsilon M_2(R)+C_\epsilon(1+\frac{1}{R^2}),
\end{equation}
for any constant $\epsilon>0$, and any unit tangential vector $\tau$, where $C_\epsilon$ is a constant depending on $\epsilon, B_R\cap\partial\Omega, F, A, B, \varphi, \beta, \beta_0$ and $|u|_{1;B_R\cap\Omega}$. 
Then the estimate \eqref{local pure + estimate} follows by taking the supremum of \eqref{utautau est} over the unit tangential vectors at the point $x_0$. Furthermore, if $\Gamma\subset \mathcal{P}_{n-1}$, then the sum of any $n-1$ eigenvalues of the augmented Hessian matrix $M[u]$ is positive. Consequently, the estimate \eqref{local pure prime estimate} directly follows from \eqref{local pure + estimate}.

We have proved this lemma in the case when $R$ is sufficiently small. When $R$ is larger, we can first repeat the above argument in $B_{R^\prime} \cap \Omega$ for a fixed sufficiently small $R^\prime$, so that \eqref{local pure + estimate} and \eqref{local pure prime estimate} hold with $R$ replaced by $R^\prime$. Namely, we have
\begin{equation}\label{small sigma case}
\sup_{|\tau|=1, \tau\cdot \nu=0} u_{\tau\tau}(x_0)  \ ({\rm or} \ \sup_{|\tau|=1, \tau\cdot \nu=0} |u_{\tau\tau}(x_0)| ) \le \epsilon M_2(R^\prime) + C_\epsilon(1+\frac{1}{(R^\prime)^2}).
\end{equation}
For the fixed constant $R^\prime$, if we still denote $\frac{C_\epsilon}{(R^\prime)^2}$ by $C_\epsilon$ in \eqref{small sigma case}, then we get the estimates \eqref{local pure + estimate} and \eqref{local pure prime estimate} from \eqref{small sigma case}, since $0<R<1$ and $M_2(R^\prime) \le M_2(R)$ for $R^\prime<R$.
\end{proof}

In order to obtain the local second order derivative estimate on the boundary in pure oblique directions, we further assume that $F$ satisfies condition F4  and $\partial\Omega$ is uniformly $(\Gamma, A, G)$-convex at a boundary point, with respect to $u$. However, the strict regularity condition \eqref{strict regularity} for the matrix $A$ is not needed.
Settting
\begin{equation}
M_2^\prime(R)= \sup\limits_{B_{R}\cap \partial\Omega}\sup\limits_{|\tau|=1,\tau\cdot \nu=0} |u_{\tau\tau}|,
\end{equation}
we formulate the local pure oblique second order derivative estimates on the boundary in terms of $M_2(R)$ and $M_2^\prime(R)$ as follows.
\begin{Lemma}\label{Lemma 2.2}
Let $u\in C^2(\bar \Omega)\cap C^4(\Omega)$ be an admissible solution of the boundary value problem \eqref{1.1}-\eqref{1.2} in a bounded domain $\Omega\subset \mathbb{R}^n$ with $\partial\Omega\in C^{2,1}$. Assume that $F\in C^2(\Gamma)$ satisfies conditions F1-F5 in the cone $\Gamma\subset \mathbb{S}^n$, $A\in C^1(\bar \Omega \times \mathbb{R} \times \mathbb{R}^n)$, $B>a_0, \in C^1(\bar\Omega\times \mathbb{R} \times \mathbb{R}^n)$, $G\in C^2(\partial\Omega \times \mathbb{R} \times \mathbb{R}^n)$ is semilinear and oblique satisfying  \eqref{1.5} and \eqref{ob}, either F5\textsuperscript{+} holds or $B$ is independent of $p$. Assume also $\partial\Omega$ is uniformly $(\Gamma, A, G)$-convex at $x_0\in \partial\Omega$, with respect to $u$.
Then we have
\begin{equation}\label{local pure obl est}
u_{\beta\beta} (x_0) \le \epsilon M_2(R) + C_\epsilon (1+ M_2^\prime(R)) + \frac{C}{R^2},
\end{equation}
for any $0<R<1$ and any $\epsilon>0$, where $C$ is constant depending on $B_R\cap \Omega, A, \varphi, \beta$ and $|u|_{1;B_R\cap\Omega}$, and $C_\epsilon$ is a constant depending on $\epsilon, B_R\cap \Omega, F, A, B, \varphi, \beta, \beta_0$ and $|u|_{1;B_R\cap \Omega}$.
\end{Lemma}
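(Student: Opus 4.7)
The lemma should be a localisation of the pure oblique second derivative estimate proved globally in Theorem 1.2 of \cite{JT-oblique-I}, with the cut-off function $\zeta$ of \eqref{cutoff 1}--\eqref{cutoff 3} playing the same role as in the proof of Lemma \ref{Lemma 2.1}. Set $x_0 = 0$ and write $\beta(x_0) = (\beta\cdot\nu)(x_0)\nu(x_0) + \xi$ with $\xi$ the tangential component of $\beta(x_0)$; the identity
\begin{equation*}
u_{\beta\beta}(x_0) = (\beta\cdot\nu)^2(x_0)\,u_{\nu\nu}(x_0) + 2(\beta\cdot\nu)(x_0)\,u_{\nu\xi}(x_0) + u_{\xi\xi}(x_0),
\end{equation*}
together with $|u_{\xi\xi}(x_0)| \le |\xi|^2 M_2'(R)$ and the mixed tangential-oblique bound \eqref{tan obl}, reduces the task to estimating $u_{\beta\nu}(x_0)$ from above by the right-hand side of \eqref{local pure obl est}. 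The uniform $(\Gamma,A,G)$-convexity, namely $K_A(\partial\Omega)(x_0,u(x_0),Du(x_0)) + \mu\nu(x_0)\otimes\nu(x_0) \in \Gamma$ for some $\mu > 0$, furnishes by the same construction as in \cite{JT-oblique-I} a $C^2$ barrier $h$ on $\bar B_R \cap \bar\Omega$ with $h = 0$ on $\partial\Omega \cap B_R$, a definite normal derivative $D_\nu h(x_0)$, and with augmented Hessian $D^2 h - A(\cdot,u,Du) \in \Gamma$ near $\partial\Omega\cap B_R$; by F4 the barrier may then be scaled so that $F(D^2 h - A)$ exceeds $\sup_{B_R\cap\Omega} B$ by a fixed positive amount.

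\textbf{Auxiliary function and maximum principle.} Form
\begin{equation*}
\Phi(x) = \pm\,\zeta^2(x)\bigl(\beta(x)\cdot Du(x) - \varphi(x,u(x))\bigr) + K\bigl(1 + M_2'(R)\bigr)\,h(x) + \kappa\,\zeta^2(x)|x|^2,
\end{equation*}
with sign and positive constants $K, \kappa$ chosen so that $\Phi \le 0 = \Phi(0)$ on $\partial(B_R\cap\Omega)$, the first summand vanishing on $\partial\Omega\cap B_R$ and $\zeta$ vanishing on $\partial B_R\cap\Omega$. Apply the linearised operator $\mathcal{L}$ of \eqref{linearized op full} to $\Phi$; differentiating \eqref{1.1}, invoking F2 and F5 (so that $\mathscr{T}\ge\delta_0$) together with F5$^+$ when $B$ depends on $p$, and exploiting the barrier estimate from F4, produces at any interior maximum point $\tilde y_0$ an inequality controlling $\zeta^2(\tilde y_0)\,\mathscr{T}(\tilde y_0)\,|u_{\beta\nu}(\tilde y_0)|^2$ by a quantity of the form $C\bigl(1 + 1/R^2\bigr)\mathscr{T}(\tilde y_0)\bigl(1 + M_2(R)\bigr)$, the $1/R^2$ factor coming from \eqref{cutoff 3}. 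Cauchy's inequality then absorbs the $M_2(R)$ dependency into an $\epsilon M_2(R)$ contribution. A suitably large choice of $K$ forces the overall maximum of $\Phi$ to be attained at $x_0$; expanding $D_\nu\Phi(x_0) \le 0$ using $\zeta(x_0) = 1$, $D_\beta\zeta(x_0) = 0$ from \eqref{cutoff 2}, and the prescribed value of $D_\nu h(x_0)$, isolates $u_{\beta\nu}(x_0)$, and the reduction above yields \eqref{local pure obl est}.

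\textbf{Main obstacle.} The main technical point is coordinating the local barrier $h$ with the cut-off $\zeta$: the function $h$ must vanish on all of $\partial\Omega\cap B_R$ (not merely at $x_0$) while its augmented Hessian remains in $\Gamma$ there, so that F4 underwrites the sign of $\mathcal{L}\Phi$ in the maximum principle; simultaneously, the cut-off property $D_\beta\zeta = 0$ on $\partial\Omega\cap B_R$ from \eqref{cutoff 2} is essential to keep the boundary behaviour of $\Phi$ clean enough to force its maximum to be at $x_0$. A secondary concern is the $D_p B\cdot D\Phi$ term appearing in $\mathcal{L}\Phi$, for which hypothesis F5$^+$ (when $B$ depends on $p$) is invoked in exact parallel with the global argument of \cite{JT-oblique-I}; the bookkeeping of the $1/R^2$ contributions from $\zeta$ and the final $\epsilon M_2(R)$ absorption by Cauchy's inequality mirror the corresponding steps in the proof of Lemma \ref{Lemma 2.1}.
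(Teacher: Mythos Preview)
Your proposal has a genuine gap at its core. You are trying to run the same interior-maximum argument as in Lemma~\ref{Lemma 2.1}, but that argument relies on the strict regularity of $A$ to produce the favourable quadratic term: in Lemma~\ref{Lemma 2.1} the summand $\frac{c_1}{2}|u_\tau|^2$, together with \eqref{strict regularity c1}, is what generates the $c_0\zeta^2\mathscr{T}|Du_1|^2$ on the right of \eqref{mathcal Lv 1}. In Lemma~\ref{Lemma 2.2} no regularity of $A$ is assumed, and your auxiliary function $\Phi$ contains only the first-order quantity $D_\beta u-\varphi$. Applying $\mathcal{L}$ to $\zeta^2(D_\beta u-\varphi)$ produces second derivatives of $u$ only \emph{linearly} (through cross terms like $F^{ij}D_i\beta_k D_{jk}u$ and the cut-off derivatives); there is no mechanism that yields the claimed control of $\zeta^2\mathscr{T}|u_{\beta\nu}|^2$ at an interior maximum. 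That step simply does not go through, and the subsequent Cauchy absorption into $\epsilon M_2(R)$ has nothing to act on.

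The paper's argument is structurally different: it is a barrier comparison, not an interior-maximum argument, and it does not use the cut-off $\zeta$ at all. One sets
\[
\bar v = \bigl(D_\beta u - \varphi(\cdot,u)\bigr) + \tfrac{a}{2}\,|Du - Du(x_0)|^2
\]
and compares it with $\bar\phi = c(d - td^2) + \tfrac{b}{2}|x-x_0|^2$ on the region $\{B_R\cap\Omega\}^+ = (B_R\cap\Omega)\cap\{G>0\}$. The $|Du-Du(x_0)|^2$ term is doing the real work that your $\zeta$ cannot: (i) under $L$ it contributes $aF^{ij}D_{ik}u\,D_{jk}u$, which absorbs the bad linear-in-$D^2u$ cross terms and gives $L\bar v \ge -[\tfrac{C}{a}+(\epsilon_1M_2(R)+C_{\epsilon_1})]\mathscr{T}$; (ii) on $B_R\cap\partial\Omega$ it is bounded by $C(1+M_2'(R))^2|x-x_0|^2$ via tangential differentiation of $Du$ (this is exactly where $M_2'(R)$ enters, not through an ad hoc coefficient $K(1+M_2'(R))$). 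The uniform $(\Gamma,A,G)$-convexity and F4 give $L\bar\phi \le L\bar v$ in $\{B_R\cap\Omega\}^+$ for suitable $c$, and the boundary comparison $\bar v\le\bar\phi$ on $\partial\{B_R\cap\Omega\}^+$ (including the inner boundary $\partial B_R\cap\Omega$, handled by $\tfrac{b}{2}|x-x_0|^2$ with $b\ge C/R^2$) then forces $\bar v\le\bar\phi$ throughout. Since $\bar v(x_0)=\bar\phi(x_0)=0$, one reads off $D_\beta\bar v(x_0)\le D_\beta\bar\phi(x_0)$, i.e.\ $u_{\beta\beta}(x_0)\le\beta^0 c + C$, and optimising $a$ against $c$ gives \eqref{local pure obl est}.
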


Lemma \ref{Lemma 2.2} is a local version of the pure oblique second order derivative estimate, Lemma 2.2 in \cite{JT-oblique-I}; (see Remark 2.1 there).  In the proof of Lemma 2.2 in \cite{JT-oblique-I}, we employ the barrier argument in a boundary strip $\Omega_\rho:=\{x\in \Omega |\  {\rm dist}(x,\partial\Omega) <\rho\}$, since the uniform $(\Gamma, A, G)$-convexity holds globally on $\partial\Omega$. 
For the local estimate in Lemma \ref{Lemma 2.2} here, since the uniform $(\Gamma, A, G)$-convexity only holds at $x_0\in \partial\Omega$, we need to make the corresponding barrier argument in a small neighbourhood $B_R(x_0)\cap \Omega$ of the boundary point $x_0$.

\begin{proof}[Proof of Lemma \ref{Lemma 2.2}]
Since the proof of this lemma is a modification of that of Lemma 2.2 in \cite{JT-oblique-I},  we only present the key steps and omit much of the calculation details.
	
For $x_0\in \partial\Omega$ and a sufficiently small positive $R$, we assume that $\varphi, \beta$ and $\nu$ in $B_R(x_0)\cap \partial\Omega$ has been smoothly extended to $\bar B_R(x_0)\cap \bar\Omega$. We consider the function
\begin{equation}
\bar v:= [D_\beta u - \varphi(\cdot, u)] + \frac{a}{2} |Du-Du(x_0)|^2, \quad {\rm in} \ B_R(x_0)\cap \Omega,
\end{equation}
where $a\le 1$ is a positive constant.
We define the linearized operator 
\begin{equation}\label{linearized op}
L:=F^{ij} [D_{ij} - A_{ij}^k (\cdot, u, Du)D_k],
\end{equation}
which is the first part of the operator $\mathcal{L}$ defined in \eqref{linearized op full}.
By (2.29) in \cite{JT-oblique-I}, in the case when F5\textsuperscript{+} holds, we can have
\begin{equation}\label{L bar v 3}
L\bar v \ge  - [\frac{C}{a}+(\epsilon_1 M_2(R) + C_{\epsilon_1})]\mathscr{T}, \quad {\rm in} \ B_R\cap \Omega,
\end{equation}
for any $\epsilon_1>0$, where Cauchy's inequality is used in the second inequality, $\mathscr{T}={\rm trace}(F^{ij})$, the constant $C$ depends on $B_R\cap \Omega, A, B, \varphi, \beta$ and $|u|_{1;B_R\cap \Omega}$, and the constant $C_{\epsilon_1}$ depends on $\epsilon_1$, $F$ and $B$. In the case when only F5 holds and $B$ is independent of $p$, the term $(\epsilon_1 M_2(R) + C_{\epsilon_1}) \mathscr{T}$ does not appear on the right hand side of the inequality \eqref{L bar v 3}.

Next, we divide into two cases.

Case (i): $D_\beta G(\cdot, u, Du) \le 0$ at $x_0$. By a direct calculation, we have
\begin{equation}
u_{\beta\beta}(x_0) \le C,
\end{equation}
where $C$ is a constant depending on $B_R\cap \Omega, \varphi, \beta$ and $|u|_{1;B_R\cap \Omega}$. Thus, we have already obtained an upper bound for pure oblique derivative of $u$ at $x_0$.

Case (ii): $D_\beta G(\cdot, u, Du) > 0$ at $x_0$. In this case, in a small neighbourhood of $x_0$, we have $G>0$ in the direction of $\beta$ at $x_0$. Then we need to construct an upper barrier function for $\bar v$ at $x_0$, using the uniform $(\Gamma, A, G)$-convexity of $\partial\Omega$ at $x_0$ with resect to $u$.
As in \cite{JT-oblique-I}, we consider the function
\begin{equation}\label{upper barrier}
\bar \phi = \phi + \frac{b}{2}|x-x_0|^2,
\end{equation}
where $\phi= c(d-td^2)$, $d=d(x)={\rm dist}(x,\partial\Omega)$, $b, c, t$ and $R$ are positive constants to be determined.
Since $\partial\Omega$ is uniformly $(\Gamma, A, G)$-convex at $x_0\in \partial\Omega$ with respect $u$, for sufficiently small $R>0$, there exists a small positive constant $\sigma$, such that
\begin{equation}\label{uniform conv neighb}
K_A[B_R\cap \partial\Omega](x,u,Du) + \mu_0 \nu(x)\otimes \nu(x) -2 \sigma I \in \Gamma,
\end{equation}
for all $x\in B_R\cap\partial\Omega$ satisfying $G(x,u(x),Du(x))\ge 0$. Here we have assumed that $|u|$ and $|Du|$ are bounded. 
Fixing the constants $R$ and $t$ such that $tR\le 1/4$, and using F4 and \eqref{uniform conv neighb}, as (2.36) in \cite{JT-oblique-I}, we have for sufficiently large $c$,
\begin{equation}\label{L barrier 2}
 L \phi \le -\frac{1}{2}c\sigma \mathscr{T}, \quad{\rm in}\ \  \{B_R\cap \Omega\}^+:= (B_R\cap \Omega) \cap \{G(\cdot,u,Du) > 0\}.
\end{equation}
Consequently, we have
\begin{equation}\label{L bar phi}
L\bar \phi \le (-\frac{1}{2}c\sigma + Cb)\mathscr{T}, \quad {\rm in} \ \ \{ B_R\cap \Omega\}^+,
\end{equation}
where the constant $C$ depends on $B_R\cap \Omega, A$ and $|u|_{1;B_R\cap \Omega}$. 
From \eqref{L bar v 3} and \eqref{L bar phi}, we now have
\begin{equation}\label{Lv ge Lphi}
L \bar \phi \le  L \bar v, \quad\quad {\rm in}\  \{B_R\cap \Omega\}^+,
\end{equation}
provided $c\ge 2[C(b+\frac{1}{a}) + (\epsilon_1 M_2(R)+C_{\epsilon_1})]/\sigma$.


Next, we examine $\bar v$ and $\bar \phi$ on the boundary of $\{ B_R\cap \Omega\}^+$. For $x\in B_{R}\cap\partial \Omega$, we have
\begin{equation}\label{compare out bdy}
\begin{array}{ll}
|Du(x)-Du(x_0)|   \!\!&\!\! \displaystyle \le (\sup_{B_R(x_0)\cap\partial\Omega}\sup_{|\tau|=1,\tau\cdot \nu=0}  |D_\tau Du| ) |x-x_0| \\
                \!\!&\!\! \displaystyle \le C (1+M^\prime_2(R))|x-x_0|,
\end{array}
\end{equation}
where the mixed derivative estimate \eqref{tan obl} and the obliqueness \eqref{ob} are used in the last inequality, so the constant $C$ depends on $B_R\cap\Omega, \varphi, \beta, \beta_0$ and $|u|_{1;B_R\cap\Omega}$. 
Then we have from \eqref{compare out bdy}
\begin{equation}\label{boundary inequality 1}
\bar v  \le \frac{a}{2} C [1+ (M_2^\prime(R))^2]|x-x_0|^2 \le \bar \phi, \quad {\rm on} \ B_R\cap \partial\Omega,
\end{equation}
provided $b\ge aC[1+ (M_2^\prime(R))^2]$, for a further constant $C$.
For $x\in B_R\cap\Omega$ and $x^\prime$ the closest point on $B_R\cap\partial \Omega$, we then obtain,
\begin{equation}
\begin{array}{ll}
|Du(x)-Du(x_0)|^2 \!\!&\!\! \displaystyle \le 4(\sup\limits_{B_R\cap \Omega}|Du|)M_2(R)d(x) + 2|Du(x^\prime)-Du(x_0)|^2\\
                \!\!&\!\! \displaystyle \le C [1+(M^\prime_2(R))^2 + M_2(R)] (|x-x_0|^2 +d),
\end{array}
\end{equation}
so that
\begin{equation}\label{boundary inequality 2}
\bar v \le \frac{1}{2}a C [1+(M^\prime_2(R))^2 + M_2(R)](|x-x_0|^2 + d) \le \bar \phi, \quad {\rm on} \ (B_R\cap\Omega)\cap\{G(\cdot,u,Du) = 0\},
\end{equation}
by choosing $b\ge aC [1+(M^\prime_2(R))^2 +M_2(R)]$ and $c \ge b$.
On the inner boundary, we have
\begin{equation}\label{boundary inequality 3}
\begin{array}{rl}
\bar v \le \!\!&\!\! \displaystyle \sup\limits_{B_R\cap\Omega}  \{[D_\beta u - \varphi(\cdot, u)] + \frac{a}{2} |Du-Du(x_0)|^2\} \\
          \le \!\!&\!\! \displaystyle \frac{b}{2}R^2 \le cd(1-tR)+\frac{b}{2}R^2 \le \bar \phi, \quad {\rm on} \ \partial B_R\cap \Omega,
\end{array}
\end{equation}
provided $b\ge \frac{C}{R^2}$, for the constant $C$ depending on $B_R\cap\Omega, \varphi, \beta$ and $|u|_{1;B_R\cap \Omega}$.

Now from \eqref{Lv ge Lphi}, \eqref{boundary inequality 1}, \eqref{boundary inequality 2} and \eqref{boundary inequality 3}, by the comparison principle, we have
\begin{equation}
\bar v \le \bar \phi, \quad {\rm in} \ B_R\cap \Omega.
\end{equation}
Since $\bar v(x_0)= \bar \phi(x_0)=0$, we have $D_\beta \bar v(x_0) \le D_\beta \bar \phi(x_0)$, which implies
\begin{equation}\label{double oblique at x0}
u_{\beta\beta} (x_0) \le \beta^0 c+C,
\end{equation}
where $\beta^0:=\sup\limits_{\partial\Omega}(\beta \cdot \nu)\ge \beta_0$.
In view of the above considerations, we can fix the constant $b=aC[1+(M_2^\prime(R))^2+M_2(R)]+ \frac{C}{R^2}$.
We can further fix the constant $c$ so that
\begin{equation}\label{choice of c}
\begin{array}{ll}
c \!\!&\!\! \displaystyle = \frac{2[C(b+\frac{1}{a})+(\epsilon_1 M_2(R)+C_{\epsilon_1}) ]}{\sigma} + b\\
  \!\!&\!\! \displaystyle \le C [(\epsilon_1+a)M_2(R)+a(M^\prime_2(R))^2 +\frac{1}{a}] + C_{\epsilon_1} +\frac{C^\prime}{R^2},
\end{array}
\end{equation}
where $C$ now depends on $B_R\cap \Omega, F, A, B, \varphi, \beta, \beta_0$ and $|u|_{1;B_R\cap\Omega}$, $C_{\epsilon_1}$
depends additionally on $\epsilon_1$, and $C^\prime$ depends on $B_R\cap \Omega, A, \varphi, \beta$ and $|u|_{1;B_R\cap\Omega}$. 
For any $\epsilon >0$, taking $a=\frac{1}{1+\epsilon_1 M_2(R)}$ and $\epsilon_1= \frac{\epsilon}{\beta^0C}$ for a further constant $C$ in \eqref{choice of c}, from \eqref{double oblique at x0} and \eqref{choice of c} we arrive at the estimate \eqref{local pure obl est}, for both F5\textsuperscript{+} and $B$ independent of $p$.

We have proved this lemma in the case when $R$ is sufficiently small. When $R$ is larger, we can get through by using the same argument at the end of the proof of Lemma \ref{Lemma 2.1}.
\end{proof}

\begin{remark}\label{uniform convexity remark}
In view of \eqref{uniform conv neighb}, when $\partial\Omega$ is uniformly $(\Gamma, A, G)$-convex at $x_0\in \partial\Omega$, with respect to $u$,  there exists a sufficiently small $R>0$ such that $\partial\Omega$ is uniformly $(\Gamma, A, G)$-convex at each $x\in B_{R}(x_0)\cap\partial\Omega$, with respect to $u$. Therefore, the estimate \eqref{local pure obl est} will hold for all the points $x\in B_{R}(x_0)\cap\partial\Omega$. 
\end{remark}

By making full use of the local/global second derivative estimate (1.14) in \cite{JT-oblique-I} together with the local boundary estimates \eqref{tan obl}, \eqref{local pure prime estimate} and \eqref{local pure obl est} in the mixed tangential-oblique, pure tangential and pure oblique directions respectively, we are now able to establish the following local second derivative estimates for the boundary value problem \eqref{1.1}-\eqref{1.5}.

\begin{Theorem}\label{Th2.1}
Let $u\in C^2(\bar \Omega)\cap C^4(\Omega)$ be an admissible solution of equation \eqref{1.1} in a bounded domain $\Omega\subset \mathbb{R}^n$. Assume that $F\in C^2(\Gamma)$ satisfies conditions F1-F3 and F5 in the cone $\Gamma\subset \mathbb{S}^n$, $A\in C^2(\bar \Omega \times \mathbb{R} \times \mathbb{R}^n)$ is strictly regular, $B>a_0, \in C^2(\bar\Omega\times \mathbb{R} \times \mathbb{R}^n)$, either F5\textsuperscript{+} holds or $B$ is convex with respect to $p$. 
For any point $x_0\in \bar\Omega$ and positive constant $R>0$, we have
\begin{equation}\label{est LG}
M_2(\theta R) \le \tilde M_2 + C(1+\frac{1}{R^2}),
\end{equation}
for any constant $0<\theta <1$, where $M_2(\theta R)=\sup\limits_{B_{\theta R}\cap\Omega}|D^2u|$, $\tilde M_2 =\sup\limits_{B_{R}\cap\partial\Omega}|D^2u|$, with $B_R=B_R(x_0)$, and the constant $C$ depends on $\theta, B_R\cap \Omega, \Gamma, F, A, B$ and $|u|_{1;B_R\cap \Omega}$. Assume in addition that $\partial\Omega\in C^{2,1}$, $\Gamma\subset \mathcal{P}_{n-1}$, $G\in C^2(\partial\Omega\times \mathbb{R}\times \mathbb{R}^n)$ is semilinear and oblique satisfying \eqref{1.5} and \eqref{ob} and either F5\textsuperscript{+} holds or $B$ is independent of $p$. If F4 holds and $\partial\Omega$ is uniformly $(\Gamma, A, G)$-convex at $x_0\in \partial\Omega$, with respect to $u$, then there exists a sufficiently small $R>0$, such that
\begin{equation}\label{est int bdy}
M_2(\theta R) \le C(1+\frac{1}{R^2}),
\end{equation}
for any constant $0<\theta <1$, where the constant $C$ depends on $\theta, B_R\cap \Omega, F, A, B, \varphi, \beta, \beta_0$ and $|u|_{1;B_R\cap \Omega}$.
\end{Theorem}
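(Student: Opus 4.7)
The plan has two stages, mirroring the statement. First I would prove the interior--to--boundary reduction \eqref{est LG}, which localises the global second derivative estimate (1.14) in Theorem~1.1 of \cite{JT-oblique-I}. Then I would combine it with the three boundary estimates already in hand—the mixed tangential--oblique bound \eqref{tan obl}, the pure tangential bound \eqref{local pure prime estimate} from Lemma \ref{Lemma 2.1}, and the pure oblique bound \eqref{local pure obl est} from Lemma \ref{Lemma 2.2}—to obtain \eqref{est int bdy}, absorbing the small-$\epsilon$ self-referential terms through a standard iteration lemma.

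For \eqref{est LG}, the plan is to run the same auxiliary-function argument as in the proof of (1.14) in \cite{JT-oblique-I}, but with a radial cut-off $\eta\in C^2_c(B_R)$ satisfying $\eta\equiv 1$ on $B_{\theta R}$, $|D\eta|\le C(\theta)/R$, $|D^2\eta|\le C(\theta)/R^2$ in place of a global cut-off. One considers
\[
w(x,\xi)=\eta^2(x)\bigl[u_{\xi\xi}(x)+\tfrac{c_1}{2}u_\xi^2(x)\bigr]
\]
for unit vectors $\xi\in\mathbb{R}^n$, where $c_1$ is the strict regularity constant from \eqref{strict regularity c1}, and examines its maximum over $\bar B_R\cap\bar\Omega$ and the unit sphere. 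If this maximum is attained on $B_R\cap\partial\Omega$ then the estimate at that point yields the $\tilde M_2$ term directly; otherwise it is attained at an interior point $y_0\in B_R\cap\Omega$, where $\eta(y_0)>0$, the inequality $\mathcal{L}w(y_0)\le 0$ holds, and the differentiated equation \eqref{1.1} together with concavity F2, strict regularity \eqref{strict regularity c1}, the dichotomy F5\textsuperscript{+}~vs.~convexity of $B$ in $p$, and the cut-off bounds give
\[
\eta^2(y_0)\,|D^2u(y_0)|\le C(\theta)\bigl(1+R^{-2}\bigr),
\]
exactly as in the proof of (1.14). This produces \eqref{est LG}. Note that we are free to take $x_0\in\bar\Omega$, and when $B_R(x_0)\subset\Omega$ the term $\tilde M_2$ is simply vacuous, recovering the usual interior estimate.

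For \eqref{est int bdy}, we first control $\tilde M_2$. By Remark \ref{uniform convexity remark} the uniform $(\Gamma,A,G)$-convexity persists on $B_{R}(x_0)\cap\partial\Omega$ for sufficiently small $R$, so Lemmas \ref{Lemma 2.1} and \ref{Lemma 2.2} apply at every point of $B_R\cap\partial\Omega$. Decomposing $D^2u$ at any boundary point into tangential--tangential, tangential--oblique, and oblique--oblique components, and using \eqref{tan obl} for the mixed entries, \eqref{local pure prime estimate} (which requires $\Gamma\subset\mathcal{P}_{n-1}$) for the tangential ones, and \eqref{local pure obl est} for the oblique one, one obtains
\[
M_2^\prime(R')\le \epsilon\,M_2(R)+C_\epsilon\bigl(1+R^{-2}\bigr),
\]
which in turn allows us to absorb the $M_2^\prime(R)$ on the right of \eqref{local pure obl est}, yielding
\[
\tilde M_2(R')\le \epsilon\,M_2(R)+C_\epsilon\bigl(1+R^{-2}\bigr),
\]
for any $\epsilon>0$, after possibly shrinking $R$ once.

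Finally I would combine this with \eqref{est LG} applied on pairs of radii $\rho<R'<R$ with $\theta$ chosen so that $\theta R'=\rho$. The result is a recursive inequality of the form
\[
M_2(\rho)\le \epsilon\,M_2(R)+\frac{C_\epsilon}{(R-\rho)^2}+C_\epsilon
\]
valid for all $0<\rho<R<R_0$, with $\epsilon>0$ at our disposal. A standard iteration lemma (e.g.\ the one used at the end of the proofs of Lemmas \ref{Lemma 2.1} and \ref{Lemma 2.2}) then absorbs $M_2$ on the right-hand side and delivers \eqref{est int bdy}. The main obstacle is essentially the careful bookkeeping in stage one: the cut-off $\eta$ must be compatible with the boundary condition argument so that a maximum on $B_R\cap\partial\Omega$ is cleanly dominated by $\tilde M_2$ while the derivatives of $\eta$ give only the acceptable $R^{-2}$ contributions when the maximum is interior.
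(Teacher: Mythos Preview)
Your plan is sound and would produce the estimate, but the closing step for \eqref{est int bdy} differs from the paper's. For \eqref{est LG} you and the paper agree: localize the argument behind (1.14) in \cite{JT-oblique-I} with a radial cutoff supported in $B_R$ and equal to $1$ on $B_{\theta R}$. For \eqref{est int bdy}, however, the paper does not iterate. Instead it introduces the weighted quantity $M_2^*(R)=\sup_{B_R\cap\Omega}d_x^2|D^2u(x)|$, with $d_x=\mathrm{dist}(x,\partial B_R)$, and splits into two cases according to whether the maximizing point $\bar x$ lies far from or close to $\partial\Omega$. In the first case the interior estimate bounds $M_2^*$ directly; in the second one projects $\bar x$ to $\bar x'\in\partial\Omega$ and applies \eqref{est LG} and Lemmas \ref{Lemma 2.1}--\ref{Lemma 2.2} on a nested family of balls $B_{\delta R/4}(\bar x')\subset B_{\delta R/2}(\bar x')\subset B_{3\delta R/4}(\bar x')\subset B_{13\delta R/16}(\bar x')\subset B_R(x_0)$, with $\delta=d_{\bar x}/R$, so the scale adapts automatically and the $\epsilon$-term is absorbed in one shot against the lower bound $M_2^*(R)\ge[(1-\theta)R]^2M_2(\theta R)$. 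Your absorption-via-iteration route is equally legitimate; the relevant lemma is the Giaquinta-type absorption lemma (if $f(\rho)\le \epsilon f(R)+C_\epsilon(R-\rho)^{-2}+C_\epsilon$ for all $\rho<R\le R_0$ with $\epsilon$ small, then $f(\rho)\le C(R_0-\rho)^{-2}+C$), not the large-$R$ remark at the ends of Lemmas \ref{Lemma 2.1}--\ref{Lemma 2.2}, which is merely monotonicity in $R$. One small caution for your route: when you bound $\tilde M_2(R')$ by applying the boundary lemmas at each $y\in B_{R'}\cap\partial\Omega$ with balls of radius $R-R'$, the term $M_2'$ on the right of \eqref{local pure obl est} is taken over $B_{R-R'}(y)\cap\partial\Omega$, which can reach essentially to $\partial B_R(x_0)$; you therefore need an intermediate radius (take the first ball of radius a fixed fraction of $R-R'$) so that the subsequent application of Lemma \ref{Lemma 2.1} still sits inside $B_R(x_0)$. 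This nested-radius bookkeeping is exactly what the paper's explicit fractions are recording.
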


\begin{proof}
Under the assumptions for \eqref{est LG}, the local/global estimate (1.14) in Theorem 1.1 in \cite{JT-oblique-I} holds, namely
\begin{equation}\label{LG}
\sup_{\Omega\cap \Omega^\prime} |D^2u| \le \sup_{\partial\Omega\cap \Omega_0} |D^2u| + C,
\end{equation}
for any domains $\Omega^\prime \subset\subset \Omega_0 \subset \mathbb{R}^n$.
If we choose $\Omega_0=B_R(x_0)$, $\Omega^\prime = B_{\theta R}(x_0)$, then the estimate \eqref{est LG} is a direct consequence of \eqref{LG}, where the term $C(1+\frac{1}{R^2})$ is from the differentiation of the cut-off function $\zeta$. In particular we may choose a cut-off function $\zeta \in C_0^2(B_R(x_0))$, such that $0\le \zeta \le 1$ in $B_R(x_0)$, $\zeta =1$ in $B_{\theta R}(x_0)$, and  $|D\zeta|^2 + |D^2\zeta| \le \frac{C}{(1-\theta)^2R^2}$ for a universal constant $C$.  

From \eqref{est LG}, if $B_R(x_0)\subset \Omega$, we get an interior second derivative estimate
\begin{equation}\label{int est in Th2.1}
M_2(\theta R) \le C(1+\frac{1}{R^2}),
\end{equation} 
for any $0<\theta <1$, since $M_2^\prime (R) = 0$. 
Next, when $\Gamma\subset \mathcal{P}_{n-1}$, F4 holds and $\partial\Omega$ is uniformly $(\Gamma, A, G)$-convex at $x_0\in \partial\Omega$, with respect to $u$, we take full advantage of the estimates \eqref{tan obl}, \eqref{local pure prime estimate}, \eqref{local pure obl est} and \eqref{est LG} to derive the desired estimate \eqref{est int bdy}.
From Remark \ref{uniform convexity remark}, there exists a small $R>0$ such that $\partial\Omega$ is uniformly $(\Gamma, A, G)$-convex at each $x\in B_{R}(x_0)\cap\partial\Omega$, with respect to $u$.
Set 
\begin{equation}
M_2^*(R) = \sup_{B_R(x_0) \cap \Omega} (d_x^2 |D^2u(x)|),
\end{equation}
where $d_x:={\rm dist}(x, \partial B_R(x_0))$. In order to get the estimate for $M_2(\theta R)$ in \eqref{est int bdy}, we first obtain an estimate for $M_2^*(R)$ .
We assume that $M_2^*(R)$ is attained at a point $\bar x \in B_R(x_0)\cap \bar \Omega$, namely
\begin{equation}
M_2^*(R) = d_{\bar x}^2 |D^2u (\bar x)| = (\delta R)^2|D^2u (\bar x)|,
\end{equation}
where $\delta$ is a constant in $(0, 1]$ such that $\delta={d_{\bar x}}/{R}$. We divide into two cases according to the positions of $\bar x$.

Case (i): $d(\bar x) \ge \frac{d_{\bar x}}{8}$, where $d(\bar x):={\rm dist}(\bar x, \partial\Omega)$. In this case, we have $B_{\frac{\delta R}{16}}(\bar x) \subset B_{\frac{\delta R}{8}}(\bar x) \subset B_R(x_0) \cap \Omega$. Then we have
\begin{equation}\label{up bd M* int}
\begin{array}{rl}
M_2^*(R) = \!\!&\!\!\displaystyle \sup\limits_{B_{\frac{\delta R}{16}}(\bar x)} (d_x^2 |D^2u(x)|) \le (\frac{17}{16}\delta R)^2\sup\limits_{B_{\frac{\delta R}{16}}(\bar x)} |D^2u| \\
              \le \!\!&\!\!\displaystyle (\frac{17}{16}\delta R)^2 C (1+\frac{1}{(\delta R/ 8)^2}),
\end{array}
\end{equation}
where the interior estimate \eqref{int est in Th2.1} with $\theta=1/2$ is used in the last inequality. On the other hand, we have
\begin{equation}\label{lw bd M* int}
M_2^*(R) \ge [(1-\theta)R]^2 M_2(\theta R),
\end{equation}
for any $0<\theta <1$. Combining \eqref{up bd M* int} and \eqref{lw bd M* int}, we obtain the desired estimate \eqref{est int bdy}.

Case (ii): $d(\bar x) < \frac{d_{\bar x}}{8}$. Let $\bar x^\prime$ be the nearest point of $\bar x$ to $\partial\Omega$. (Note that if $\bar x\in \partial\Omega$, then $\bar x^\prime =\bar x$ and $d(\bar x)=0$.) Since $d_{\bar x}=\delta R$, $d(\bar x)<\frac{\delta R}{8}$, we have $\frac{7}{8}\delta R<d_{\bar x^\prime}<\frac{9}{8}\delta R$ and $\frac{5}{8}\delta R<{\rm dist} (B_{\frac{\delta R}{4}}(\bar x^\prime)\cap \Omega, \partial B_R(x_0))< \frac{11}{8}\delta R$. Then we have $\bar x \in B_{2d(\bar x)}(\bar x^\prime)\subset B_{\frac{\delta R}{4}}(\bar x^\prime) \subset B_R(x_0)$, and
\begin{equation}\label{up bd M* bdy}
\begin{array}{rl}
M_2^*(R) = \!\!&\!\!\displaystyle \sup\limits_{B_{\frac{\delta R}{4}}(\bar x^\prime)\cap \Omega} (d_x^2 |D^2u(x)|) \le (\frac{11}{8}\delta R)^2\sup\limits_{B_{\frac{\delta R}{4}}(\bar x^\prime)\cap \Omega} |D^2u| \\
              \le \!\!&\!\!\displaystyle (\frac{11}{8}\delta R)^2 [\sup\limits_{B_{\frac{\delta R}{2}}(\bar x^\prime)\cap \partial\Omega} |D^2u|+ C (1+ \frac{1}{(\delta R/2)^2})],
\end{array}
\end{equation}
where the estimate \eqref{est LG} is used with $\theta=1/2$ and $B_R(x_0)$ replaced by $B_{\frac{\delta R}{4}}(\bar x^\prime)$. Next, we need to derive an estimate for $\sup\limits_{B_{\frac{\delta R}{2}}(\bar x^\prime)\cap \partial\Omega} |D^2u|$, using Lemmas \ref{Lemma 2.1} and \ref{Lemma 2.2}. Since ${\rm dist} (B_{\frac{\delta R}{2}}(\bar x^\prime)\cap \Omega, \partial B_R(x_0))> \frac{3}{8}\delta R$, then for any point $y\in B_{\frac{\delta R}{2}}(\bar x^\prime)\cap \partial\Omega$, we have $B_{\frac{\delta R}{4}}(y) \subset B_R(x_0)$ and ${\rm dist} (B_{\frac{\delta R}{4}}(y)\cap \Omega, \partial B_R(x_0))> \frac{1}{8}\delta R$. Applying estimate \eqref{local pure obl est} in Lemma \ref{Lemma 2.2} to the ball $B_{\frac{\delta R}{4}}(y)$, we have
\begin{equation}\label{apply pure obl}
u_{\beta\beta} (y) \le \epsilon \sup\limits_{B_{\frac{\delta R}{4}}(y)\cap \Omega}|D^2u| + C_\epsilon (1+ \sup\limits_{B_{\frac{\delta R}{4}}(y)\cap \partial\Omega}\sup\limits_{|\tau|=1,\tau\cdot \nu=0} |u_{\tau\tau}|) + \frac{C}{(\delta R/ 4)^2},
\end{equation}
for any $y\in B_{\frac{\delta R}{2}}(\bar x^\prime)\cap \partial\Omega$, and $\epsilon >0$. Since $B_{\frac{\delta R}{4}}(y) \subset B_{\frac{3\delta R}{4}}(\bar x^\prime)$ for all $y\in B_{\frac{\delta R}{2}}(\bar x^\prime)\cap \partial\Omega$, we have from \eqref{apply pure obl},
\begin{equation}\label{apply pure obl 1}
\sup\limits_{B_{\frac{\delta R}{2}}(\bar x^\prime)\cap \partial\Omega} u_{\beta\beta} \le \epsilon \sup\limits_{B_{\frac{3\delta R}{4}}(\bar x^\prime)\cap \Omega}|D^2u| + C_\epsilon (1+ \sup\limits_{B_{\frac{3\delta R}{4}}(\bar x^\prime)\cap \partial\Omega}\sup\limits_{|\tau|=1,\tau\cdot \nu=0} |u_{\tau\tau}|) + \frac{C}{(\delta R/ 4)^2},
\end{equation}
for any $\epsilon>0$, where $B_{\frac{3\delta R}{4}}(\bar x^\prime)\subset B_R(x_0)$, and ${\rm dist}(B_{\frac{3\delta R}{4}}(\bar x^\prime)\cap \Omega, \partial B_R(x_0))> \frac{1}{8}\delta R$.
For any point $\tilde y\in B_{\frac{3\delta R}{4}}(\bar x^\prime)\cap \partial\Omega$, we have $B_{\frac{\delta R}{16}}(\tilde y)\subset B_R(x_0)$, and ${\rm dist} (B_{\frac{3\delta R}{4}}(\bar x^\prime)\cap\Omega, \partial B_{R}(x_0))>\frac{1}{16}\delta R$.
Applying estimate \eqref{local pure prime estimate} in Lemma \ref{Lemma 2.1} in the ball $B_{\frac{\delta R}{16}}(\tilde y)$, we have
\begin{equation}\label{apply pure tan}
\sup\limits_{|\tau|=1,\tau\cdot \nu=0} |u_{\tau\tau}(\tilde y)| \le \epsilon \sup\limits_{B_{\frac{\delta R}{16}}(\tilde y)\cap\Omega} |D^2u| + C_\epsilon (1+ \frac{1}{(\delta R/16)^2}),
\end{equation}
for any $\tilde y\in B_{\frac{3\delta R}{4}}(\bar x^\prime)\cap \partial\Omega$, and $\epsilon>0$. Since $B_{\frac{\delta R}{16}}(\tilde y) \subset B_{\frac{13\delta R}{16}}(\bar x^\prime)$ for all $\tilde y\in B_{\frac{3\delta R}{4}}(\bar x^\prime)\cap \partial\Omega$, we have from \eqref{apply pure tan},
\begin{equation}\label{apply pure tan 1}
\sup\limits_{B_{\frac{3\delta R}{4}}(\bar x^\prime)\cap \partial\Omega}\sup\limits_{|\tau|=1,\tau\cdot \nu=0} |u_{\tau\tau}| \le \epsilon \sup\limits_{B_{\frac{13\delta R}{16}}(\bar x^\prime)\cap\Omega} |D^2u| + C_\epsilon (1+ \frac{1}{(\delta R/16)^2}),
\end{equation}
for any $\epsilon>0$, where $B_{\frac{13\delta R}{16}}(\bar x^\prime)\subset B_R(x_0)$ and ${\rm dist} (B_{\frac{13\delta R}{16}}(\bar x^\prime)\cap\Omega, \partial B_R(x_0))>\frac{\delta R}{16}$. Plugging \eqref{apply pure tan 1} into \eqref{apply pure obl 1}, we obtain
\begin{equation}\label{apply pure obl 2}
\sup\limits_{B_{\frac{\delta R}{2}}(\bar x^\prime)\cap \partial\Omega} u_{\beta\beta} \le \epsilon \sup\limits_{B_{\frac{13\delta R}{16}}(\bar x^\prime)\cap \Omega}|D^2u| + C_\epsilon (1+ \frac{1}{(\delta R)^2} ),
\end{equation}
for any $\epsilon >0$ and a further constant $C_\epsilon$. From \eqref{apply pure tan 1}, \eqref{apply pure obl 2} and \eqref{tan obl}, we have
\begin{equation}\label{up full bd}
\sup\limits_{B_{\frac{\delta R}{2}}(\bar x^\prime)\cap \partial\Omega} u_{\xi\xi} \le \epsilon \sup\limits_{B_{\frac{13\delta R}{16}}(\bar x^\prime)\cap \Omega}|D^2u| + C_\epsilon (1+ \frac{1}{(\delta R)^2} ),
\end{equation}
for any unit vector $\xi$, and any $\epsilon>0$. 
Using the concavity condition F2 or $\Gamma\subset \mathcal{P}_{n-1}$ as in \cite{JT-oblique-I}, from the upper bound in \eqref{up full bd}, we have
\begin{equation}\label{up lw full bd}
\sup\limits_{B_{\frac{\delta R}{2}}(\bar x^\prime)\cap \partial\Omega} |D^2u| \le \epsilon \sup\limits_{B_{\frac{13\delta R}{16}}(\bar x^\prime)\cap \Omega}|D^2u| + C_\epsilon (1+ \frac{1}{(\delta R)^2} ),
\end{equation}
for any $\epsilon >0$. Since ${\rm dist}(B_{\frac{13\delta R}{16}}(\bar x^\prime)\cap \Omega, \partial B_R(x_0))> \frac{\delta R}{16}$, we have
\begin{equation}\label{theta relation}
B_{\frac{13\delta R}{16}}(\bar x^\prime) \subset B_{\theta R}(x_0),
\end{equation}
for $\theta \in (1-\frac{\delta}{16}, 1)$. Plugging \eqref{up lw full bd}, \eqref{theta relation} into \eqref{up bd M* bdy}, we obtain
\begin{equation}\label{up bd M* bdy 1}
M_2^*(R) \le (\frac{11}{8}\delta R)^2 [\epsilon M_2(\theta R) + C_\epsilon (1+ \frac{1}{(\delta R)^2})],
\end{equation}
for any $\epsilon>0$, and $\theta \in (1-\frac{\delta}{16}, 1)$, and a further constant $C_\epsilon$. Combining \eqref{lw bd M* int} and \eqref{up bd M* bdy 1}, and choosing a sufficiently small $\epsilon>0$, we obtain the estimate \eqref{est int bdy} for $\theta \in (1-\frac{\delta}{16}, 1)$. Since $M_2(\theta R)$ is nondecreasing in $\theta$,  \eqref{est int bdy}  holds for any $\theta \in (0,1)$. 

From the above two cases (i) and (ii), we have thus completed the proof of the estimate \eqref{est int bdy}.
\end{proof}

\begin{remark}\label{Rem 2.2}
Note that by plugging \eqref{est int bdy} into \eqref{up bd M* bdy 1}, we  thereby obtain the estimate
\begin{equation}\label{weighted norm est}
M_2^* (R) \le C(1+R^2), 
\end{equation}
for a further constant $C$ in case (ii). Actually, \eqref{up bd M* int} gives the estimate \eqref{weighted norm est} in case (i). Therefore, the estimate \eqref{weighted norm est} holds under the assumptions of Theorem \ref{Th2.1}.
\end{remark}

\begin{remark}\label{Rem 2.3}
In Theorem \ref{Th2.1}, the local estimate \eqref{est int bdy} is obtained in a small ball $B_R(x_0)$ with centre $x_0\in \partial\Omega$. In fact, in any small ball $B_{\bar R}(y)$  with $B_{\bar R}(y)\cap \partial\Omega\subseteq B_R(x_0)\cap\partial\Omega$,  \eqref{est int bdy} still holds with $M_2(\theta R)$ and $R$ replaced by $\sup\limits_{B_{\theta \bar R}(y)\cap \Omega}|D^2u|$ and $\bar R$ respectively.
\end{remark}

\begin{remark}\label{Rem 2.4}
For the local estimates in this section, F1 and the condition $B > a_0$ can be replaced by the weaker assumptions F1$^-$ and $B\ge a_0$ if $a_0 > -\infty$ and F5$^+$ is strengthened to F5$^+$(0), but it is not feasible to consider solutions with smooth second derivatives in these degenerate cases. In later sections, we can directly use these local second derivative estimates, when considering the smooth solutions of the regularized problem under F1$^-$ or the weak inequality $B \ge a_0$.
\end{remark}

\begin{remark}\label{Rem 2.5}
Following Remark \ref{Rem 1.4}, we can replace the cone $\Gamma$ in the estimates of this section by any convex open set $D\subset \mathbb{S}^n$, $\ne \mathbb{S}^n$, with $0\in\partial D$, which is closed under addition of the positive cone, $K^+$, provided its asymptotic cone $\Gamma$ is used in our hypotheses of ($\Gamma, A, G$) -convexity and in condition F4. Note that in Lemma \ref{Lemma 2.2} and Theorem \ref{Th2.1}, F4 is always assumed in company with the uniform $(\Gamma, A, G)$-convexity of a boundary point and is automatically satisfied if $F$ satisfies F1$^-$, F2 and F3 when $a_0>-\infty$, while F5(0) is satisfied if $D = \Gamma$. For general $D$ we thus have to also strengthen both F5 to F5(0) and  F5$^+$  to F5$^+$(0) in the degenerate case $B\ge a_0$ for the validity of Lemmas \ref{Lemma 2.1}, \ref{Lemma 2.2} and Theorem \ref{Th2.1}. 
\end{remark}

\vspace{3mm}

\section{Existence}\label{Section 3}

\vskip10pt

In this section, following \cite{JT-oblique-I}, we introduce a uniformly elliptic regularization of the oblique derivative problem \eqref{1.1}-\eqref{1.5} and prove its classical solvability. By using the regularized problems for approximations, we prove in Theorem \ref{Th3.1} the existence of solutions in $C^{1,1}(\Omega)\cap C^{0,1}(\bar\Omega)$,  of the original oblique derivative problem \eqref{1.1}-\eqref{1.5} under F1$^-$, which lie in  $C^{2,\alpha}(\Omega)$ for some $\alpha\in (0,1)$ in the nondegenerate case when F1 holds and $B> a_0$. Note that we only assume that the domain $\Omega$ is a bounded open set in $\mathbb{R}^n$ without any convexity assumptions on the boundary. At the end of this section, we also consider alternative hypotheses for the gradient estimates, which can lead to more general versions of the existence results in Theorems \ref{Th1.1} and Theorem \ref{Th3.1}.

We first recall the definition of uniform ellipticity of fully nonlinear elliptic equations from \cite{LieTru1986}. Letting $\lambda$, $\Lambda$ denote the minimum and maximum eigenvalues of $F_r$, we  call $F$ uniformly elliptic with respect to some subset $\mathcal{U}\subset \Gamma$, if the ratio $\Lambda/ \lambda$ is bounded on $\mathcal{U}$. Following \cite{JT-oblique-I}, our regularization is achieved by adding 
\begin{equation}
\epsilon F_1(M[u]) I
\end{equation}
to the augmented Hessian matrix $M[u]$, where $\epsilon> 0$ and $F_1(M[u])={\rm trace}(M[u])$.
For the operator $F$ and cone $\Gamma$, we define for a constant $\epsilon> 0$, $F_1(r)={\rm trace}(r)$, the approximating operators and approximating cones,
\begin{equation}\label{uni reg}
F^\epsilon (r) = F(r+\epsilon F_1(r)I), \quad \Gamma^\epsilon = \{ r+\epsilon F_1(r)I | \  r\in \Gamma \}.
\end{equation}
Assuming the cone $\Gamma\subset \Gamma_1$ without loss of generality, we have $F_1(r)>0$ for $r\in \Gamma$. Then $r+\epsilon F_1(r)I\in \Gamma$ for $\epsilon> 0$, if $r\in \Gamma$. Thus, we have 
\begin{equation}
\Gamma^\epsilon \subset \Gamma,
\end{equation}
for $\epsilon > 0$. By a calculation, we have $F_r^\epsilon = F_r + \epsilon \mathscr{T} I$, where $\mathscr{T}=\mathscr{T}(r)$. Therefore, we have
\begin{equation}\label{uniform ellipticity}
\epsilon \mathscr{T} I \le F_r^\epsilon(r) \le (1+\epsilon) \mathscr{T} I.
\end{equation}
The ratio of the maximum and minimum eigenvalues of $F_r^\epsilon$ is $1+\frac{1}{\epsilon}$, which is bounded for any fixed positive constant $\epsilon$. Then the operator $F^\epsilon$ is uniformly elliptic with respect to $\Gamma$ for any fixed positive constant $\epsilon$. Moreover, as in Section 4.3 in \cite{JT-oblique-I}, we know that if $F$ satisfies any of the conditions F1 to F5 (F5\textsuperscript{+}), then $F^\epsilon$ satisfies the same conditions as $F$ with further constants independent of $\epsilon$.

Now, we introduce the regularized problem of the oblique derivative problem \eqref{1.1}-\eqref{1.5},
\begin{equation}\label{re-problem}
\left \{
\begin{array}{rll}
\mathcal{F}^\epsilon [u] \!\!&\!\! = B(\cdot, u, Du), \!\!&\!\! \quad {\rm in} \ \Omega,\\
\mathcal{G}[u] \!\!&\!\! =0, \!\!&\!\! \quad {\rm on} \ \partial\Omega,
\end{array}
\right.
\end{equation}
for $\epsilon >0$, where $\mathcal{F}^\epsilon [u]:= F^\epsilon(M[u])$, $\mathcal{G}[u]=\beta\cdot Du - \varphi(\cdot, u)$. We will first study the existence result for the regularized problem \eqref{re-problem} for $\epsilon>0$, and then send $\epsilon$ to zero to get the existence result for the original problem \eqref{1.1}-\eqref{1.5}.


In order to establish the existence of the classical admissible solutions for the regularized problem \eqref{re-problem}, we need to obtain the {\it a priori} solution estimates and derivative estimates for the solution $u_{\epsilon}$ of \eqref{re-problem}. In the estimates discussed in this part, we mainly focus on the dependence on the constant $\epsilon$ for various derivative bounds. We shall discuss these estimates under the assumptions in Theorem \ref{Th1.1}(i), unless otherwise specified. Note that under F1$^-$, the second derivative estimate and the gradient estimate in \cite{JT-oblique-I} are still valid, (see the paragraph above Corollary 4.1 in \cite{JT-oblique-I}).

When $A, B$ and $\varphi$ are nondecreasing in $z$, with either $A$ or $B$ strictly increasing, as in (1.27) in \cite{JT-oblique-I}, the function 
\begin{equation}\
\bar u := c_0 + c_1\psi
\end{equation}
will be a supersolution of \eqref{1.1}-\eqref{1.5}, where $c_0$ and $c_1$ are sufficiently large constants, $\psi\in C^2(\bar \Omega)$ is a defining function for $\Omega$ satisfying $\psi<0$ in $\Omega$, $\psi=0$ and $D\psi \neq 0$ on $\partial\Omega$. When only $\varphi$ is strictly increasing in $z$, the function 
\begin{equation}
\bar u + c_2 
\end{equation}
will be a supersolution of \eqref{1.1}-\eqref{1.5}, where $\bar u$ is a supersolution of \eqref{1.1} as assumed in Theorem \ref{Th1.1}, $c_2$ is a sufficiently large positive constant. For convenience, we shall use $\bar u$ to denote the supersolution of \eqref{1.1}-\eqref{1.5} in all the above cases.
Since we already have a strict subsolution $\underline u\in  C^{1,1}(\bar \Omega)$ and a supersolution $\bar u \in  C^{1,1}(\bar \Omega)$ of \eqref{1.1}-\eqref{1.5} in hand, following the proof of Corollary 4.1 in \cite{JT-oblique-I}, we obtain uniform  lower and upper bounds of $u_\epsilon$,
\begin{equation}\label{uni sol}
\underline u \le u_\epsilon \le \bar u, \quad {\rm in} \  \bar \Omega, 
\end{equation}
for sufficiently small $\epsilon>0$. Under F1$^-$, F2, F3 and F5($\infty$), since $A$ is uniformly regular in $\Omega$ and the quadratic growth conditions \eqref{quadratic growth} hold, from Theorem 1.3 or Lemma 3.1 in \cite{JT-oblique-I}, we have the uniform gradient estimate of $u_\epsilon$,
\begin{equation}\label{uni gra}
\sup_{\Omega}|Du_\epsilon| \le C,
\end{equation}
where $C$ is a constant depending on $F, A, B, \Omega, \beta, \varphi$ and $|u_\epsilon|_{0,\Omega}$, and is independent of $\epsilon$. Since when $a_0=0$, F1$^-$, F2 and F3 imply F5(0), so that the constant $C$ in \eqref{uni gra} does not depend on $b_0:=\inf_{\Omega}B$ and the estimate \eqref{uni gra} thus holds for $B\ge 0$. Then using the uniform ellipticity together with the estimates \eqref{uni sol} and \eqref{uni gra}, we obtain the second derivative estimate of $u_{\epsilon}$ by Theorem 5.4 in \cite{LieTru1986}, 
\begin{equation}\label{2nd bd epsilon}
\sup_{\Omega} |D^2 u_\epsilon| \le C_\epsilon,
\end{equation}
where $C_\epsilon$ is a constant depending on $\epsilon$ and other known data. Once we have the full second order derivative bound \eqref{2nd bd epsilon}, we can use the uniformly elliptic theory as in Theorem 3.2 in \cite{LT1986} or Theorem 1.1 in \cite{LieTru1986} to derive the global second order derivative H{\" o}lder estimate
\begin{equation}\label{2nd Holder bd}
|u_\epsilon|_{2,\gamma; \Omega} \le C_\epsilon,
\end{equation}
for any $\gamma\in (0,1)$, where the constant $C_\epsilon$ depends on $\epsilon$ and other known data. 

With the $C^{2,\gamma}$ estimate \eqref{2nd Holder bd}, we can use the method of continuity as in Theorem 17.28 in \cite{GTbook}  or Corollary 1.2 in \cite{LieTru1986} to get the existence of a unique admissible solution $u_\epsilon \in C^{2,\gamma}(\bar \Omega)$, for any $\gamma\in (0,1)$ and any small constant $\epsilon>0$.

\begin{remark}\label{Rem 3.1}
The second derivative estimate \eqref{2nd bd epsilon} for the regularized problem \eqref{re-problem} is obtained directly from the uniform elliptic theory in \cite{LieTru1986}. Under the assumptions of Theorem \ref{Th1.1}, we can also use the estimates in \cite{JT-oblique-I} to derive the second derivative estimate \eqref{2nd bd epsilon}, by combining the local/global estimate in Theorem 1.1 in \cite{JT-oblique-I}, mixed tangential-oblique boundary estimate (2.22) in \cite{JT-oblique-I}, pure tangential boundary estimate in Lemma 2.3 in \cite{JT-oblique-I} and the pure normal boundary estimate from the uniform ellipticity. Note that for the estimates of $u_\epsilon$, we need to replace the linearized operators $\mathcal L$ in \eqref{linearized op full} and $L$ in \eqref{linearized op} by
\begin{equation}
\mathcal{L}^\epsilon : = L^\epsilon - D_{p_k}B(\cdot, u_\epsilon, Du_\epsilon) D_k,
\end{equation}
and
\begin{equation}
L^\epsilon := (F^\epsilon)^{ij} [(D_{ij}-A_{ij}^k (\cdot, u_\epsilon, Du_\epsilon) D_k) + \epsilon \delta_{ij} \sum_{l=1}^n (D_{ll}-A_{ll}^k(\cdot, u_\epsilon, Du_\epsilon)D_k)] ,
\end{equation}
respectively, where $(F^\epsilon)^{ij} = \frac{\partial F^\epsilon}{\partial r_{ij}}$.
\end{remark}

By letting $\epsilon \rightarrow 0$ in \eqref{re-problem}, we obtain the existence result for the boundary value problem \eqref{1.1}-\eqref{1.5}. We cover both the F1$^-$ case and the F1 case in the following theorem.

\begin{Theorem}\label{Th3.1}
Assume that $F$ satisfies conditions F1$^-$ (F1), F2 and F3, with $a_0 = 0$,  in $\Gamma$, $\Omega$ is a $C^{2,1}$ bounded domain in $\mathbb{R}^n$, $A\in C^2(\bar \Omega\times \mathbb{R}\times \mathbb{R}^n)$ is uniformly regular in $\Omega$, $B \ge 0, (>0), \in C^2(\bar \Omega\times \mathbb{R}\times \mathbb{R}^n)$, 
$\mathcal G$ is semilinear and oblique with $\beta\in C^{1,1}(\partial\Omega)$, $\varphi \in C^{1,1}(\partial\Omega\times \mathbb{R})$ and there exist a strict (non-strict) subsolution $\underline u\in  C^{1,1}(\bar \Omega)$ of \eqref{1.1}-\eqref{1.5} and a supersolution $\bar u \in  C^{1,1}(\bar \Omega)$ of \eqref{1.1}. Assume also that $A$ and $B$ satisfy the quadratic growth conditions \eqref{quadratic growth}, $A$, $B$ and $\varphi$ are nondecreasing with respect to $z$, with one of them strictly increasing and either (a) B is independent of $p$  or (b) $B$ is convex in $p$ and $F$ satisfies F5\textsuperscript{+}(0) (F5\textsuperscript{+}) and F5($\infty$). 
Then there exists an admissible solution $u\in C^{1,1}(\Omega)\cap C^{0,1}(\bar\Omega)$, ($u \in C^{2, \alpha}(\Omega)\cap C^{0,1}(\bar \Omega)$ for some $\alpha\in (0,1)$), of the boundary value problem \eqref{1.1}-\eqref{1.5}. 
\end{Theorem}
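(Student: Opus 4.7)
The plan is to take the regularized problem as the starting point: as the preceding discussion establishes, for every small $\epsilon>0$ there exists a unique admissible $u_\epsilon \in C^{2,\gamma}(\bar\Omega)$ solving \eqref{re-problem}, together with the $\epsilon$-independent bounds $\underline u \le u_\epsilon \le \bar u$ in \eqref{uni sol} and $\sup_\Omega |Du_\epsilon| \le C$ in \eqref{uni gra}. What remains is therefore (a) to upgrade these to an $\epsilon$-independent interior $C^{1,1}$ estimate on compact subsets of $\Omega$, (b) to pass to the limit $\epsilon\to 0$ and identify $u$ as a solution of \eqref{1.1}--\eqref{1.5} in the sense of Theorem \ref{Th1.1}(i), and (c) in the non-degenerate case (F1, $B>0$) to upgrade the interior regularity to $C^{2,\alpha}$.

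For step (a), I would apply Theorem \ref{Th2.1} to $u_\epsilon$ with operator $F^\epsilon$. As noted just after \eqref{uniform ellipticity}, each of the structural hypotheses F1$^-$, F2, F3, F5, F5$^+$(0), F5($\infty$) passes from $F$ to $F^\epsilon$ with constants independent of $\epsilon$; moreover the degeneracy allowance in Remark \ref{Rem 2.4} is exactly what lets us appeal to Theorem \ref{Th2.1} under F1$^-$ with $B\ge 0$. Choosing any interior ball $B_R(x_0)\subset\subset \Omega$, the boundary term $\tilde M_2$ in \eqref{est LG} vanishes, so
\begin{equation*}
\sup_{B_{\theta R}(x_0)} |D^2 u_\epsilon| \le C\bigl(1+R^{-2}\bigr),
\end{equation*}
with $C$ depending only on $F,A,B$ and the $C^{0,1}$ bound for $u_\epsilon$. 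This gives a uniform $W^{2,\infty}_{\mathrm{loc}}(\Omega)$ bound on the family $\{u_\epsilon\}$.

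For step (b), Arzelà--Ascoli together with weak-$\ast$ compactness in $W^{2,\infty}_{\mathrm{loc}}(\Omega)$ yields a subsequence $u_{\epsilon_k}$ converging uniformly on $\bar\Omega$ and in $C^{1,\alpha}_{\mathrm{loc}}(\Omega)$, with Hessians converging weakly-$\ast$, to a limit $u \in C^{1,1}(\Omega)\cap C^{0,1}(\bar\Omega)$ satisfying $\underline u \le u \le \bar u$. Since $F^\epsilon(r)\to F(r)$ locally uniformly on $\Gamma$ (from \eqref{uni reg}) and since $M[u_{\epsilon_k}]\to M[u]$ a.e.\ in $\Omega$, Lebesgue's theorem applied along any approximating sequence shows that $M[u]\in\bar\Gamma$ a.e.\ and that $\mathcal F[u]=B(\cdot,u,Du)$ a.e.\ in $\Omega$. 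The boundary condition \eqref{1.5} is satisfied classically by each $u_{\epsilon_k}$, hence in the weak/viscosity sense described in the introduction; standard stability of viscosity boundary inequalities under uniform convergence of both solutions and operators then transfers the weak boundary condition to $u$.

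For step (c), in the case F1 with $B>0$, the uniform interior $C^{1,1}$ bound forces $M[u]$ to stay in a compact subset of the open cone $\Gamma$ on any $\Omega'\subset\subset \Omega$, so the equation \eqref{1.1} becomes uniformly elliptic there with $C^{1,1}$ coefficients. Evans--Krylov together with the Schauder interior estimates (as in \cite{GTbook}, or via the interior $C^{2,\alpha}$ theory already invoked in Remark \ref{Rem 3.1}) then gives $u \in C^{2,\alpha}(\Omega)$ for some $\alpha>0$, completing the F1 assertion. The most delicate step I anticipate is (b), specifically the verification that the weak boundary condition survives the limit despite the lack of $C^1$-control of $u$ up to $\partial\Omega$; the key is that the condition is phrased via upper/lower test functions touching from above/below, so uniform convergence of $u_{\epsilon_k}\to u$ on $\bar\Omega$ combined with the $\epsilon$-stability of $F^\epsilon$ and the classical nature of the boundary condition at each $\epsilon$-level is precisely what allows the standard viscosity perturbation argument to go through.
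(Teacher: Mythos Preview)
Your overall strategy matches the paper's: solve the regularized problem, obtain $\epsilon$-independent interior second derivative bounds, extract a limit, and invoke viscosity stability. The paper uses Theorem 1.1 of \cite{JT-oblique-I} directly for the interior $C^{1,1}$ bound rather than Theorem \ref{Th2.1}, but since \eqref{est LG} is derived from that same result this is not a substantive difference.

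There is, however, a genuine gap in your step (b). You assert that $M[u_{\epsilon_k}]\to M[u]$ a.e.\ in $\Omega$, and then use Lebesgue's theorem to pass to the limit pointwise in the equation. Weak-$\ast$ convergence of the Hessians in $L^\infty_{\mathrm{loc}}$ does \emph{not} imply a.e.\ convergence; bounded oscillating sequences are the standard counterexample. What the paper does instead is invoke the stability property of viscosity solutions (\cite{CIL1992,E1978}) for the equation in $\Omega$ as well as for the boundary condition: since $u_{\epsilon_k}\to u$ locally uniformly and $F^{\epsilon_k}\to F$ locally uniformly on $\Gamma$, the limit $u$ is a viscosity solution of \eqref{1.1}; being $C^{1,1}$, it then satisfies the equation a.e. You already appeal to viscosity stability for the boundary condition, so the fix is simply to use it in the interior too rather than attempting a direct a.e.\ passage.

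Two smaller points. In step (c) you apply Evans--Krylov to the limit $u$, which at that stage is only known to be $C^{1,1}$; the standard Evans--Krylov theorem (Theorem 17.14 in \cite{GTbook}) is stated for $C^2$ solutions, so you would need a viscosity or $W^{2,\infty}$ version. The paper avoids this by applying Evans--Krylov \emph{uniformly in $\epsilon$} to the smooth approximations $u_\epsilon$ themselves, obtaining $|u_\epsilon|_{2,\alpha;\Omega'}\le C$ independent of $\epsilon$, and then passing to the limit in $C^{2,\alpha}(\Omega)\cap C^{0,\alpha'}(\bar\Omega)$. Finally, the paper closes with a mollification argument to handle the general case $F\in C^0(\Gamma)$, since the preceding discussion (and in particular Theorem \ref{Th2.1}) assumes $F\in C^2(\Gamma)$; you should mention this step as well.
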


\begin{proof}
Assume first that $F\in C^2(\Gamma)$. We consider the case when $F$ satisfies F1$^-$ and $B\ge 0$.
We have already proved that there exists an admissible solution $u_\epsilon \in C^{2,\gamma}(\bar \Omega)$ for any $\gamma\in (0,1)$ of the regularized problem \eqref{re-problem} for any small $\epsilon>0$. From \eqref{uni sol} and \eqref{uni gra}, we have the uniform estimate
\begin{equation}\label{uni gra estimate}
|u_\epsilon|_{1;\Omega} \le C,
\end{equation}
with the constant $C$ independent of $\epsilon$.
Since $A$ is uniformly regular in $\Omega$ and $B$ satisfies either (a) or (b), taking $\Omega_0=\Omega$ in the local/global estimate in Theorem 1.1 in \cite{JT-oblique-I}, we have
\begin{equation}\label{uni int 2nd}
\sup_{\Omega^\prime} |D^2u_\epsilon| \le C,
\end{equation}
for any $\Omega^\prime \subset\subset \Omega$, where the constant $C$ is independent of $\epsilon$.   
Hence, from the uniform estimates \eqref{uni gra estimate} and \eqref{uni int 2nd}, there exists a subsequence $\{u_{\epsilon_k}\}$ and a function $u\in C^{1,1}(\Omega) \cap C^{0,1}(\bar \Omega)$ such that
\begin{equation}\label{epsilon tends 0}
u_{\epsilon_k} \rightarrow u, \quad {\rm in} \ C^{1,\alpha_1}(\Omega)\cap C^{0,\alpha_2}(\bar \Omega), 
\end{equation}
for all $\alpha_1, \alpha_2 \in (0, 1)$, as $\epsilon_k \rightarrow 0$. From the stability property of viscosity solutions \cite{CIL1992, E1978}, it is readily seen that $u$ is an admissible solution of the problem \eqref{1.1}-\eqref{1.5}, which belongs to $C^{1,1}(\Omega)\cap C^{0,1}(\bar \Omega)$ and satisfies the boundary condition \eqref{1.5} weakly.

We next consider the case when $F$ satisfies F1 and $B>0$. 
Under F1, it is standard that a non-strict subsolution of \eqref{1.1}-\eqref{1.5} can be made strict by using the linearized operator and the mean value theorem.
Consequently, we can have the same uniform solution estimate \eqref{uni sol} as well.
Also, under F1, the uniform global gradient estimate \eqref{uni gra} and uniform interior second derivative estimate \eqref{uni int 2nd} still hold.
From F1 and the interior second derivative estimate \eqref{uni int 2nd}, the operator $F^\epsilon$ satisfies the uniform ellipticity condition in the Evans-Krylov estimates; (see Theorem 17.14 in \cite{GTbook}), and we thus obtain 
\begin{equation}\label{C3}
|u_\epsilon|_{2, \alpha; \Omega^\prime} \le C,
\end{equation}
for some $\alpha \in (0,1)$ and any $\Omega^\prime \subset\subset \Omega$, with constant $C$ independent of $\epsilon$.
Since $F\in C^2(\Gamma)$, $A\in C^2(\bar \Omega\times \mathbb{R}\times \mathbb{R}^n)$, $B\in C^2(\bar \Omega\times \mathbb{R}\times \mathbb{R}^n)$, $\varphi \in C^{1,1}(\partial\Omega\times \mathbb{R})$, $\beta \in C^{1,1}(\partial\Omega)$ and $u_\epsilon \in C^{2,\gamma}(\bar\Omega)$, from the linear Schauder theory in \cite{GTbook} we have $u_\epsilon \in C^{4, \gamma} (\Omega) \cap C^{3,\gamma}(\bar \Omega)$.
Hence, from the uniform estimates \eqref{uni gra estimate} and \eqref{C3}, there exists a subsequence $\{u_{\epsilon_k}\}$ and a function $u\in C^{2,\alpha}(\Omega) \cap C^{0,1}(\bar \Omega)$ such that
\begin{equation}\label{epsilon tends 0}
u_{\epsilon_k} \rightarrow u, \quad {\rm in} \ C^{2,\alpha}(\Omega)\cap C^{0,\alpha^\prime}(\bar \Omega), 
\end{equation}
for $\alpha$ in \eqref{uni int 2nd} and any $\alpha^\prime \in (0, 1)$, as $\epsilon_k \rightarrow 0$. By the stability property of viscosity solutions \cite{CIL1992, E1978, Tru1990}, $u$ is an admissible solution of the problem \eqref{1.1}-\eqref{1.5}, which belongs to $C^{2,\alpha}(\Omega)\cap C^{0,1}(\bar \Omega)$ and satisfies the boundary condition \eqref{1.5} weakly.

In the general case when $F\in C^0(\Gamma)$, we can first approximate $F$ by mollifications, (as in Theorem 17.18 in \cite{GTbook}). Here the cone $\Gamma$ is replaced by a convex set 
$$\Gamma^h = \{r\in \mathbb{S}^n | r+h\xi\in \Gamma, \  \forall \xi\in \mathbb{S}^n \ {\rm satisfying} \  |\xi| = 1\},$$
for small $h>0$, and our approximating mollifications $F^h \in C^2(\Gamma^h)$, satisfy conditions F1$^-$ (or F1), F2 and F3
in $\Gamma^h$ and our previous arguments are applicable.  Then the full strength of Theorem \ref{Th3.1} follows. 
\end{proof}

In Theorem \ref{Th3.1}, we have proved the existence of solutions in $C^{1,1}(\Omega)\cap C^{0,1}(\bar\Omega)$ ($C^{2,\alpha}(\Omega)\cap C^{0,1}(\bar\Omega)$ for some $\alpha\in (0,1)$) of the oblique derivative problem \eqref{1.1}-\eqref{1.5} under F1$^-$ and $B\ge 0$ (F1 and $B>0$). 
Note that Theorem \ref{Th1.1}(i) is just the F1$^-$ and $B\ge 0$ case of Theorem \ref{Th3.1}, we thereby complete the proof of Theorem \ref{Th1.1}(i). 

\begin{remark}\label{Rem 3.2} 
Also from Theorem 1.3 in \cite{JT-oblique-I}, when $B>a_0=-\infty$ in the F1 case of Theorem \ref{Th3.1}, the existence of admissible solutions in $C^{2, \alpha}(\Omega)\cap C^{0,1}(\bar \Omega)$ still holds if $A$ and $B$ satisfy the growth conditions \eqref{quadratic growth} with $B\ge O(1)$. Furthermore Theorem \ref{Th3.1} itself extends to the general situation when $\Gamma$ is replaced by a general convex domain $D$, as in Remark \ref{Rem 1.4}, provided $F$ satisfies F5(0), (F5) in case (a). 
\end{remark}

\subsection*{Alternative hypotheses.}\label{Section 3.1}

We complete this section by elaborating on Remark \ref{Rem 1.1} and in particular discuss alternate hypotheses to uniform regularity of $A$ for gradient estimates. Since the discussion is mainly based on a modification of the proofs of Theorems 1.3 and 3.1 in \cite{JT-oblique-I}, unless otherwise specified, the notation in this subsection follows Section 3 in \cite{JT-oblique-I}.

First we show that the condition F7 can be replaced by F2 in the hypotheses of our gradient estimates in \cite{JT-oblique-I}, at least when $a_0 > -\infty$, provided we strengthen our growth conditions,  (3.31) and (3.33) in \cite{JT-oblique-I},  so that
\begin{equation}\label{3.19}
A= o(|p|^2)I, \quad p\cdot D_pA \le o(|p|^2)I, \quad p\cdot D_pB\le o(|p|^2),
\end{equation}
as $|p| \rightarrow \infty$, uniformly for $x\in\Omega$, $|z|\le M$ for any $M > 0$.  To see this we use the concavity F2 and orthogonal invariance of $\mathcal F$ to imply that if $F(r) = f(\lambda)$, where $\lambda = (\lambda_1, \cdots, \lambda_n)$ denote the eigenvalues of $r \in\Gamma$, then $D_if \le D_jf$ at any fixed point $\lambda$, where $\lambda_i\ge \lambda_j$. 
As pointed out in  \cite{Urbas1995}, this is geometrically evident but it also follows analytically from F2 by applying the mean value theorem to the function $g = D_if - D_jf$ at the points $\lambda$ and $\lambda^*$, where $\lambda^*$ is given by exchanging $\lambda_i$ and $\lambda_j$. We then get $g(\lambda)\le g(\lambda^*) = - g(\lambda)$, by symmetry, and hence $g(\lambda)\le 0$. Returning to our proof of the gradient estimate in Section 3 of \cite{JT-oblique-I} as above, we then obtain the estimate (3.42) in \cite{JT-oblique-I} also for $w_{kk}$ the minimum eigenvalue of $w$, whence $F^{kk} \ge \frac{1}{n} \mathscr{T}$ at the maximum point $x_0$. Without F7 we cannot use the term $KF^{ij}u_iu_j$ in (3.32) in \cite{JT-oblique-I} but this is offset by using our stronger growth conditions \eqref{3.19} and retaining the term  $\mathcal E_2^\prime$ in inequality (3.9) in \cite{JT-oblique-I}. Moreover the details are now technically simpler as we can then replace the function $\eta=e^{K(u-u_0)}$ in the auxiliary function $v$ in (3.13) of \cite{JT-oblique-I} by $\eta = u-u_0$, where $u_0 = \inf_\Omega u$, thereby obtaining in place of inequality (3.32) in \cite{JT-oblique-I},
\begin{equation}\label{3.20}
\begin{array}{rl}
\mathcal{L}\eta \!\!&\!\!\displaystyle =  F^{ij}(w_{ij} + A_{ij}) -(F^{ij}A_{ij}^k + B_{p_k})u_k \\
                \!\!&\!\!\displaystyle \ge F^{ij}w_{ij} - C(1+\mathscr{T})(\omega|Du|^2+1),
\end{array}
\end{equation}
while in place of (3.42) in \cite{JT-oblique-I}, we have simply
\begin{equation}\label{3.26}
w_{kk}  \le  - \frac{1}{6}\alpha M^2_1 + C(\omega|Du|^2+1),
\end{equation}
where $C$ is a constant and $\omega$ a positive decreasing function on $[0,\infty)$ tending to $0$ at infinity, depending on $A,B$ and $M_0 =\sup_\Omega|u|$.
We can then estimate for our fixed $k$, using again our growth condition $A= o(|p|^2)$,
\begin{equation}\label{3.21}
\begin{array}{rl}
\mathcal E_2^\prime \!\!&\!\!\displaystyle =  F^{ij}u_{il}u_{jl} \ge F^{kk}(w_{kk} +A_{kk})^2 \\
                \!\!&\!\!\displaystyle \ge \frac{(\alpha c_0)^2}{2n} M^4 _1\mathscr{T}- C(1+\mathscr{T})(\omega|Du|^2+1)^2.
\end{array}
\end{equation}

Recalling that $F^{ij}w_{ij} \ge 0$, when $a_0 > -\infty$, or more generally when condition F4 in \cite{JT-oblique-I} is satisfied, we can then proceed to recover, in this case, our local and global gradient estimates  in \cite{JT-oblique-I} under these alternative hypotheses to condition F7. Moreover, when $o$ is relaxed to $O$ in \eqref{3.19}, we obtain an estimate in terms of the modulus of continuity of the solution $u$, as in the last assertion of Theorem 3.1 in \cite{JT-oblique-I}, with F7 simply replaced by F2. Consequently we see that Theorem \ref{Th3.1} continues to hold when the condition that $A$ is uniformly regular is replaced by $A$ strictly regular, $|\frac{\beta}{\beta.\nu}- \nu|<1/\sqrt{n}$ and $\mathcal{F}$ orthogonally invariant,  together with any of the conditions (a), (b) or (c) in Remark \ref{Rem 1.1}.

Finally we address the situation when $a_0 = -\infty$ so that we cannot in general bound the term $F^{ij}w_{ij}$ in \eqref{3.20} from below from our other hypotheses. Clearly this can be overcome by a condition such as (3.54) in Theorem 3.1 in \cite{JT-oblique-I} but we will show here first that this condition can be removed altogether from that result. Accordingly we assume
first that $\mathcal F$ is orthogonally invariant satisfying F1, F3 and F7, $|\beta - \nu|<1/\sqrt{n}$ with $A$  and $B$ satisfying the full quadratic structure \eqref{quadratic growth} and $b_0:= \inf\limits_\Omega B(\cdot, u,Du) >a_0$, as in the last assertion of Theorem 3.1 in \cite{JT-oblique-I}.  We now make a further modification of the auxiliary function $v$ in (3.58) in \cite{JT-oblique-I} by taking 
\begin{equation}\label{3.22}
\eta=u-u_0 + K (u-u_0)^2
\end{equation}
where now  $u_0 = \inf\limits_{\Omega \cap B_R}u$ for some ball $B_R = B_R(y)$ of radius $R <1$ and centre $y$ intersecting 
$\partial\Omega$ and $K$ is a positive constant. Then in place of the estimate (3.64) in \cite{JT-oblique-I}, we have
\begin{equation}\label{3.23}
\zeta^2w_{11} \le -\frac{1}{6}\alpha \tilde M^2_1 + C[\zeta^2(|Du|^2+1)  +  \frac{1}{R}\zeta |Du|],
\end{equation}
so that we obtain again $w_{11}(x_0) < 0$ provided $\alpha > C$ is sufficiently large and $\zeta(x_0) |Du(x_0)|> C$, for some constant $C$,  depending on $F, A, B, \Omega, \varphi$ and $M_0$.  With our new choice of $\eta$, we then have from F7
 at the maximum point $x_0$ of $v$ 
\begin{equation}\label{3.24}
\begin{array}{rl}
\mathcal{L}\eta \!\!&\!\!\displaystyle \ge 2KF^{ij}u_iu_j+ [1+ 2K(u-u_0)] [F^{ij}u_{ij} -C(|Du|^2+1)(1+\mathscr T)] \\
                \!\!&\!\!\displaystyle \ge    c_0 K |Du|^2 \mathscr T -(1+2K\theta)[ \sqrt{\mathcal E^\prime_2 \mathscr T}  + C(|Du|^2+1)\mathscr T)]
\end{array}
\end{equation}
for some positive constant $c_0$ depending on $\delta_0$, $\delta_1$ and $n$, provided $\mathop{\rm osc}\limits_{ \Omega \cap B_R} u < \theta$.  Using Cauchy's inequality and choosing $K$ sufficiently large, we then obtain our desired local boundary estimate
\begin{equation}\label{3.25}
|Du(y)| \le \frac{C}{R},
\end{equation}
for  a sufficiently small positive constant $\theta$ depending, along with the constant $C$, on $F, A, B, \Omega, \varphi, \beta$ and $M_0$. Consequently the condition (3.54) is not needed in the hypotheses of Theorem 3.1 in \cite{JT-oblique-I} and from our H\"older estimates in Section 3.3 of \cite{JT-oblique-I} we thus have a global gradient estimate, for any $a_0$, if additionally $\beta = \nu$, $\Gamma\subset \Gamma_k$ with $k>n/2$ and $\Omega$ is convex, which is also applicable to the Dirichlet problem for arbitrary domains $\Omega$, \cite{JT-new}. 

When F7 is replaced by F2 as above, we at least need a control from below, $F^{ij}w_{ij} \ge o(|Du|^4$), at a maximum point of the function $v$, which would follow from a weakening of our condition (3.54) in \cite{JT-oblique-I}, namely
\begin{equation}\label{3.26}
-r\cdot F_r \le  o(|\lambda_0(r)|) \mathscr{T}(r) + O(1) |F(r)|
\end{equation}
as $\lambda_0(r) \rightarrow -\infty$, uniformly for $F(r) >a$ for any constant $a$,  where 
$\lambda_0$ denotes the minimum eigenvalue of $r \in \Gamma$, together with $B\le o(|p|^2)$ in case (b).  Corresponding conditions would also be needed to extend our gradient estimates in cases (a), (b) and (c) for finite $a_0$ above to the more general cases when $\Gamma$ is replaced by a general convex domain $D$, as in Remarks \ref{Rem 1.4} and \ref{Rem 2.4}.

\vspace{3mm}

\section{Comparison principles and uniqueness}\label{Section 4}
\vskip10pt

\rm In this section, we study various comparison principles for weak solutions of the oblique boundary value problem \eqref{1.1}-\eqref{1.5}. In particular we first consider  solutions in $C^{1,1}(\bar\Omega)$ and $C^{2}(\Omega)\cap C^{0,1}(\bar \Omega)$ as a preliminary to the general case of solutions in $C^{1,1}(\Omega)\cap C^{0,1}(\bar \Omega)$. With these results, we complete the proofs of the uniqueness and regularity assertions of Theorem \ref{Th1.1}, as well as the degenerate case in \cite{JT-oblique-I}.

For $F\in C^0(\Gamma)$, the superdifferential of $F$ at $r_0\in \Gamma$ is defined by
\begin{equation}
\partial^+ F(r_0) = \{s\in \mathbb{S}^n| \ F(r) \le F(r_0)+ s\cdot (r-r_0) +o(|r-r_0|) \ {\rm holds} \ {\rm for} \ r \ {\rm near} \ r_0 \}.
\end{equation}
Note that $\partial^+ F(r_0)$ is a closed, convex set, which  may be empty.
When F2 holds, $\partial^+ F(r_0)\neq \emptyset$ for all $r_0\in \Gamma$, and $\partial^+ F(r_0)$ is single valued if $F$ is differentiable at $r_0$ and is multi valued if $F$ is not differentiable at $r_0$.
In this case, we denote 
\begin{equation}\label{supdif Fij}
\{F^{ij}(r_0)\}_{1\le i, j \le n} :=\partial^+F(r_0),
\end{equation}
so that $F^{ij} = \frac{\partial F}{\partial r_{ij}} $ holds almost everywhere in $\Gamma$, and $F^{ij}$ is multi valued in a subset of measure zero in $\Gamma$. For $\{F^{ij}(r_0)\}$ in \eqref{supdif Fij}, we also denote
\begin{equation}\label{trace Fij}
\mathscr{T} = \mathscr{T}(r_0) : = {\rm trace} \{F^{ij}(r_0)\} = [ \mathscr{T^-},\mathscr{T^+}].
\end{equation}
Note that at the points where $F$ is differentiable, $\mathscr{T}$ in \eqref{trace Fij} agrees with $\mathscr{T^-}$ in condition F1$^-$.

We introduce a barrier construction, when $F\in C^0(\Gamma)$ satisfies conditions F1$^-$ and F2, which is a refinement of Part (ii) of Lemma 2.1 in \cite{JT-oblique-II}. 
\begin{Lemma}\label{Lemma 4.1}
Let $u\in C^{1,1}(\Omega)$ be a supersolution of equation \eqref{1.1}, $\underline u\in C^{1,1}(\bar \Omega)$ be a strict subsolution of equation \eqref{1.1} satisfying $\underline u \ge u$ in $\Omega$. Assume that $F$ satisfies F1$^-$ and F2, $A\in C^2(\bar \Omega \times \mathbb{R}\times \mathbb{R}^n)$ is regular and nondecreasing in $z$, $B\ge a_0, \in C^2(\bar \Omega \times \mathbb{R}\times \mathbb{R}^n)$ is convex in $p$ and nondecreasing in $z$. Then for $\eta =e^{K(\underline u-u)}$, the estimate
\begin{equation}\label{barrier inequality}
\mathcal{L} \eta \ge \delta_1 (1+\mathscr{T}) 
\end{equation}
holds almost everywhere in $\Omega$, where $K$ is a sufficiently large positive constant,  $\delta_1$ is a positive constant,  $\mathscr{T}$ is defined in \eqref{trace Fij}, and $\mathcal{L}$ is the operator in \eqref{linearized op full} with $F^{ij}$ defined in \eqref{supdif Fij}.
\end{Lemma}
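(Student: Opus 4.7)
The plan is to reduce the inequality to a lower bound on $\mathcal L v$, where $v := \underline u - u \ge 0$, via the chain rule, and then to obtain that lower bound by combining the concavity F2 of $F$ at $M[u]$ with the strict subsolution/supersolution inequalities and the monotonicity/convexity hypotheses on $A$ and $B$. A direct computation gives
\[
\mathcal L \eta = K e^{Kv} \bigl\{ K F^{ij} D_i v D_j v + \mathcal L v\bigr\},
\]
and since $F^{ij} \in \partial^+ F(M[u])$ is positive semidefinite (from F1$^-$ combined with F2), the first term in the braces is nonnegative, so it suffices to control $\mathcal L v$ from below.

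The key step is to apply F2 at $M[u]$ to obtain $F^{ij}\bigl[(M[\underline u])_{ij} - (M[u])_{ij}\bigr] \ge F(M[\underline u]) - F(M[u])$, and then to combine this with the strict subsolution inequality $F(M[\underline u]) \ge B(\cdot, \underline u, D\underline u) + \delta_0$, the supersolution inequality $F(M[u]) \le B(\cdot, u, Du)$, the monotonicity of $B$ in $z$ (to replace $\underline u$ with $u$ in $B$), and the convexity of $B$ in $p$ (to extract $D_{p_k}B \cdot D_k v$), yielding
\[
F^{ij}\bigl[(M[\underline u])_{ij} - (M[u])_{ij}\bigr] \ge \delta_0 + D_{p_k}B(\cdot, u, Du) D_k v.
\]
For the matrix difference on the left, I would split
\[
A(\cdot, \underline u, D\underline u) - A(\cdot, u, Du) = [A(\cdot, \underline u, Du) - A(\cdot, u, Du)] + [A(\cdot, \underline u, D\underline u) - A(\cdot, \underline u, Du)],
\]
the first bracket being positive semidefinite by the $z$-monotonicity of $A$ (so its contraction against $F^{ij} \ge 0$ is nonnegative), while the second is Taylor-expanded in $p$ to produce the leading term $A_{ij}^k(\cdot, u, Du) D_k v$, an $O(v)$ correction from the $z$-argument shift, and a quadratic remainder $Q := \int_0^1 (1-t) F^{ij} A_{ij}^{kl} D_k v D_l v\, dt$. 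Rearranging delivers $\mathcal L v \ge \delta_0 + Q + (\text{lower-order terms})$.

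It remains to bound $Q$ from below via the regularity of $A$. Writing $F^{ij} = \sum_a \lambda_a e_a^i e_a^j$ with $\lambda_a \ge 0$, I would decompose each $e_a$ into its components parallel and perpendicular to $Dv$; the purely perpendicular contributions are nonnegative by the regular condition $A_{ij}^{kl}\xi_i\xi_j\eta_k\eta_l \ge 0$ for $\xi \perp \eta$, whereas the parallel and cross contributions are controlled via Cauchy's inequality by $C F^{ij} D_i v D_j v + C' \mathscr T$, with $C, C'$ depending on the $C^2$-norm of $A$ and $|Dv|_{0;\Omega}$. Choosing $K$ large enough to absorb the $CF^{ij} D_i v D_j v$ part into the $KF^{ij}D_i v D_j v$ term of the barrier expansion, and using $\mathscr T \ge \mathscr T^- > 0$ from F1$^-$, then yields the asserted estimate $\mathcal L \eta \ge \delta_1(1+\mathscr T)$ almost everywhere in $\Omega$, for an appropriate $\delta_1 > 0$.

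The main obstacle is this last step: because $A$ is assumed only regular (not strictly regular), the lower bound on $Q$ from regularity is valid only in directions perpendicular to $Dv$, so the Cauchy-type decomposition of $F^{ij}$ must be arranged so that the uncontrolled parallel and cross contributions emerge precisely as a constant multiple of $F^{ij}D_i v D_j v$, which the first-derivative gain $KF^{ij}D_i v D_j v$ of the barrier can absorb once $K$ is taken sufficiently large.
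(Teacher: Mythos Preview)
Your overall architecture is right and matches the paper's (which simply invokes Lemma 2.1(ii) of \cite{JT-oblique-II}): compute $\mathcal L\eta = K\eta\{K F^{ij}D_ivD_jv + \mathcal L v\}$, estimate $\mathcal L v$ from below via F2, the sub/supersolution inequalities, the $z$-monotonicity of $A,B$, convexity of $B$ in $p$, and the regular condition on $A$, and finally absorb the bad quadratic piece into $K F^{ij}D_ivD_jv$.

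There is, however, a genuine gap in how you extract the $\mathscr T$-gain. Your concavity step
\[
F^{ij}\bigl[(M[\underline u])_{ij}-(M[u])_{ij}\bigr]\ \ge\ F(M[\underline u])-F(M[u])
\]
together with the strict subsolution inequality $F(M[\underline u])\ge B(\cdot,\underline u,D\underline u)+\delta_0$ produces only a \emph{constant} gain $\delta_0$, not one proportional to $\mathscr T$. After the regularity/Cauchy step you end up with something like $\mathcal L v \ge \delta_0 - \epsilon_0\mathscr T - C_{\epsilon_0}F^{ij}D_ivD_jv$, hence $\mathcal L\eta \ge K\eta\,[\delta_0-\epsilon_0\mathscr T]$ after absorbing the last term. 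Since $\mathscr T$ is in general unbounded, the closing line ``using $\mathscr T\ge\mathscr T^->0$'' cannot convert this into $\delta_1(1+\mathscr T)$.

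The missing device (used in \cite{JT-oblique-II} and reproduced in the paper's proof of Theorem~\ref{Th4.3}, see \eqref{strictly subsolution}--\eqref{estimate tilde Theta}) is to exploit that $\underline u$ is \emph{strictly} admissible: for some $\bar\delta,\bar\delta'>0$ one has $M[\underline u]-\bar\delta I\in\Gamma$ and $F(M[\underline u]-\bar\delta I)\ge B(\cdot,\underline u,D\underline u)+\bar\delta'$. Applying F2 at $M[u]$ to the pair $M[\underline u]-\bar\delta I$ and $M[u]$ then gives
\[
F^{ij}\bigl[(M[\underline u])_{ij}-(M[u])_{ij}\bigr]\ \ge\ \bar\delta\,\mathscr T \;+\; F(M[\underline u]-\bar\delta I)-F(M[u])\ \ge\ \bar\delta\,\mathscr T+\bar\delta'+\cdots,
\]
so that a genuine $\bar\delta\,\mathscr T$ appears on the right. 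Now choose the Cauchy parameter $\epsilon_0$ so small that the $-\epsilon_0|Dv|^2\mathscr T$ coming from the regularity bound on $Q$ is dominated by $\tfrac{\bar\delta}{2}\mathscr T$, and finally choose $K$ large (depending on $\epsilon_0$) to absorb the $F^{ij}D_ivD_jv$ piece. This yields $\mathcal L\eta\ge \delta_1(1+\mathscr T)$ with a uniform $\delta_1>0$, as claimed.
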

\begin{proof}
Note first that $\mathcal{L}$ in \eqref{linearized op full} and $\mathscr{T}$ in \eqref{trace Fij} still make sense for $F\in C^0(\Gamma)$ satisfying F1$^-$ and F2.
Since $\underline u$, $\ge u$, is a strict subsolution of equation \eqref{1.1}, from the monotonicity conditions of $A$ and $B$ in $z$, it is readily checked that $\underline u$ satisfies the strict subsolution condition (2.16) in \cite{JT-oblique-II}. Consequently, following the lines of the proof of Lemma 2.1(ii) in \cite{JT-oblique-II}, the inequality \eqref{barrier inequality} holds at the points where $u$ and $\underline u$ are twice differentiable.
\end{proof}

\begin{remark}\label{Rem about subsol}
Note that if $A$ and $B$ are independent of $z$, the assumption $\underline u \ge u$ in $\Omega$ is not needed for the barrier inequality \eqref{barrier inequality} in Lemma \ref{Lemma 4.1}, see \cite{JTY2013, JTY2014} for the Monge-Amp\`ere operator and $k$-Hessian operator cases.
\end{remark}

If the function $\eta$ in \eqref{barrier inequality} is replaced by $\tilde \eta : =\eta - \sup_{\Omega}\eta$, the barrier inequality \eqref{barrier inequality} still holds. Therefore, we can always assume $\eta\le 0$ in $\Omega$.

Based on the barrier inequality \eqref{barrier inequality} in Lemma \ref{Lemma 4.1}, we now present a comparison principle for $C^{1,1}$ solutions in the following theorem.
\begin{Theorem}\label{Th4.1}
Let $u, v \in C^{1,1}(\Omega)$ be a supersolution and a subsolution of equation \eqref{1.1} respectively. Assume that $F$ satisfies conditions F1$^-$-F3 in the cone $\Gamma\subset \mathbb{S}^n$, $A\in C^2(\bar \Omega \times \mathbb{R} \times \mathbb{R}^n)$ is regular and nondecreasing in $z$, $B\ge a_0, \in C^2(\bar \Omega \times \mathbb{R} \times \mathbb{R}^n)$ is convex in $p$ and nondecreasing in $z$. Assume also there exists a strict subsolution $\underline u\in C^{1,1}(\bar \Omega)$ of equation \eqref{1.1} satisfying $\underline u \ge u$ in $\Omega$. Then we have
\begin{equation}\label{reduction to boundary}
\sup_{\Omega} (v-u) \le \sup_{\partial\Omega} (v-u)^+,
\end{equation}
where $(v-u)^+ = \max\{v-u, 0\}$.
\end{Theorem}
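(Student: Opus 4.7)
My plan is a contradiction argument driven by the strict barrier $\eta = e^{K(\underline u - u)}$ furnished by Lemma \ref{Lemma 4.1}. Suppose $\sup_\Omega(v-u) > \sup_{\partial\Omega}(v-u)^+$, and introduce for small $\tau > 0$ the perturbation $w_\tau := (v-u) + \tau\eta$. The aim is to show that the supremum of $w_\tau$ on $\bar\Omega$ is attained on $\partial\Omega$, since sending $\tau\to 0^+$ then yields \eqref{reduction to boundary} via $v-u = w_\tau - \tau\eta$.

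Suppose to the contrary that $w_\tau$ attains its supremum at some interior $y_\tau \in \Omega$. By Alexandrov's theorem the $C^{1,1}$ functions $u,v,\eta$ are twice differentiable a.e., and a standard reduction permits $y_\tau$ to be such a point. Then $Dw_\tau(y_\tau) = 0$ gives $Dv - Du = -\tau D\eta$ at $y_\tau$, and $D^2 w_\tau(y_\tau) \le 0$ combined with $F^{ij}\ge 0$ (from F1$^-$) forces $\mathcal L w_\tau(y_\tau) \le 0$, where $\mathcal L$ is the linearized operator of \eqref{linearized op full} at $u$ with some fixed $\{F^{ij}\}\in \partial^+ F(M[u](y_\tau))$.

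For the opposite lower bound, concavity F2 of $F$ together with the sub- and supersolution inequalities yields
$$F^{ij}\bigl(M[v]-M[u]\bigr)_{ij}(y_\tau) \ge F(M[v]) - F(M[u]) \ge B(\cdot,v,Dv) - B(\cdot,u,Du).$$
Next I would expand $A(\cdot,v,Dv) - A(\cdot,u,Du)$ exactly by the integral mean value formula and estimate $B(\cdot,v,Dv) - B(\cdot,u,Du)$ from below using convexity of $B$ in $p$ (subgradient at $Du$) together with nondecrease in $z$ (valid since $v > u$ near $y_\tau$). Substituting $Dv - Du = -\tau D\eta$, the linear-in-gradient contributions recombine into exactly the first-order part of $\tau\mathcal L\eta$ up to a residual of size $\tau|D\eta|(|v-u|+\tau|D\eta|)(1+\mathscr T)$, while the zeroth-order contribution $(\tilde D_zB + F^{ij}\tilde D_zA_{ij})(v-u)$, with tildes denoting the integrated coefficients, is manifestly nonnegative and may be discarded. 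Adding $\tau\mathcal L\eta(y_\tau) \ge \tau\delta_1(1+\mathscr T)$ from Lemma \ref{Lemma 4.1} yields
$$\mathcal L w_\tau(y_\tau) \ge \tau(1+\mathscr T)\bigl[\delta_1 - C|D\eta|\bigl(|v-u| + \tau|D\eta|\bigr)\bigr].$$
Since $|D\eta| \le CK$, $|v-u|$ is a priori bounded, and $\delta_1$ can be made large by choosing $K$ sufficiently large in Lemma \ref{Lemma 4.1} (the quadratic contribution $K^2\eta F^{ij}D_i(\underline u - u)D_j(\underline u - u)$ in $\mathcal L\eta$ grows like $K^2$), first fixing $K$ and then $\tau$ sufficiently small makes the bracket strictly positive. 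Hence $\mathcal L w_\tau(y_\tau) > 0$, contradicting $\mathcal L w_\tau(y_\tau) \le 0$. Therefore $\sup w_\tau = \sup_{\partial\Omega} w_\tau$, and letting $\tau \to 0^+$ yields \eqref{reduction to boundary}.

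The main obstacle is the bookkeeping of cancellations in the expansion: the linear-in-gradient terms coming from convexity of $B$ and from the regularity expansion of $A$ must recombine exactly into the first-order part of $\tau\mathcal L\eta$, leaving a residual carrying an extra factor of $\tau|D\eta|$ that can be absorbed by the $\tau\delta_1(1+\mathscr T)$ barrier contribution. A secondary issue is the justification of the second-order calculation at the $C^{1,1}$ Alexandrov point $y_\tau$ and the consistent choice of an element of $\partial^+ F(M[u](y_\tau))$ throughout the derivation; both may be handled either directly via Alexandrov's theorem or, more cleanly, by approximation through the regularized smooth solutions of Section \ref{Section 3} and passing to the limit.
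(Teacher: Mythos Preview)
Your overall strategy matches the paper's: perturb by the barrier $\eta=e^{K(\underline u-u)}$, suppose an interior maximum, and derive a sign contradiction via Lemma \ref{Lemma 4.1}. However, there is a genuine gap in your remainder bookkeeping.

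The problematic term is the residual $\tau|D\eta|\,|v-u|\,(1+\mathscr T)$ coming from your integral mean-value expansion of $A(\cdot,v,Dv)-A(\cdot,u,Du)$. After factoring out $\tau$, this leaves $C|D\eta|\,|v-u|$ competing against $\delta_1$ in the bracket. You assert that $\delta_1$ grows like $K^2$ because of the quadratic term $K^2\eta\,F^{ij}D_i(\underline u-u)D_j(\underline u-u)$ in $\mathcal L\eta$; but that term is merely nonnegative and does \emph{not} contribute $K^2(1+\mathscr T)$ to the lower bound (it can vanish if $D\underline u=Du$ at the point, and in any case $F^{ij}\xi_i\xi_j$ has no useful lower bound in terms of $\mathscr T$). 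Tracing the proof of Lemma \ref{Lemma 4.1} (or its re-derivation around \eqref{estimate tilde Theta}), one finds $\delta_1\sim K\,\eta_{\min}\min(\bar\delta/2,\bar\delta')$, i.e.\ $\delta_1$ grows only \emph{linearly} in $K$. Since $|D\eta|=K\eta|D(\underline u-u)|$ is also linear in $K$, the competition $\delta_1$ versus $C|D\eta|\,|v-u|$ is a wash, and neither enlarging $K$ nor shrinking $\tau$ makes the bracket positive when $|v-u|$ is large.

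The paper avoids this entirely by using the $z$-monotonicity of $A$ and $B$ \emph{before} the Taylor expansion in $p$: since $v(x_0)>u(x_0)$, one first passes from $A(\cdot,v,Dv)$ to $A(\cdot,u,Dv)$ (and similarly for $B$), and only then expands in $p$ about $Du$. This yields a clean second-order remainder $O(|Dv-Du|^2)=O(\tau^2|D\tilde\eta|^2)$ with no $|v-u|$ factor, so that for \emph{fixed} $K$ (determined by Lemma \ref{Lemma 4.1}) one simply takes $\tau<\delta_1/(C\sup|D\tilde\eta|^2)$.

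A secondary issue: your ``standard reduction'' to an Alexandrov point of twice differentiability is not standard. The maximum of a $C^{1,1}$ function need not occur at such a point, and your proposed fix via the regularized solutions of Section \ref{Section 3} does not apply, since Theorem \ref{Th4.1} concerns arbitrary $C^{1,1}$ sub- and supersolutions. The paper handles this correctly by invoking the Bony maximum principle \cite{Bony1967,Lions1983} and working with $\limsup_{y\to x_0}$ of the relevant quantities over points of twice differentiability.
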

\begin{proof}
For $\tau >0$, we suppose that $v-(u-\tau \tilde \eta)$ attains its positive maximum at a point $x_0\in \Omega$, namely 
$$v(x_0)-(u-\tau \tilde\eta)(x_0)=\max_{\bar \Omega}[v-(u-\tau \tilde \eta)]>0,$$
where $\tilde\eta=\eta - \sup_{\Omega}\eta$, and $\eta=e^{K(\underline u-u)}$ is the barrier function in Lemma \ref{Lemma 4.1}.
Since $\tilde\eta\le 0$ in $\Omega$, we can have 
$$v(x_0)>u(x_0).$$
Since the functions $u, v$ and $\tilde\eta$ belong to $C^{1,1}(\Omega)$, from Bony maximum principle in \cite{Bony1967, Lions1983} we have
\begin{equation}\label{3.6}
\mathop{\lim \sup}\limits_{y\rightarrow x_0}\bar \lambda (y) \le 0, \quad [Dv-D(u-\tau \tilde\eta)](x_0) =0,
\end{equation}
where $\bar \lambda(y)$ is the largest eigenvalue of $[D^2v-D^2(u-\tau \tilde\eta)](y)$. Using the definitions of the supersolution $u$ and subsolution $v$, we have
\begin{align}
0 \le &  \mathop{\lim \sup }\limits_{y\rightarrow x_0} \{[\mathcal{F}[v]-B(\cdot, v, Dv)](y) -[\mathcal{F}[u]- B(\cdot, u, Du)](y)\} \nonumber \\ 
   \le &  \mathop{\lim \sup }\limits_{y\rightarrow x_0}\{ F^{ij}(M[u](y)) [ D_{ij}v - D_{ij}u - (A_{ij}(\cdot, v, Dv) -  A_{ij}(\cdot, u, Du)) ](y) \nonumber \\
        & - [ B(\cdot, v, Dv) - B(\cdot, u, Du)](y) \}   \nonumber  \\
   \le &  \mathop{\lim \sup }\limits_{y\rightarrow x_0}\{ F^{ij}(M[u](y)) [ -\tau D_{ij}\tilde \eta - (A_{ij}(\cdot, u, Dv) -  A_{ij}(\cdot, u, Du)) ](y)  \label{3.7} \\
        & - [ B(\cdot, u, Dv) - B(\cdot, u, Du)](y) \}  \nonumber \\
   =   &  \mathop{\lim \sup }\limits_{y\rightarrow x_0}\{ -\tau \mathcal{L}\tilde \eta (y)- F^{ij}(M[u](y)) [ A_{ij}(\cdot, u, Dv) -  A_{ij}(\cdot, u, Du) + \tau D_{p_k}A_{ij}(\cdot, u, Du)D_k\tilde \eta](y)  \nonumber  \\
        & - [ B(\cdot, u, Dv) - B(\cdot, u, Du) + \tau D_{p_k}B(\cdot, u, Du)D_k\tilde \eta](y) \}  \nonumber      
\end{align}
where F2 and \eqref{supdif Fij} are used in the second inequality, the inequality in \eqref{3.6} and the monotonicity of $A$ and $B$ are used in the third inequality.
Using the subadditivity of $\mathop{\lim \sup }\limits_{y\rightarrow x_0}$, we have
\begin{align}
        &  \mathop{\lim \sup }\limits_{y\rightarrow x_0}\{ -\tau \mathcal{L}\tilde \eta (y)- F^{ij}(M[u](y)) [ A_{ij}(\cdot, u, Dv) -  A_{ij}(\cdot, u, Du) + \tau D_{p_k}A_{ij}(\cdot, u, Du)D_k\tilde \eta](y)  \nonumber \\
        & - [ B(\cdot, u, Dv) - B(\cdot, u, Du) + \tau D_{p_k}B(\cdot, u, Du)D_k\tilde \eta](y) \}  \nonumber \\
   \le & -\tau  \mathop{\lim \inf }\limits_{y\rightarrow x_0} \mathcal{L}\tilde \eta (y) - [ B(\cdot, u, Dv) - B(\cdot, u, Du) - D_{p_k}B(\cdot, u, Du)D_k(v-u)](x_0) \nonumber \\
        & - [ A_{ij}(\cdot, u, Dv) -  A_{ij}(\cdot, u, Du) - D_{p_k}A_{ij}(\cdot, u, Du)D_k(v-u)](x_0) [ \mathop{\lim \inf }\limits_{y\rightarrow x_0}F^{ij}(M[u](y)) ] \nonumber \\
   =   &  -\tau  \mathop{\lim \inf }\limits_{y\rightarrow x_0} \mathcal{L}\tilde \eta (y) - [\frac{1}{2} D_{p_kp_l}B(\cdot, u, \bar p) D_k(v-u)D_l(v-u) ](x_0) \label{3.9} \\
        & -[\frac{1}{2} D_{p_kp_l}A_{ij}(\cdot, u, \tilde p) D_k(v-u)D_l(v-u) ](x_0) [ \mathop{\lim \inf }\limits_{y\rightarrow x_0}F^{ij}(M[u](y)) ] \nonumber \\
   \le & [1+ \mathop{\lim \inf }\limits_{y\rightarrow x_0}\mathscr{T}(M[u](y))] (-\tau \delta_1 + C|D(v-u)(x_0)|^2)     \nonumber \\
   \le & [1+\mathop{\lim \inf }\limits_{y\rightarrow x_0}\mathscr{T}(M[u](y))] [-\tau \delta_1 + C\tau^2 \sup_\Omega (|D\tilde\eta|^2)] \nonumber \\
   <   & 0, \nonumber
\end{align}
by taking $\tau$ sufficiently small such that $\tau \in (0, \delta_1 / \{C\sup_{\Omega}(|D\tilde\eta|^2)\})$ if $\sup_{\Omega}(|D\tilde\eta|^2)$ is bounded, where Taylor's formula is used in the equality with $\tilde p=tDu+(1-t)Dv$ and $\bar p=sDu+(1-s)Dv$ for some $t, s\in (0,1)$, and the barrier inequality \eqref{barrier inequality} in Lemma \ref{Lemma 4.1} is used to obtain the second inequality.

Combining \eqref{3.7} with \eqref{3.9}, we get a contradiction.
Then $v-(u-\tau\tilde \eta)$ can only take its positive maximum on $\partial\Omega$, namely
\begin{equation}\label{reduction to boundary tilde}
\sup_{\Omega} [v-(u-\tau \tilde\eta) ] \le \sup_{\partial\Omega} [v-(u-\tau \tilde\eta) ]^+,
\end{equation} 
Letting $\tau \rightarrow 0$, the conclusion \eqref{reduction to boundary} is now proved. 

In the case when $\sup_{\Omega}(|D\tilde\eta|^2)$ in \eqref{3.9} is unbounded, we can repeat the above argument with $\Omega$ replaced by the parallel approximating domains $\Omega_{\epsilon}=\{x\in \Omega| \ {\rm dist}(x, \partial \Omega)>\epsilon\}$ for $\epsilon>0$ sufficiently small. Since $\sup_{\Omega_\epsilon}(|D\tilde\eta|^2)$ is bounded for $\epsilon>0$, \eqref{reduction to boundary tilde} still holds with $\Omega$ replaced by $\Omega_\epsilon$. Then by letting $\epsilon\rightarrow 0$ and $\tau \rightarrow 0$, we also get the conclusion \eqref{reduction to boundary}.
\end{proof}

\begin{remark}
In the proof of Theorem \ref{Th4.1}, the infimum and the supremum in the notation ``$\mathop{\lim \inf }\limits_{y\rightarrow x_0}$'' and ``$\mathop{\lim \sup }\limits_{y\rightarrow x_0}$'' should be understood in the sense of essential infimum and essential supremum, respectively.
We remark that alternatively we can directly use Proposition 1 in \cite{Lions1983} to get the proof of Theorem \ref{Th4.1}, which can avoid such a limiting process.
\end{remark}

\begin{remark}
When only $\varphi$ is strictly increasing in $z$, since $v-u$ can not take positive maximum on $\partial\Omega$, the uniqueness for $C^{1,1}(\bar \Omega)$ solutions in Corollaries 4.1, 4.2 and 4.3 in \cite{JT-oblique-I} follows from the comparison principle \eqref{reduction to boundary} in Theorem \ref{Th4.1}. These uniqueness results have already been foreshadowed at the end of Section 4.3 in \cite{JT-oblique-I}.
\end{remark}

Next, when F1 holds, we consider the comparison principle for the solutions  in the class $C^{2}(\Omega)\cap C^{0,1}(\bar \Omega)$ of the oblique boundary value problem \eqref{1.1}-\eqref{1.5}.
\begin{Theorem}\label{Th4.2}
Let $u, v \in C^{2}(\Omega)\cap C^{0,1}(\bar \Omega)$ be a supersolution and a subsolution of the oblique boundary value problem \eqref{1.1}-\eqref{1.5} respectively, $\Omega\subset \mathbb{R}^n$ with $\partial\Omega\in C^2$. Assume that $F$ satisfies conditions F1-F3 in the cone $\Gamma\subset \mathbb{S}^n$, $B> a_0$ and $\varphi\in C^0(\partial \Omega \times \mathbb{R})$. 
Assume also that $A, B$ and $\varphi$ are nondecreasing in $z$.
Assume further that at least one of the following conditions holds:
\begin{itemize}
\item[(i)] $\varphi$ is strictly increasing in $z$, $A\in C^2(\bar \Omega \times \mathbb{R} \times \mathbb{R}^n)$ is regular, $B\in C^2(\bar \Omega \times \mathbb{R} \times \mathbb{R}^n)$ is convex in $p$, and there exists a strict subsolution $\underline u\in C^{2}(\bar \Omega)$ of equation \eqref{1.1} satisfying $\underline u \ge u$ in $\Omega$;

\item[(ii)]  $A\in C^1(\bar \Omega \times \mathbb{R} \times \mathbb{R}^n)$ is strictly increasing in $z$;

\item[(iii)]  $B\in C^1(\bar \Omega \times \mathbb{R} \times \mathbb{R}^n)$ is strictly increasing in $z$, and $\mathscr{T}(r)$ is bounded from above for $r\in \Gamma$.
\end{itemize}
Then we have
\begin{equation}\label{comparison in Th4.2}
u\ge v, \quad {\rm in} \ \bar \Omega.
\end{equation}
\end{Theorem}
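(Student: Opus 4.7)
The strategy is proof by contradiction: I would assume $\sup_{\bar\Omega}(v - u) > 0$, attained at some $x_0 \in \bar\Omega$, and dispatch the interior case $x_0 \in \Omega$ and the boundary case $x_0 \in \partial\Omega$ separately in each of (i), (ii), (iii).

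For the interior case, the $C^2$-regularity of $u$ and $v$ in $\Omega$ gives $Du(x_0) = Dv(x_0)$ and $D^2v(x_0) \leq D^2u(x_0)$, whence $M[v](x_0) \leq M[u](x_0)$ using that $A$ is nondecreasing in $z$ and $v(x_0) > u(x_0)$. Then F1 yields $F(M[v]) \leq F(M[u])$, and chaining with the sub/supersolution inequalities and the $z$-monotonicity of $B$ produces the sandwich
\[
B(x_0, v, Dv) \,\le\, F(M[v]) \,\le\, F(M[u]) \,\le\, B(x_0, u, Du) \,\le\, B(x_0, v, Dv),
\]
forcing equality throughout. In case (ii), strict $z$-monotonicity of $A$ sharpens $M[v] \leq M[u]$ to $M[v] < M[u]$ and F1 breaks the middle equality; in case (iii), strict $z$-monotonicity of $B$ directly violates the outer equality. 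In case (i), where neither $A$ nor $B$ is strict in $z$, I would instead invoke Lemma 4.1 with the strict subsolution $\underline u$ and argue as in the proof of Theorem 4.1, perturbing to $v - (u - \tau \tilde\eta)$ with $\tilde\eta = e^{K(\underline u - u)} - \sup \eta \leq 0$ and combining the barrier inequality $\mathcal{L}\tilde\eta \geq \delta_1(1 + \mathscr{T})$ with the convexity of $B$ in $p$ via the Bony maximum principle; in particular Theorem 4.1 applies cleanly in case (i) and reduces any positive supremum of $v-u$ to the boundary.

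For the boundary case at $x_0 \in \partial\Omega$, the tangential derivatives of $v - u$ vanish at the maximum and the oblique derivative is nonpositive, so that together with the weak inequalities $\mathcal G[v] \geq 0$, $\mathcal G[u] \leq 0$ and the $z$-monotonicity of $\varphi$ one obtains the chain
\[
\varphi(x_0, v(x_0)) \,\le\, \beta \cdot Dv(x_0) \,\le\, \beta \cdot Du(x_0) \,\le\, \varphi(x_0, u(x_0)),
\]
which in case (i) contradicts the strict $z$-monotonicity of $\varphi$, completing that case. In cases (ii) and (iii), where $\varphi$ is only nondecreasing, I would perturb: set $u_\tau = u + \tau$ in case (iii), which is a strict supersolution of the equation by the strict $z$-monotonicity of $B$, or $v_\tau = v - \tau$ in case (ii), which is a strict subsolution by the strict $z$-monotonicity of $A$. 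Each perturbation remains a weak sub/supersolution of the boundary condition, and the strict deficit in the equation, combined with the bound on $\mathscr T$ in case (iii), converts the boundary chain above into a contradiction after sending $\tau \to 0$.

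The chief obstacle is the rigorous execution of the boundary step, since $u, v$ are only Lipschitz at $\partial\Omega$: the classical inequalities on $Du, Dv$ at $x_0$ must be replaced by admissible $C^2$ test-function inequalities coming from the viscosity formulation of the boundary condition. This is handled by constructing admissible quadratic perturbations that touch $u$ from below and $v$ from above at $x_0$, using the $C^2$-regularity of $u, v$ in $\Omega$ together with the smoothness of $\partial\Omega \in C^2$; in case (iii) the bound on $\mathscr T$ further absorbs the resulting error as the perturbation parameter vanishes.
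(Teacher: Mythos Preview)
Your treatment of case (i) is essentially the paper's: invoke Theorem~4.1 to reduce to a boundary maximum, then use the viscosity test-function formulation at $x_0\in\partial\Omega$ together with the strict monotonicity of $\varphi$ to obtain a contradiction. Your interior argument in cases (ii) and (iii) also matches the paper's.

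The gap is in your boundary argument for cases (ii) and (iii). Your perturbations $u_\tau=u+\tau$ (case (iii)) or $v_\tau=v-\tau$ (case (ii)) do produce a strict defect in the \emph{equation}, but at a boundary maximum the only information the viscosity formulation gives you, under F1, is the inequality on $\mathcal G$ for the test functions. The chain
\[
\varphi(x_0,v(x_0))\ \le\ \beta\cdot D\phi(x_0)\ \le\ \beta\cdot D\psi(x_0)\ \le\ \varphi(x_0,u(x_0))
\]
therefore remains an equality chain when $\varphi$ is merely nondecreasing, and the strict deficit you created in $\mathcal F$ never enters; the bound on $\mathscr T$ is irrelevant here because $\mathscr T$ does not appear in $\mathcal G$. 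Your final paragraph proposes ``admissible quadratic perturbations'' absorbing an error, but there is no mechanism explained by which the interior equation is brought to bear at a boundary extremum.

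The paper handles (ii) and (iii) by an entirely different device: it \emph{doubles variables}, maximizing
\[
\Phi(x,y)=v(x)-u(y)-\tfrac{1}{2\epsilon}a^{ij}(z)(x_i-y_i)(x_j-y_j)-\varphi(z,v(z))\beta(z)\!\cdot\!(x-y)+\delta(d(x)+d(y))-\delta|x-z|^2
\]
over $\bar\Omega\times\bar\Omega$. The specific penalty (with $a^{ij}\beta_j=\nu_i$ on $\partial\Omega$ and the uniform exterior sphere condition) forces, for small $\epsilon$, that neither coordinate of the maximizer $(\hat x,\hat y)$ lies on $\partial\Omega$; this is exactly what converts the problem into a comparison of the \emph{equation} at two interior points, where the strict monotonicity of $A$ (case (ii)) or of $B$ together with the bound on $\mathscr T$ (case (iii)) then yields the contradiction. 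Without this doubling step your argument for (ii) and (iii) does not close.
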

\begin{proof}
In case (i), by Theorem \ref{Th4.1}, the inequality \eqref{reduction to boundary} holds. Consequently, we can assume that $v-u$ attains its positive maximum at a point $z \in \partial\Omega$. Since F1 holds and $u, v \in C^{2}(\Omega)\cap C^{0,1}(\bar \Omega)$ satisfy $\mathcal{G}[u]\le 0$ and $\mathcal{G}[v] \ge 0$ on $\partial\Omega$ weakly, for any admissible functions $\phi, \psi \in C^2(\bar\Omega)$ satisfying $u\le \phi$, $v\ge \psi$ in $\bar \Omega$, $\phi(z)=u(z)< v(z)=\psi(z)$, we have
$$\mathcal{G}[\phi](z)\le 0, \quad {\rm and} \ \  \mathcal{G}[\psi](z) \ge 0.$$ 
Then we have
\begin{align}
0 & \le \mathcal{G}[\psi](z) - \mathcal{G}[\phi](z) \nonumber \\
   & =   \beta(z) \cdot D(\psi-\phi)(z) + [\varphi(z, \phi(z)) - \varphi(z, \psi(z))]  \label{contradiction 1 in Th3.2} \\
   & < 0, \nonumber
\end{align}
since $\beta(z) \cdot D(\psi-\phi)(z)\le 0$ and $\varphi(z, \phi(z)) - \varphi(z, \psi(z))<0$. The contradiction \eqref{contradiction 1 in Th3.2} implies the conclusion \eqref{comparison in Th4.2}.

Now we consider the cases (ii) and (iii).  
We suppose that $\max_{\bar\Omega}(v-u)=:\theta >0$. We may suppose that $\max_{\partial\Omega}(v-u)=:\theta$, otherwise we will get a contradiction. In fact, if there is a point $x_0\in \Omega$ such that $v(x_0)-u(x_0)=\theta$, we have $v(x_0)>u(x_0)$, $Dv(x_0)=Du(x_0)$ and $D^2v(x_0) \le D^2u(x_0)$. Consequently, from the definitions of supersolution $u$ and subsolution $v$, and using F1 and the nondecreasing properties for both $A$ and $B$ as well as the strictly increasing property for either $A$ or $B$, we have
\begin{align}
0 \le & \{\mathcal{F}[v](x_0) - B(x_0, v(x_0), Dv(x_0))\} -\{\mathcal{F}[u](x_0)-B(x_0, u(x_0), Du(x_0))\} \nonumber \\
     = & F(D^2v(x_0)-A(x_0, v(x_0), Dv(x_0)) - F(D^2u(x_0)-A(x_0, u(x_0), Du(x_0)) \label{max F Th3.2} \\
        & + B(x_0, u(x_0), Du(x_0)) - B(x_0, v(x_0), Dv(x_0)) \nonumber  \\
     < & 0,  \nonumber 
\end{align}
which leads to a contradiction.
Then we can assume that $v-u$ attains its positive maximum at a point $z \in \partial\Omega$. 
We consider the function
\begin{equation}\label{Phi x y}
\Phi (x,y) : = v(x) - u(y) - \phi(x,y), 
\end{equation}
with
\begin{equation}\label{phi x y}
\phi(x,y): = \frac{1}{2\epsilon} a^{ij}(z) (x_i-y_i)(x_j-y_j) + \varphi(z, v(z)) \beta(z)\cdot (x-y) - \delta (d(x)+ d(y)) + \delta |x-z|^2,
\end{equation}
where $\epsilon, \delta$ are positive constants, $d\in C^2(\bar\Omega)$ is a positive defining function for $\Omega$ which agrees with the distance function $d(\cdot)={\rm dist} (\cdot, \partial\Omega)$ in a neighbourhood of  $\partial\Omega$, the symmetric matrix $a(\cdot)=\{a^{ij}(\cdot)\}\in C^2(\bar \Omega)$ satisfies
\begin{equation}\label{relation between beta nu}
a(\cdot) \ge a^0 I,  \ {\rm in} \ \ \bar\Omega, \quad\quad a^{ij}(\cdot) \beta_j(\cdot) =\nu_i(\cdot),  \ \  {\rm on} \ \partial\Omega,
\end{equation}
for some constant $a^0>0$, and $i=1,\cdots, n$. Note that \eqref{relation between beta nu} is guaranteed by the obliqueness $\beta\cdot\nu >0$ on $\partial\Omega$.
Clearly, when $\beta\equiv \nu$, we can just take $a(\cdot)=I$ in $\bar \Omega$.
For $\epsilon>0$, let $(x_\epsilon, y_\epsilon)$ be a maximum point of $\Phi(x,y)$. Since $v(x)-u(x)- \delta |x-z|^2$ has $x=z$ as a unique maximum point for $\delta>0$, it is standard to obtain
\begin{equation}\label{convergence}
x_\epsilon \rightarrow z, \quad \frac{1}{\epsilon} a^{ij}(z) ({x_\epsilon}-{y_\epsilon})_i({x_\epsilon}-{y_\epsilon})_j \rightarrow 0, \quad v(x_\epsilon)\rightarrow v(z), \quad  u(y_\epsilon)\rightarrow u(z),
\end{equation}
as $\epsilon\rightarrow 0$.
For simplicity, we write $(\hat x, \hat y)$ for $(x_\epsilon, y_\epsilon)$. 
Since \eqref{convergence} holds, by taking sufficiently small $\epsilon$ and $\delta$, we can have
\begin{equation}\label{v>u}
v(\hat x) - u(\hat y) \ge  \frac{\theta}{2} >0.
\end{equation}
Since $\Omega$ is bounded and $\partial\Omega\in C^2$, then $\Omega$ satisfies the uniform exterior sphere condition, namely there exists $r>0$, such that 
\begin{equation}\label{UESC}
B(z-r\nu(z), r)\cap \Omega = \emptyset,
\end{equation}
for $z\in \partial\Omega$, where $B(z-r\nu(z), r)$ denotes the closed ball of radius $r$ centered at $z-r\nu(z)$, $\nu(z)$ is the unit inner normal vector at $z$. For $\Omega$ satisfying \eqref{UESC}, since $|x-z+r\nu(z)|\ge r$ for $z\in \partial\Omega$ and $x\in \bar \Omega$, we have
\begin{equation}\label{EUESC}
\nu(z) \cdot (z-x) \le \frac{|z-x|^2}{2r},
\end{equation} 
for $z\in \partial\Omega$ and $x\in \bar \Omega$. The geometric property \eqref{EUESC} was first observed in \cite{L1983}, and later used in \cite{LS1984, L1985, IL1990} and etc.

If $\hat x \in \partial\Omega$, taking $\xi(x)=u(y)+\phi(x,y)$, we see that $v(x)-\xi(x)$ attains its maximum at $\hat x \in \partial\Omega$. Hence, from the definition of viscosity subsolution, we have
\begin{equation}\label{xi vis sub}
\beta(\hat x)\cdot D_x\xi(\hat x) - \varphi(\hat x, v(\hat x)) \ge 0. 
\end{equation}
By calculations and using $Dd = \nu$ on $\partial\Omega$, we have
\begin{align}
                                      &  \beta(\hat x)\cdot D_x\xi(\hat x) - \varphi(\hat x, v(\hat x)) \nonumber \\
                                   = & \frac{1}{\epsilon} a^{ij}(z)\beta_i(\hat x)(\hat x_j - \hat y_j) + \varphi(z,v(z))\beta(\hat x)\cdot \beta(z) \label{G xi x} \\
                                      & - \delta \beta(\hat x)\cdot \nu(\hat x) + 2 \beta(\hat x)\cdot (\hat x-z) - \varphi(\hat x, v(\hat x)).  \nonumber
\end{align}
By using \eqref{relation between beta nu} and \eqref{EUESC}, we can estimate the first term on the right hand side of \eqref{G xi x}, namely
\begin{align}
\frac{1}{\epsilon} a^{ij}(z)\beta_i(\hat x)(\hat x_j - \hat y_j) =  &  \frac{1}{\epsilon} a^{ij}(\hat x)\beta_i(\hat x)(\hat x_j - \hat y_j) +  \frac{1}{\epsilon} [a^{ij}(z)-a^{ij}(\hat x)]\beta_i(\hat x)(\hat x_j - \hat y_j) \label{tricky term} \\
                                                                                          \le & \frac{|\hat x - \hat y|^2}{2\epsilon r} + \frac{C}{\epsilon} |\hat x-z||\hat x - \hat y|, \nonumber
\end{align}
for some positive constant $C$ depending on $\|a^{ij}\|_{C^1(\bar \Omega)}$. Combining \eqref{G xi x} with \eqref{tricky term}, and using the obliqueness \eqref{ob} and the convergence \eqref{convergence}, we get
\begin{align}
     & \beta(\hat x)\cdot D_x\xi(\hat x) - \varphi(\hat x, v(\hat x)) \nonumber \\
                                   \le & \frac{|\hat x - \hat y|^2}{2\epsilon r} + \frac{C}{\epsilon} |\hat x-z||\hat x - \hat y| + \varphi(z,v(z))\beta(\hat x)\cdot \beta(z) \label{G xi x 1} \\
                                        & - \delta \beta_0 + 2 |\hat x - z| - \varphi(\hat x, v(\hat x)) \nonumber \\
                                   \le & -\frac{1}{2}\delta \beta_0 <0, \nonumber
\end{align}
for sufficiently small $\epsilon>0$, which leads to a contradiction with \eqref{xi vis sub}. 

If $\hat y \in \partial\Omega$, taking $\eta (y)= v(x)-\phi(x,y)$, we see that $u(y)-\eta(y)$ attains its minimum at $\hat y \in \partial\Omega$. Hence, from the definition of viscosity supersolution, we have 
\begin{equation}\label{eta vis super}
\beta(\hat y)\cdot D_y\eta(\hat y) - \varphi(\hat y, u(\hat y))\le 0. 
\end{equation}
By similar calculations as in \eqref{G xi x}, \eqref{tricky term} and \eqref{G xi x 1}, and using \eqref{v>u} and the monotonicity of $\varphi$, we have
\begin{align}
       & \beta(\hat y)\cdot D_y\eta(\hat y) - \varphi(\hat y, u(\hat y)) \nonumber \\
  =   & -\frac{1}{\epsilon}a^{ij}(z)\beta_i(\hat y)(\hat y_j - \hat x_j) + \varphi(z, v(z))\beta (\hat y)\cdot \beta(z) + \delta \beta(\hat y)\cdot \nu(\hat y) - \varphi(\hat y, u(\hat y)) \label{G xi x 2} \\
\ge  & - \frac{|\hat y-\hat x|^2}{2\epsilon r} - \frac{C}{\epsilon}|\hat y-z||\hat y-\hat x|  + \varphi(z, v(z))\beta (\hat y)\cdot \beta(z) + \delta \beta_0 - \varphi(\hat y, v(\hat x)) \nonumber \\
\ge  & \frac{1}{2} \delta \beta_0 >0, \nonumber
\end{align}
for sufficiently small $\epsilon>0$, which leads to a contradiction with \eqref{eta vis super}. 

We have now proved that for small enough $\epsilon>0$, the function $\Phi(x,y)$ in \eqref{Phi x y} does not attain a maximum over $\bar \Omega \times \bar \Omega$ on $\partial (\Omega \times \Omega)$.
Then we can assume $\Phi(x,y)$ attains its maximum at $(\hat x, \hat y) \in \Omega\times \Omega$.
At the maximum point $(\hat x, \hat y)$, we have
\begin{equation}\label{max prin Phi}
D_x \Phi = D_y\Phi =0, \quad {\rm and} \ \ D^2_{x,y}\Phi \le 0,
\end{equation}
which leads to
\begin{align}
Dv(\hat x) = & \frac{1}{\epsilon}a(z)(\hat x-\hat y) + \varphi(z,v(z))\beta(z) - \delta Dd(\hat x) + 2\delta (\hat x-z), \label{key relation 1 in Th3.2} \\
Du(\hat y) = & \frac{1}{\epsilon}a(z)(\hat x-\hat y) + \varphi(z,v(z))\beta(z) + \delta Dd(\hat y), \label{key relation 2 in Th3.2}
\end{align}
and 
\begin{equation}\label{key relation 3 in Th3.2}
D^2v(\hat x)  - D^2 u(\hat y)   \le \delta [- D^2d(\hat x)  -  D^2d(\hat y) +2I].
\end{equation}
The inequality \eqref{key relation 3 in Th3.2} follows since, by \eqref{max prin Phi}, $(\xi, \xi)(D^2_{x,y}\Phi)(\xi, \xi)^T \le 0$ for any $\xi \neq 0, \in \mathbb{R}^n$. 
Then we have
\begin{align}
0   \le  & [\mathcal{F}[v]-B(\cdot, v, Dv)](\hat x) - [\mathcal{F}[u]-B(\cdot, u, Du)](\hat y) \nonumber \\
     \le  & F^{ij}(M[u](\hat y))\{[v_{ij}(\hat x)-A_{ij}(\hat x, v(\hat x), Dv(\hat x))]-[u_{ij}(\hat y)-A_{ij}(\hat y, u(\hat y), Du(\hat y))]\} \nonumber \\
           & + B(\hat y, u(\hat y), Du(\hat y)) - B(\hat x, v(\hat x), Dv(\hat x)) \nonumber \\
     \le  & F^{ij}(M[u](\hat y))\{A_{ij}(\hat y, u(\hat y), Du(\hat y))-A_{ij}(\hat x, v(\hat x), Dv(\hat x)) \label{Theta function} \\
           & + \delta [-D_{ij}d(\hat x) - D_{ij}d(\hat y) + 2\delta_{ij}]\} + B(\hat y, u(\hat y), Du(\hat y)) - B(\hat x, v(\hat x), Dv(\hat x)) \nonumber \\      
      =:  & \Theta(\hat x, \hat y),   \nonumber
\end{align}
where F2 is used to obtain the second inequality, and \eqref{key relation 3 in Th3.2} is used to obtain the third inequality. By using the mean value theorem, we have
\begin{align} 
   & A_{ij}(\hat y, u(\hat y), Du(\hat y))-A_{ij}(\hat x, v(\hat x), Dv(\hat x)) \nonumber \\
 =& [A_{ij}(\hat y, u(\hat y), Du(\hat y))-A_{ij}(\hat x, u(\hat x), Du(\hat x))] + [A_{ij}(\hat x, u(\hat x), Du(\hat x)) \nonumber \\
   & - A_{ij}(\hat x, v(\hat x), Du(\hat x))]  + [A_{ij}(\hat x, v(\hat x), Du(\hat x)) - A_{ij}(\hat x, v(\hat x), Dv(\hat x))] \nonumber \\
 =& D_xA_{ij}(\bar x, u(\hat y), Du(\hat y)) (\hat y-\hat x)  + D_zA_{ij}(\hat x, \bar z, Du(\hat y)) (u(\hat y)-v(\hat x)) \label{A MVT} \\
   & + \delta D_pA_{ij}(\hat x, v(\hat x), \bar p) [Dd (\hat x)+Dd (\hat y) - 2(\hat x-z)], \nonumber
\end{align}
where $\bar x = t_1\hat y+(1-t_1)\hat x$, $\bar z = t_2 u(\hat y)+(1-t_2)v(\hat x)$ and $\bar p=t_3 Du(\hat y)+(1-t_3)Dv(\hat x)$ for some $t_1, t_2, t_3 \in (0,1)$, \eqref{key relation 1 in Th3.2} and \eqref{key relation 2 in Th3.2} are used in the last equality.
Similarly, we get
\begin{align} 
   & B(\hat y, u(\hat y), Du(\hat y))-B(\hat x, v(\hat x), Dv(\hat x)) \nonumber \\
 =& D_xB(\tilde x, u(\hat y), Du(\hat y)) (\hat y-\hat x)  + D_zB(\hat x, \tilde z, Du(\hat y)) (u(\hat y)-v(\hat x)) \label{B MVT} \\
   & + \delta D_pB(\hat x, v(\hat x), \tilde p) [Dd (\hat x)+ Dd (\hat y) - 2(\hat x-z)], \nonumber
\end{align}
where $\tilde x = s_1\hat y+(1-s_1)\hat x$, $\tilde z = s_2 u(\hat y)+(1-s_2)v(\hat x)$, and $\tilde p=s_3 Du(\hat y)+(1-s_3)Dv(\hat x)$, for some $s_1, s_2, s_3 \in (0,1)$.
By \eqref{A MVT} and \eqref{B MVT}, we have
\begin{align}
\Theta(\hat x, \hat y) = & F^{ij}(M[u](\hat y)) [D_zA_{ij}(\hat x, \bar z, Du(\hat y)) (u(\hat y)-v(\hat x)) + R^A_{ij}(\hat x, \hat y)] \label{Theta function 1} \\
                     & + D_zB(\hat x, \tilde z, Du(\hat y)) (u(\hat y)-v(\hat x)) + R^B(\hat x, \hat y),  \nonumber
\end{align}
where
\begin{align}
R^A_{ij}(\hat x, \hat y): = & D_xA_{ij}(\bar x, u(\hat y), Du(\hat y)) (\hat y-\hat x) + \delta D_pA_{ij}(\hat x, v(\hat x), \bar p) [Dd (\hat x)+Dd (\hat y) - 2(\hat x-z)] \label{RA} \\
                     & + \delta [-D_{ij}d(\hat x) - D_{ij}d(\hat y) + 2\delta_{ij}], \nonumber \\
R^B(\hat x, \hat y)      :=  & D_xB(\tilde x, u(\hat y), Du(\hat y)) (\hat y-\hat x) + \delta D_pB(\hat x, v(\hat x), \tilde p) [Dd (\hat x)+Dd (\hat y) - 2(\hat x-z)]. \label{RB} 
\end{align}

In case (ii), we have 
\begin{equation}\label{strictly increasing A}
D_zA(x,z,p) \ge c_0 I, 
\end{equation}
for all $(x,z,p)\in \Omega \times \mathbb{R} \times \mathbb{R}^n$ and some constant $c_0>0$. 
Since condition F5 holds when conditions F1, F2 and F3 hold, we have $\mathscr{T}(r)\ge \delta_0$ for some positive constant $\delta_0$.
By \eqref{v>u} and \eqref{strictly increasing A}, since $B$ is nondecreasing in $z$, we have
\begin{align}
\Theta(\hat x, \hat y) \le & \big [ -\frac{1}{2}\theta c_0 + \sum\limits_{i,j=1}^n |R^A_{ij}(\hat x, \hat y)| \big ] \mathscr{T}(M[u](\hat y)) + R^B(\hat x, \hat y) \nonumber  \\
                                 \le & -\frac{1}{4}\theta c_0 \delta_0 +  |R^B(\hat x, \hat y)| \label{Theta function 2}\\
                                 \le & -\frac{1}{8}\theta c_0 \delta_0 < 0, \nonumber
\end{align}
where F5 is used in the second inequality, $\epsilon$ and $\delta$ are chosen sufficiently small such that $\sum\limits_{i,j=1}^n |R^A_{ij}(\hat x, \hat y)|$ and $|R^B(\hat x, \hat y)|$ can be made as small as we want.
From \eqref{Theta function} and \eqref{Theta function 2}, we get a contradiction, which implies the conclusion \eqref{comparison in Th4.2}.

In case (iii), we have 
\begin{equation}\label{strictly increasing B}
D_zB(x,z,p) \ge c_1, 
\end{equation}
for all $(x,z,p)\in \Omega \times \mathbb{R} \times \mathbb{R}^n$ and some constant $c_1>0$. For $r\in \Gamma$, we have
\begin{equation}\label{bounded trace}
\mathscr{T}(r)\le T,
\end{equation}
for some positive constant $T$.
By \eqref{v>u} and \eqref{strictly increasing B}, since $A$ is nondecreasing in $z$, we have
\begin{align}
\Theta(\hat x, \hat y) \le &  -\frac{1}{2}\theta c_1 + \sum\limits_{i,j=1}^n |R^A_{ij}(\hat x, \hat y)| \mathscr{T}(M[u](\hat y)) + R^B(\hat x, \hat y) \nonumber \\
                                 \le & -\frac{1}{2}\theta c_1 + T \sum\limits_{i,j=1}^n |R^A_{ij}(\hat x, \hat y)|  + |R^B(\hat x, \hat y)|  \label{Theta function 3} \\
                                 \le & -\frac{1}{4}\theta c_1 < 0, \nonumber
\end{align}
where \eqref{bounded trace} is used in the second inequality, $\epsilon$ and $\delta$ are chosen sufficiently small such that $\sum\limits_{i,j=1}^n |R^A_{ij}(\hat x, \hat y)|$ and $|R^B(\hat x, \hat y)|$ can be made as small as we want. From \eqref{Theta function} and \eqref{Theta function 3}, we get a contradiction, which implies the conclusion \eqref{comparison in Th4.2}.
\end{proof}

\begin{remark}
Theorem \ref{Th4.2}(i) depends on the barrier construction in Lemma \ref{Lemma 4.1} and the comparison principle in Theorem \ref{Th4.1}. The functions $\Phi(x,y)$ in \eqref{Phi x y} and $\phi(x,y)$ in \eqref{phi x y} play important roles in the proof of Theorem \ref{Th4.2}(ii)(iii) and are particularly constructed for the oblique boundary value condition by modifications of the analogous functions in \cite{CIL1992, IL1990, K2004}. 
\end{remark}

\begin{remark}
The condition that $\mathscr{T}(r)$ is bounded from above  is satisfied by the Hessian quotient operators $\mathcal F_{k,k-1}$, given by $F_{k,k-1}=\frac{S_k}{S_{k-1}}$,
 and by our degenerate operators $\mathfrak{M}_k$,  introduced in Section 4.3 of \cite{JT-oblique-I} in the respective cones $\Gamma_k$ and $\mathcal P_k$, for $1\le k \le n$.  
\end{remark}

\begin{remark}
In the definition of viscosity subsolution $v$ satisfying \eqref{xi vis sub}, the graphs of $v$ and $\xi$ do not touch at $\hat x \in \partial\Omega$. By modifying the test function by $\tilde \xi(x): = \xi (x) + (v(\hat x)-\xi(\hat x))$, the graphs of $v$ and $\tilde \xi$ can touch at $\hat x$, which corresponds to the viscosity subsolution definition in the introduction. Similarly, in the definition of viscosity supersolution $u(y)$ satisfying \eqref{eta vis super}, the graphs of $u$ and $\eta$ do not touch at $\hat y \in \partial\Omega$. Modifying the test function by $\tilde \eta(y): = \eta (y) + (u(\hat y)-\eta(\hat y))$, $u$ and $\tilde \eta$ can touch at $\hat y$, which corresponds to the viscosity supersolution definition in the introduction. 
\end{remark}

 To complete our comparison principles we now extend our solution space to be the union of the previous two cases, namely
we consider solutions in $C^{1,1}(\Omega)\cap C^{0,1}(\bar \Omega)$.   For this, we need an equivalent definition of viscosity solutions using semi-jets. Accordingly we define the semi-jets $J_K^{2,\pm}u(x)$ of $u: K\rightarrow \mathbb{R}$ at $x\in K$, and their closures $\bar J_K^{2,\pm}u(x)$ by:
\begin{align}
J_K^{2,+}u(x) := &  \{(p,X)\in \mathbb{R}^n \times \mathbb{S}^n| \   u(y)\le u(x) + p\cdot (y-x) \nonumber \\
 & + \frac{1}{2}X(y-x)\cdot (y-x) + o(|y-x|^2) \ {\rm as} \ y\in K\rightarrow x\},  \nonumber
\end{align}
\begin{align}
J_K^{2,-}u(x) := &  \{(p,X)\in \mathbb{R}^n \times \mathbb{S}^n| \   u(y)\ge u(x) + p\cdot (y-x) \nonumber \\
 & + \frac{1}{2}X(y-x) \cdot (y-x) + o(|y-x|^2) \ {\rm as} \ y\in K\rightarrow x\},  \nonumber
\end{align}
and
\begin{align}
\bar J_K^{2,\pm}u(x) := &  \{(p,X)\in \mathbb{R}^n \times \mathbb{S}^n| \  \exists x_k\in K \ {\rm and} \ (p_k, X_k)\in J_K^{2,\pm}u(x) \ {\rm such \ that} \nonumber \\
 &  (x_k, u(x_k), p_k, X_k)\rightarrow (x, u(x), p, X) \ {\rm as} \ k\rightarrow +\infty \},  \nonumber
\end{align}
where $K\subset \mathbb{R}^n$ is a domain, (which is not necessarily open). When $x\in \Omega$, it is obvious that $J_\Omega^{2,\pm}u(x) = J_{\bar \Omega}^{2,\pm}u(x)$, and $\bar J_\Omega^{2,\pm}u(x) = \bar J_{\bar \Omega}^{2,\pm}u(x)$. For convenience, for $x\in \Omega$ we simply write $J^{2,\pm}u(x)$ ($\bar J^{2,\pm}u(x)$) for $J_\Omega^{2,\pm}u(x)=J_{\bar\Omega}^{2,\pm}u(x)$ ($\bar J_\Omega^{2,\pm}u(x)=\bar J_{\bar \Omega}^{2,\pm}u(x)$).

Let $\Omega\subset \mathbb{R}^n$, $u\in C^0(\bar \Omega)$ is a viscosity  subsolution (supersolution) of the boundary value problem \eqref{1.1}-\eqref{1.2} if
\begin{equation}\label{interior vis def}
F(X-A(x, u(x), p)) \ge (\le) B(x, u(x), p),
\end{equation}
holds for $x\in \Omega$, $(p, X)\in \bar J_{\bar \Omega}^{2,+}u(x)$ ($\bar J_{\bar \Omega}^{2,-}u(x)$), and
\begin{equation}\label{boundary vis def}
G(x, u(x), p) \ge (\le) 0, \quad {\rm or} \ \ F(X-A(x, u(x), p)) \ge (\le) B(x, u(x), p),
\end{equation}
holds for $x\in \partial\Omega$, $(p, X)\in \bar J_{\bar \Omega}^{2,+}u(x)$ ($\bar J_{\bar \Omega}^{2,-}u(x)$ and $X-A(x,u(x), p)\in \Gamma$). Then $u\in C^0(\bar \Omega)$ is a viscosity solution of the problem \eqref{1.1}-\eqref{1.2} if it is both a viscosity subsolution and a viscosity supersolution of the problem \eqref{1.1}-\eqref{1.2}. If $\mathcal{F}$ satisfies F1, for $x\in \partial\Omega$, we only need the first inequality in \eqref{boundary vis def}. While if $\mathcal{F}$ satisfies F1$^-$, for $x\in \partial\Omega$, both the inequalities in \eqref{boundary vis def} are needed.

If $F$ only satisfies F1$^-$, the comparison principle in Theorem \ref{Th4.2} also holds for solutions in $C^{1,1}(\Omega)\cap C^{0,1}(\bar \Omega)$. We formulate it in the following theorem which embraces both Theorems \ref{Th4.1} and \ref{Th4.2} as special cases.
\begin{Theorem}\label{Th4.3}
Let $u, v \in C^{1,1}(\Omega)\cap C^{0,1}(\bar \Omega)$ be a supersolution and a subsolution of the oblique boundary value problem \eqref{1.1}-\eqref{1.5} respectively, $\Omega\subset \mathbb{R}^n$ with $\partial\Omega\in C^2$. Assume that $F$ satisfies conditions F1$^-$, F2 and F3 in the cone $\Gamma\subset \mathbb{S}^n$, $B\ge a_0$ and $\varphi\in C^0(\partial \Omega \times \mathbb{R})$. 
Assume also that $A, B$ and $\varphi$ are nondecreasing in $z$.
Assume further that either {\rm (i)}  or {\rm (ii)} or {\rm (iii)} in Theorem \ref{Th4.2} holds. Then the comparison assertion \eqref{comparison in Th4.2} holds.
\end{Theorem}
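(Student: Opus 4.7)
The plan is to mirror the proof of Theorem \ref{Th4.2}, substituting the semi-jet formulation of viscosity solutions---recalled just above the statement---for pointwise second derivatives, and invoking the Crandall--Ishii lemma \cite{CIL1992} where Theorem \ref{Th4.2} used classical differentiation. Since $u, v \in C^{1,1}(\Omega) \cap C^{0,1}(\bar\Omega)$ the doubled function retains Lipschitz regularity, which is precisely the data required by the Crandall--Ishii machinery, while F1$^-$ (rather than F1) forces a treatment of both alternatives in the boundary viscosity condition. Thus the theorem is proved by decomposing along the three hypotheses (i), (ii), (iii) and adapting each piece of the proof of Theorem \ref{Th4.2} to the weaker regularity and monotonicity.

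For case (i), I would first invoke Theorem \ref{Th4.1}---which already treats $C^{1,1}$ super- and subsolutions under F1$^-$---to reduce to $\sup_{\Omega}(v-u) \le \sup_{\partial\Omega}(v-u)^+$. Assuming for contradiction that the right-hand side is positive and attained at some $z \in \partial\Omega$, I would set up the doubled function $\Phi(x,y)$ as in \eqref{Phi x y}--\eqref{phi x y}, centered at $z$. The boundary analysis of Theorem \ref{Th4.2}, using the exterior sphere estimate \eqref{EUESC} and the tricky-term estimate \eqref{tricky term}, rules out $\hat x \in \partial\Omega$ or $\hat y \in \partial\Omega$ provided the $G$-alternative of the viscosity boundary condition is active at the doubled max, and the strict $z$-monotonicity of $\varphi$ supplies the decisive sign in \eqref{G xi x 1} and \eqref{G xi x 2}. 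If instead the $F$-alternative is active at $\hat x$ or $\hat y$, I would perturb by $\tau\tilde\eta$, with $\tilde\eta$ the barrier from Lemma \ref{Lemma 4.1}; exactly as in the proof of Theorem \ref{Th4.1}, this drives the effective maximum into $\Omega$, where the paired semi-jets delivered by Crandall--Ishii together with the barrier inequality \eqref{barrier inequality} produce the contradiction.

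For cases (ii) and (iii) the doubling argument of Theorem \ref{Th4.2} transfers with only notational adjustments. The boundary analysis excluding $\hat x \in \partial\Omega$ or $\hat y \in \partial\Omega$ uses only Lipschitz bounds and obliqueness, so it carries over verbatim under F1$^-$ (the viscosity boundary condition yields \eqref{xi vis sub} or \eqref{eta vis super} in either alternative, and in each case the contradictions \eqref{G xi x 1}, \eqref{G xi x 2} persist). At an interior doubled max $(\hat x, \hat y) \in \Omega \times \Omega$, I would apply the Crandall--Ishii lemma with a small parameter $\iota > 0$ to obtain $(D_x\phi(\hat x,\hat y), X) \in \bar J^{2,+}v(\hat x)$ and $(-D_y\phi(\hat x,\hat y), Y) \in \bar J^{2,-}u(\hat y)$ with the matrix inequality implying $X - Y \le 2\delta[-D^2d + I] + o_\iota(1)$. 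The chain \eqref{Theta function}--\eqref{RB} then proceeds unchanged, with $F^{ij} \in \partial^+ F(M[u](\hat y))$ in place of classical derivatives---well defined by F2 and \eqref{supdif Fij}---and the strict $z$-monotonicity of $A$, combined with F5 (which follows from F1$^-$, F2, F3) in case (ii), or of $B$ combined with the assumed boundedness of $\mathscr T$ in case (iii), closes the argument as in \eqref{Theta function 2} and \eqref{Theta function 3}.

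The principal obstacle is case (i) under F1$^-$, where the $F$-alternative in the boundary viscosity condition cannot be ruled out by the $G$-inequality argument alone: the proof of Theorem \ref{Th4.2}(i) exploited F1 precisely to suppress this alternative. The resolution is the twofold use of Lemma \ref{Lemma 4.1}---both to reduce to a boundary maximum via Theorem \ref{Th4.1} and, through the $\tau\tilde\eta$ perturbation, to convert any residual $F$-alternative on $\partial\Omega$ into an interior violation of the strict subsolution's barrier. A secondary bookkeeping issue is confirming that the Crandall--Ishii matrix inequality and the set-valued choice of $F^{ij} \in \partial^+F$ are compatible along the full doubled computation, but this is immediate from F2 and the monotonicity of $F$ under matrix ordering. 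All other steps are minor variants of arguments already present in Theorems \ref{Th4.1} and \ref{Th4.2}.
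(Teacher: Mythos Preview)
Your architecture matches the paper's: doubling of variables plus Ishii's lemma, with the $G$-alternative at a boundary maximum ruled out by the $-\delta(d(x)+d(y))$ term and obliqueness, and the barrier from Lemma~\ref{Lemma 4.1} supplying the strict sign in case (i). Two points need correction, however, and the first is a genuine gap.

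In case (i) the barrier does \emph{not} ``drive the effective maximum into $\Omega$'', and a two-stage plan (double first, perturb only if the $F$-alternative survives) is not how the argument closes. The paper perturbs at the outset, setting $\tilde u := u - \tau\tilde\eta$ and doubling $\tilde\Phi(x,y) = v(x) - \tilde u(y) - \phi(x,y)$. The maximum $(\hat x,\hat y)$ of $\tilde\Phi$ may still have $\hat x$ or $\hat y$ on $\partial\Omega$; what is shown is only that the $G$-alternative fails there, so the $F$-alternative holds regardless of whether the points are interior or boundary. (Incidentally, that exclusion of the $G$-alternative uses only nondecreasing $\varphi$ together with the $\delta d$-terms and obliqueness, as in \eqref{G xi x 1}--\eqref{G xi x 2}; the strict monotonicity of $\varphi$ is \emph{not} what supplies the decisive sign.) The barrier enters instead through the passage from the semi-jet of $\tilde u$ to that of $u$: from $(p^{\tilde u},Y^{\tilde u})\in\bar J^{2,-}_{\bar\Omega}\tilde u(\hat y)$ one computes explicitly $(p^u,Y^u)\in\bar J^{2,-}_{\bar\Omega}u(\hat y)$ via the relation $\tilde u = u - \tau\tilde\eta$, and the resulting comparison of the two $F$-inequalities produces a term $\tilde\Theta(\hat x,\hat y)$ which is precisely the barrier inequality \eqref{barrier inequality} in viscosity form. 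It is this term, not any interior-versus-boundary dichotomy, that contributes the strictly negative $-\tau\delta_2(1+\mathscr T)$ closing the contradiction. If you tried to implement the perturbation after the fact to force an interior maximum, you would find that it does no such thing.

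The same clarification applies to cases (ii) and (iii): the boundary points $\hat x,\hat y$ are not excluded; rather the $G$-alternative is excluded at them, leaving the $F$-alternative, which then feeds into the $\Theta$-computation exactly as at interior points. With these corrections your sketch coincides with the paper's proof.
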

\begin{proof}
We now use the viscosity notions \eqref{interior vis def} and \eqref{boundary vis def}. We first consider the case (i). Let
\begin{equation}\label{tilde u}
\tilde u:= u-\tau \tilde \eta,
\end{equation} 
where $\tau>0$ is a constant, $\tilde\eta=\eta - \sup_{\Omega}\eta$, and $\eta=e^{K(\underline u-u)}$ is the barrier function in Lemma \ref{Lemma 4.1}. From \eqref{reduction to boundary tilde} in Theorem \ref{Th4.1}, we have
\begin{equation}
\sup_{\Omega}(v-\tilde u) \le \sup_{\partial\Omega} (v-\tilde u)^+,
\end{equation}
for sufficiently small $\tau >0$. Consequently, we can assume that $v-\tilde u$ attains its positive maximum at $z\in \partial\Omega$, namely $v(z)-\tilde u(z)=\max_{\bar\Omega}(v-\tilde u):=\theta >0$.
We consider the function
\begin{equation}\label{tilde Phi x y}
\tilde \Phi (x,y) : = v(x) - \tilde u(y) - \phi(x,y), 
\end{equation}
where $\phi(x,y)$ is the function defined in \eqref{phi x y}.
For $\epsilon>0$, let $(x_\epsilon, y_\epsilon)$ be a maximum point of $\tilde \Phi(x,y)$. Since $v(x)-\tilde u(x)- \delta |x-z|^2$ has $x=z$ as a unique maximum point for $\delta>0$, we have 
\begin{equation}\label{convergence in Th3.3}
x_\epsilon \rightarrow z, \quad \frac{1}{\epsilon} a^{ij}(z) ({x_\epsilon}-{y_\epsilon})_i({x_\epsilon}-{y_\epsilon})_j \rightarrow 0, \quad v(x_\epsilon)\rightarrow v(z), \quad  \tilde u(y_\epsilon)\rightarrow \tilde u(z),
\end{equation}
as $\epsilon\rightarrow 0$.
For simplicity, we write $(\hat x, \hat y)$ for $(x_\epsilon, y_\epsilon)$. 
Since \eqref{convergence in Th3.3} holds, by taking sufficiently small $\epsilon$ and $\delta$, we have
\begin{equation}\label{v>u in Th3.3}
v(\hat x) - u(\hat y) \ge v(\hat x) - \tilde u(\hat y) \ge \frac{\theta}{2} >0.
\end{equation}
At the point $(\hat x, \hat y)$,
by Lemma 3.6 (Ishii's Lemma) and Proposition 2.7 in \cite{K2004}, there exists $X, Y\in \mathbb{S}^n$ such that
\begin{align}
&(p^v, X^v):=(D_x\phi(\hat x, \hat y), X-\delta D^2d(\hat x) + 2\delta I) \in \bar J^{2,+}_{\bar \Omega}v(\hat x), \label{semi-jets v} \\
&(p^{\tilde u}, Y^{\tilde u}):=(- D_y\phi(\hat x, \hat y), Y+\delta D^2d(\hat y)) \in \bar J^{2,-}_{\bar \Omega}\tilde u(\hat y), \label{semi-jets tilde u}
\end{align}
and
\begin{equation}\label{a a a a}
\left (\begin{array}{cc} X & 0 \\ 0 & -Y  \end{array}\right )
\le
\frac{1}{\epsilon} \left (\begin{array}{cc} a(z)+[a(z)]^2 & -a(z)-[a(z)]^2 \\ -a(z)-[a(z)]^2 & a(z)+[a(z)]^2  \end{array}\right ),
\end{equation}
where the matrix $a(z)$ is defined in \eqref{relation between beta nu}.
The form of the matrix on the right hand side of \eqref{a a a a} is obtained by taking $\mu=2/\epsilon$ in Lemma 3.6 in \cite{K2004}.
Note that \eqref{a a a a} implies $X\le Y$. 
Since $\tilde u = u-\tau [e^{K(\underline u-u)} -\sup_\Omega e^{K(\underline u-u)}]$,  we can calculate from \eqref{semi-jets tilde u} that
\begin{equation}
(p^u, Y^u) \in \bar J^{2,-}_{\bar \Omega} u(\hat y),
\end{equation}
where
\begin{align}
p^u = &
\frac{1}{1+\tau K \eta(\hat y)} [-D_y\phi(\hat x, \hat y)+\tau K\eta(\hat y)D\underline u(\hat y)], \label{pu}\\
Y^u = &
\frac{1}{1+\tau K \eta(\hat y)} [Y+ \delta D^2d(\hat y) + \tau K \eta(\hat y) D^2\underline u(\hat y)) \label{Yu}\\
& + \frac{\tau K^2\eta(\hat y)}{(1+\tau K\eta(\hat y))^2} (D\underline u(\hat y)+D_y\phi(\hat x, \hat y))\otimes (D\underline u(\hat y)+D_y\phi(\hat x, \hat y))]. \nonumber
\end{align}
Thus, if $\hat x\in \partial\Omega$, by the definition of viscosity subsolution $v$ under F1$^-$, we have
\begin{equation}\label{v boundary inequality}
\beta(\hat x)\cdot p^v - \varphi(\hat x, v(\hat x)) \ge 0,
\end{equation}
or
\begin{equation}\label{v X inequ}
F(X^v-A(\hat x, v(\hat x), p^v)) \ge B(\hat x, v(\hat x), p^v).
\end{equation}
Observing that $D_x\xi(\hat x)$ in \eqref{G xi x 1} is equal to $p^v$ in \eqref{v boundary inequality}, we get a contradiction from \eqref{G xi x 1} and \eqref{v boundary inequality}.
Therefore, the only possible case is \eqref{v X inequ}. Note that if $\hat x\in \Omega$, the inequality \eqref{v X inequ} holds directly from the definition of the viscosity subsolution $v$.
Similarly, if $\hat y\in \partial\Omega$, by the definition of viscosity subsolution $u$ under F1$^-$, we have
\begin{equation}\label{u boundary inequality pu}
\beta(\hat y)\cdot p^u - \varphi(\hat y, u(\hat y)) \le 0,
\end{equation}
or
\begin{equation}\label{u Yu inequ}
F(Y^u-A(\hat y, u(\hat y), p^u)) \le B(\hat y, u(\hat y), p^u).
\end{equation}
Plugging \eqref{pu} into \eqref{u boundary inequality pu}, we have
\begin{equation}\label{u boundary inequality pu 1}
\beta(\hat y)\cdot p^u - \varphi(\hat y, u(\hat y)) \le \tau K \eta(\hat y) |\beta (\hat y) \cdot D\underline u(\hat y) - \varphi(\hat y, u(\hat y))|.
\end{equation}
Since $D_y\eta(\hat y)$ in \eqref{G xi x 2} is equal to $p^u$ in \eqref{u boundary inequality pu 1}, by choosing $\tau$ sufficiently small, we also get a contradiction from \eqref{G xi x 2} and \eqref{u boundary inequality pu 1}. Therefore, the only possible case is \eqref{u Yu inequ}. Note that if $\hat y\in \Omega$, the inequality \eqref{u Yu inequ} holds directly from the definition of the viscosity supersolution $u$.
Using \eqref{v X inequ} and \eqref{u Yu inequ}, we have
\begin{align}
0 \le & [F(X^v-A(\hat x, v(\hat x), p^v)) - B(\hat x, v(\hat x), p^v)] \nonumber \\
        & - [F(Y^u-A(\hat y, u(\hat y), p^u)) - B(\hat y, u(\hat y), p^u)] \nonumber \\
   \le & F^{ij} [X^v_{ij} - Y^u_{ij}+A_{ij}(\hat y, u(\hat y), p^u)-A_{ij}(\hat x, v(\hat x), p^v)]  \nonumber \\
        & +  B(\hat y, u(\hat y), p^u) - B(\hat x, v(\hat x), p^v) \nonumber \\
   \le & F^{ij} (X^v_{ij} - Y^u_{ij}) - F^{ij}[ A_{ij}(\hat x, u(\hat y), p^v) - A_{ij}(\hat y, u(\hat y), p^u)]  \label{0 le Theta} \\
        & - [ B(\hat x, u(\hat y), p^v) - B(\hat y, u(\hat y), p^u) ] \nonumber \\  
   =:  & \Theta(\hat x, \hat y),   \nonumber      
\end{align}
where F2 is used in the second inequality with $F^{ij}:=F^{ij}(Y^u-A(\hat y, u(\hat y), p^u))$, the monotonicity of $A, B$ and \eqref{v>u in Th3.3} are used in the third inequality. By further calculations, we have
\begin{align}
    \Theta(\hat x, \hat y)=  & F^{ij} (X^v_{ij} - Y^u_{ij}) - [F^{ij}D_{p_k}A_{ij}(\hat y, u(\hat y), p^u) + D_{p_k}B(\hat y, u(\hat y), p^u)] (p^v_k - p^u_k) \nonumber \\ 
        & - F^{ij} [A_{ij}(\hat y, u(\hat y), p^v)-A_{ij}(\hat y, u(\hat y), p^u)-D_{p_k}A_{ij}(\hat y, u(\hat y), p^u)(p^v_k - p^u_k)] \nonumber \\
        & -[B(\hat y, u(\hat y), p^v)-B(\hat y, u(\hat y), p^u)-D_{p_k}B(\hat y, u(\hat y), p^u)(p^v_k - p^u_k)] \nonumber \\
        & - F^{ij}[A_{ij}(\hat x, u(\hat y), p^v) - A_{ij}(\hat y, u(\hat y), p^v)] - [B(\hat x, u(\hat y), p^v) - B(\hat y, u(\hat y), p^v)] \nonumber \\
    =  &  F^{ij} (X^v_{ij} - Y^u_{ij}) - [F^{ij}D_{p_k}A_{ij}(\hat y, u(\hat y), p^u) + D_{p_k}B(\hat y, u(\hat y), p^u)] (p^v_k - p^u_k) \label{Theta} \\ 
        & - \frac{1}{2} [F^{ij}D_{p_kp_l}A_{ij}(\hat y, u(\hat y), \tilde p)+D_{p_kp_l}B(\hat y, u(\hat y), \bar p)]  (p^v_k - p^u_k)  (p^v_l - p^u_l)  \nonumber \\ 
        & -[F^{ij}D_xA_{ij}(\tilde x, u(\hat y), p^v) + D_xB(\bar x, u(\hat y), p^v)](\hat x - \hat y) \nonumber \\
   \le &  F^{ij} (X^v_{ij} - Y^u_{ij}) - [F^{ij}D_{p_k}A_{ij}(\hat y, u(\hat y), p^u) + D_{p_k}B(\hat y, u(\hat y), p^u)] (p^v_k - p^u_k) \nonumber \\
        & + C(1+\mathscr{T})[|p^v - p^u|^2 + |\hat x - \hat y|],  \nonumber     
\end{align}
where Taylor's formula and mean value theorem are used in the last equality with $\tilde p=tp^u+(1-t)p^v$, $\bar p=sp^u+(1-s)p^v$, $\tilde x=t^\prime \hat x+(1-t^\prime)\hat y$ and $\bar x=s^\prime \hat x+(1-s^\prime)\hat y$ for some $t, s, t^\prime, s^\prime \in (0,1)$, the constant $C$ depends on $\|A\|_{C^2(\bar \Omega \times \mathbb{R} \times \mathbb{R}^n)}$ and $\|B\|_{C^2(\bar \Omega \times \mathbb{R} \times \mathbb{R}^n)}$.
By calculations, we have
\begin{align}
D_x\phi(\hat x, \hat y) = & \frac{1}{\epsilon}a(z)(\hat x-\hat y) + \varphi(z,v(z))\beta(z) - \delta Dd(\hat x) + 2\delta (\hat x-z),  \label{phi x}\\
-D_y\phi(\hat x, \hat y) = & \frac{1}{\epsilon}a(z)(\hat x-\hat y) + \varphi(z,v(z))\beta(z) + \delta Dd(\hat y). \label{-phi y}
\end{align}
Observing the forms of $p^v, p^{\tilde u}$ in \eqref{semi-jets v} and \eqref{semi-jets tilde u}, together with \eqref{phi x} and \eqref{-phi y}, since $v(x)$ and $\tilde u(y)$ are Lipschitz continuous functions, we see that
\begin{equation}\label{using Lipschitz 1}
\frac{1}{\epsilon}a(z)(\hat x-\hat y)
\end{equation} 
is bounded independently of $\epsilon$, and consequently, $D_x\phi(\hat x, \hat y)$ in \eqref{phi x} and $-D_y\phi(\hat x, \hat y)$ in \eqref{-phi y} are bounded independently of $\epsilon$. Thus, there exists a positive constant $\kappa$ independent of $\epsilon$, such that
\begin{equation}\label{using Lipschitz 2}
|\hat x - \hat y| \le \kappa \epsilon.
\end{equation}
Using $p^v, p^u$ in \eqref{semi-jets v} and \eqref{pu}, we have
\begin{equation}\label{pvpu}
p^v - p^u = - \frac{\tau K \eta(\hat y)}{1+\tau K \eta(\hat y)} [D\underline u(\hat y) + D_y\phi(\hat x, \hat y)] - \delta [Dd(\hat x)+Dd(\hat y)-2(\hat x-z)]. \\
\end{equation}
Then, we have the estimate
\begin{equation}\label{pvpu2}
|p^v - p^u|^2  \le  C(\tau^2 + \delta), 
\end{equation}
for $\delta\in (0,1)$, where the constant $C$ depends on $K, \eta, \varphi, \beta, \Omega$ and $\|\underline u\|_{C^1(\bar \Omega)}$. Note that in the process of estimating \eqref{pvpu2}, the facts \eqref{using Lipschitz 1} and \eqref{using Lipschitz 2} have been taken into account. Plugging $X^v$, $Y^u$, and the inequlalities \eqref{using Lipschitz 2}, \eqref{pvpu}, \eqref{pvpu2} into \eqref{Theta}, and using $X\le Y$, we have
\begin{align}
\Theta(\hat x, \hat y) \le &  \tilde \Theta(\hat x, \hat y) - \delta F^{ij}[D_{ij}d(\hat x) + D_{ij}d(\hat y) - 2\delta_{ij}]  \label{Theta tilde Theta} \\
                                      & + \delta [F^{ij}D_{p_k}A_{ij}(\hat y, u(\hat y), p^u) + D_{p_k}B(\hat y, u(\hat y), p^u)] [D_kd(\hat x) + D_k d(\hat y) - 2(\hat x -z)_k]  \nonumber \\
                                      &  + C(1+\mathscr{T})[|p^v - p^u|^2 + |\hat x - \hat y|]  \nonumber \\
                                 \le &  \tilde \Theta(\hat x, \hat y) + C(\tau^2 + \delta + \epsilon) (1+ \mathscr{T}),  \nonumber    
\end{align}
for $\delta\in (0,1)$ and a further constant $C$, where
\begin{align}
\tilde \Theta(\hat x, \hat y) = & -\frac{\tau K \eta(\hat y)}{1+\tau K \eta(\hat y)} \big \{ F^{ij} [D_{ij}\underline u(\hat y) - Y_{ij} - \delta D_{ij}d(\hat y) \label{Theta 0} \\
                                    & + \frac{K}{[1+\tau K \eta(\hat y)]^2} (D_i\underline u(\hat y) + D_{y_i}\phi(\hat x, \hat y)) (D_j\underline u(\hat y) + D_{y_j}\phi(\hat x, \hat y)) ] \nonumber \\
                                    & - [F^{ij}D_{p_k}A_{ij}(\hat y, u(\hat y), p^u) + D_{p_k}B(\hat y, u(\hat y), p^u)]  [D_k\underline u(\hat y) + D_{y_k}\phi(\hat x, \hat y)] \big \}. \nonumber
\end{align}
We claim that
\begin{equation}\label{claim}
\tilde \Theta (\hat x, \hat y) \le -\tau \delta_2 (1+\mathscr{T}),
\end{equation}
for some positive constant $\delta_2$. From \eqref{Theta tilde Theta} and \eqref{claim}, by choosing $\tau \le \delta_2/(2C)$ and $\delta + \epsilon \le \tau \delta_2/ (4C)$, we have
\begin{equation}\label{Theta < 0}
\Theta (\hat x, \hat y) \le [-\tau \delta_2 + C(\tau^2 + \delta + \epsilon) ] (1+ \mathscr{T}) \le - \frac{\tau\delta_2}{4} (1+ \mathscr{T}) <0.
\end{equation}
Then \eqref{0 le Theta} and \eqref{Theta < 0} lead to a contradiction.

The remaining task is to prove the claim \eqref{claim}. First, it is readily checked that
\begin{align}
\tilde \Theta(\hat x, \hat y) = & - \tau K \eta(\hat y) \{F^{ij}[D_{ij}\underline u(\hat y) - Y_{ij}^u + K(D_i\underline u(\hat y) - p^u_i)(D_j\underline u(\hat y) - p^u_j)]   \label{Theta 1} \\
                                             & - [F^{ij}D_{p_k}A_{ij}(\hat y, u(\hat y), p^u) + D_{p_k}B(\hat y, u(\hat y), p^u)] (D_k\underline u(\hat y) - p^u_k)) \}.  \nonumber
\end{align}
Since $\underline u\in C^{2}(\bar \Omega)$ is a strictly subsolution of equation \eqref{1.1}, there exist positive constants $\bar \delta$ and $\bar \delta^\prime$ such that
\begin{equation}\label{strictly subsolution}
F(M[\underline u]-\bar \delta I) \ge B(\hat y, \underline u(\hat y), D\underline u(\hat y)) + \bar \delta^\prime.
\end{equation}
Using the monotonicity of $A$ and $B$ in $z$, $\underline u\ge u$ in $\Omega$, and the concavity condition F2, we have
\begin{align}
    & - \frac{1}{\tau K \eta(\hat y)} \tilde \Theta(\hat x, \hat y) \nonumber \\
\ge  & \bar \delta \mathscr{T} + F^{ij}\{ [D_{ij}\underline u(\hat y) - A_{ij}(\hat y, \underline u(\hat y), D\underline u(\hat y)) - \bar \delta \delta_{ij}]- [Y_{ij}^u-A_{ij}(\hat y, \underline u(\hat y), p^u)]  \} \nonumber \\
    & - B(\hat y, \underline u(\hat y), D\underline u(\hat y)) + B(\hat y, \underline u(\hat y), p^u) + KF^{ij}(D_i\underline u(\hat y) - p^u_i)(D_j\underline u(\hat y) - p^u_j)   \nonumber \\
    & + F^{ij}[ A_{ij}(\hat y, u(\hat y), D\underline u(\hat y)) -A_{ij}(\hat y, u(\hat y), p^u) - D_{p_k}A_{ij}(\hat y, u(\hat y), p^u) (D_k\underline u(\hat y) - p^u_k) ] \nonumber \\
    & + [B(\hat y, u(\hat y), D\underline u(\hat y)) - B(\hat y, u(\hat y), p^u) - D_{p_k}B(\hat y, u(\hat y), p^u) (D_k\underline u(\hat y) - p^u_k)]  \nonumber \\
\ge & \bar \delta \mathscr{T} + [F(M[\underline u]-\bar \delta I)-B(\hat y, \underline u(\hat y), D\underline u(\hat y))] - [F(Y^u - A(\hat y, u(\hat y), p^u))-B(\hat y, \underline u(\hat y), p^u) ] \label{estimate tilde Theta} \\
      & + \frac{1}{2} [F^{ij}D_{p_kp_l}A_{ij}(\hat y, u(\hat y), \tilde p)+D_{p_kp_l}B(\hat y, u(\hat y), \bar p)](D_k\underline u(\hat y) - p^u_k)(D_l\underline u(\hat y) - p^u_l) \nonumber \\
      & + KF^{ij}(D_i\underline u(\hat y) - p^u_i)(D_j\underline u(\hat y) - p^u_j) \nonumber  \\
\ge & (\bar \delta - \frac{\bar \lambda \epsilon_0}{2}|D\underline u(\hat y)-p^u|^2) \mathscr{T} + (K-\frac{\bar \lambda}{2\epsilon_0}) F^{ij} (D_i\underline u(\hat y) - p^u_i)(D_j\underline u(\hat y) - p^u_j) + \bar \delta^\prime,  \nonumber
\end{align}
for any positive constants $\epsilon_0\in (0, 1]$ and $K$, and a non-negative function $\bar \lambda \in C^0(\bar \Omega\times \mathbb{R} \times \mathbb{R}^n)$, where Taylor's formula is used in the second equality with $\tilde p=tD\underline u(\hat y)+(1-t)p^u$ and $\bar p=sD\underline u(\hat y)+(1-s)p^u$ for some $t, s\in (0,1)$, \eqref{strictly subsolution}, \eqref{u Yu inequ}, regularity of $A$, and convexity of $B$ in $p$ are used to obtain the last inequality. Note that here we use the inequality (2.3) in \cite{JT-oblique-II} for the regular condition of $A$, namely
\begin{equation}\label{equi regular}
F^{ij} (D_{p_kp_l}A_{ij} )\eta_k \eta_l \ge \bar \lambda (\epsilon_0 \mathscr{T}|\eta|^2 + \frac{1}{\epsilon_0}F^{ij}\eta_i \eta_j),
\end{equation}
for any non-negative symmetric matrix $\{F^{ij}\}$, $\eta \in \mathbb{R}^n$ and $\epsilon_0\in (0, 1]$, where $\bar \lambda \in C^0(\bar \Omega\times \mathbb{R} \times \mathbb{R}^n)$ is a non-negative function.
By successively fixing $\epsilon_0 \le \bar \delta / {\sup_\Omega (\bar \lambda |D\underline u -p^u|)^2}$ and $K\ge (\sup_\Omega \bar \lambda)/ (2\epsilon_0)$ in \eqref{estimate tilde Theta}, we have
\begin{equation}
- \frac{1}{\tau K \eta(\hat y)} \tilde \Theta(\hat x, \hat y) \ge \frac{\bar \delta_0}{2}  \mathscr{T}  +  \bar \delta^\prime,
\end{equation}
which leads to the claim \eqref{claim} by taking $\delta_2 = \min \{\frac{\bar \delta_0}{2}, \bar \delta^\prime\} K\inf_\Omega \eta$. We have finished the verification of claim \eqref{claim} and completed the proof of Theorem \ref{Th4.3} in case (i).

Next, we consider the cases (ii) and (iii).
When either $A$ or $B$ is strictly increasing in $z$, we observe that for $u, v \in C^{1,1}(\Omega)\cap C^{0,1}(\bar \Omega)$, $v-u$ only attains its positive maximum at a point $z\in \partial\Omega$, which is implied by using Bony maximum principle and similar argument in \eqref{max F Th3.2}. Then we consider the function $\Phi(x,y)$ in \eqref{Phi x y} with $\phi(x,y)$ defined in \eqref{phi x y}, (which corresponds to $\tilde \Phi(x,y)$ in \eqref{tilde Phi x y} with $\tau=0$).
Assuming that $\Phi(x,y)$ takes its maximum at $(\hat x, \hat y)$,
by Ishii's Lemma, there exists $X, Y\in \mathbb{S}^n$ such that \eqref{semi-jets v}, \eqref{semi-jets tilde u} with $\tilde u$ replaced by $u$, and \eqref{a a a a} hold. 
Note that \eqref{a a a a} implies $X\le Y$. Thus, if $\hat x\in \partial\Omega$, by the definition of viscosity subsolution $v$ under F1$^-$, we have the ineqaulity \eqref{v boundary inequality} or \eqref{v X inequ}.
From \eqref{G xi x 1}, the only possible case is \eqref{v X inequ}. Note that if $\hat x\in \Omega$, the inequality \eqref{v X inequ} holds directly from the definition of the viscosity subsolution $v$.
Similarly, if $\hat y\in \partial\Omega$, by the definition of viscosity subsolution $u$ under F1$^-$, then either \eqref{u boundary inequality pu} with $p^u=- D_y\phi(\hat x, \hat y)$ or \eqref{u Yu inequ} with $p^u=- D_y\phi(\hat x, \hat y)$ and $Y^u=Y+\delta D^2d(\hat y)$ holds.
In view of \eqref{G xi x 2}, the only possible case is the latter case. Note that if $\hat y\in \Omega$, the inequality \eqref{u Yu inequ} holds directly from the definition of the viscosity supersolution $u$. 

Using the inequalities \eqref{v X inequ} and \eqref{u Yu inequ}, and following similar calculations from \eqref{v>u} to \eqref{RB}, together with \eqref{strictly increasing A} and \eqref{Theta function 2} in case (ii), \eqref{strictly increasing B}, \eqref{bounded trace} and \eqref{Theta function 3} in case (iii), with $Dv(\hat x), Du(\hat y), D^2v(\hat x)$ and $D^2u(\hat y)$ replaced by $p^v=D_x\phi(\hat x, \hat y), p^u=- D_y\phi(\hat x, \hat y), X^v=X-\delta D^2d(\hat x) + 2\delta I$ and $Y^u=Y+\delta D^2d(\hat y)$ respectively,  we then get contradictions and complete the proof of Theorem \ref{Th4.3} in cases (ii) and (iii).
\end{proof}

\begin{remark}
In fact, the inequality in claim \eqref{claim} is just the barrier inequality \eqref{barrier inequality} in the viscosity sense. 
When $u$ is $C^2$ at $\hat y$, we have in \eqref{Theta 1},
\begin{equation}
\tilde \Theta(\hat x, \hat y) = - \tau \mathcal{L} \tilde \eta(\hat y),
\end{equation}
where the operator $\mathcal{L}$ is the same as that in the barrier inequality \eqref{barrier inequality} in Lemma \ref{Lemma 4.1}.
\end{remark}

\begin{remark}
Note that the key technique of doubling variables and penalization used in the proof of Theorems \ref{Th4.3} is the Ishii's Lemma in \cite{K2004}, which has its origins in \cite{Ishii1989, IL1990}, (see also Theorem 3.2 in \cite{CIL1992}).
\end{remark}

\begin{remark}
From the existence theorems, the solution $u$ of the oblique boundary value problem \eqref{1.1}-\eqref{1.5} will be natural in the class $C^2(\Omega)\cap C^{0,1}(\bar \Omega)$ ($C^{2,\alpha}(\Omega)\cap C^{0,1}(\bar \Omega)$) under F1, and in the class $C^{1,1}(\Omega)\cap C^{0,1}(\bar \Omega)$ under F1$^-$, respectively. Therefore, in Theorems \ref{Th4.2} and \ref{Th4.3}, we study the comparison principles for solutions in  $C^2(\Omega)\cap C^{0,1}(\bar \Omega)$ and $C^{1,1}(\Omega)\cap C^{0,1}(\bar \Omega)$ respectively. In fact, by proper adjustment of the proof, the conclusion in Theorem \ref{Th4.3} can also hold for $u, v\in C^{0,1}(\bar \Omega)$ in case (i) and $u, v \in C^{0}(\bar \Omega)$ in cases (ii) and (iii), which will be taken up in a sequel. Moreover when $A$ is strictly regular, $C^{0,1}(\Omega)$ viscosity solutions are $C^{1,1}$ smooth in the degenerate case and $C^{2,\alpha}$ smooth in the non-degenerate case. 
\end{remark}

\begin{remark}
The comparison principles in Theorems \ref{Th4.2} and \ref{Th4.3} are proved in cases (i), (ii) and (iii) respectively. 
The assumptions in cases (ii) and (iii) correspond to the assumptions in Theorem VI.3 in \cite{IL1990}, see also \cite{Tru1988, Ishii1991, CIL1992, B1993}.
The study of the comparison principle in case (i) when $\varphi$ is strictly increasing in $z$ is missing in the literature; here we prove it by making full use of the barrier in \cite{JT-oblique-II} with the help of the strict subsolution.
\end{remark}


With these comparison principles, we are ready to complete the proof of Theorem \ref{Th1.1}.

\begin{proof}[Proof of Theorem \ref{Th1.1}]
The existence in assertion (i) has already been proved in Section \ref{Section 3}.
If $A$ is strictly increasing in $z$, the uniqueness for solutions in $C^{1,1}(\Omega)\cap C^{0,1}(\bar \Omega)$ of the boundary value problem \eqref{1.1}-\eqref{1.5} follows from case (ii) of Theorem \ref{Th4.3}. While if $A$ and $B$ are independent of $z$, and $\varphi$ is strictly increasing in $z$, taking account of Remark \ref{Rem about subsol}, the uniqueness for solutions in $C^{1,1}(\Omega)\cap C^{0,1}(\bar \Omega)$ of the boundary value problem \eqref{1.1}-\eqref{1.5} follows from case (i) of Theorem \ref{Th4.3}. We thus complete the proof of the uniqueness result in assertion (ii) of Theorem \ref{Th1.1}.

From the uniqueness, the local regularity result in assertion (iii) is immediate. In fact, under F1$^-$, when $\Gamma \subset \mathcal P_{n-1}$ and $\partial\Omega$ is uniformly $(\Gamma, A, G)$-convex at $x_0\in \partial\Omega$ with respect to $u$, the assumptions for the local second derivative estimate in Theorem \ref{Th2.1} are also satisfied for $u_\epsilon$ of the regularized problem \eqref{re-problem}. By \eqref{est int bdy} in Theorem \ref{Th2.1}, we have
\begin{equation}\label{uni N bd}
\sup_{\mathcal{N}\cap \Omega} |D^2 u_\epsilon| \le C,
\end{equation}
where $\mathcal{N}=B_{\theta R}(x_0) \cap \Omega$ with $x_0\in \partial\Omega$ and $\theta\in (0,1)$, $C$ is a constant depending on $\theta, B_R\cap \Omega, F, A, B, \varphi, \beta, \beta_0$ and $|u_\epsilon|_{1;B_R\cap \Omega}$, and is independent of $\epsilon$. Recalling the local gradient estimate in Theorem 3.1 in \cite{JT-oblique-I}, the quantity $|u_\epsilon|_{1;B_R\cap \Omega}$ depends on $B_R\cap \Omega, F, A, B, \varphi, \beta$ and $|u_\epsilon|_{0;B_R\cap \Omega}$, and is independent of $\epsilon$. The locally uniform bound for $|u_\epsilon|_{0;B_R\cap \Omega}$ is obvious from \eqref{uni sol}. With the uniform bound in \eqref{uni N bd}, by repeating the limiting process $\epsilon\rightarrow 0$ in the F1$^-$ case of the proof for Theorem \ref{Th3.1} and considering the uniqueness, we obtain
\begin{equation}
u\in C^{1,1}(\Omega\cup (\mathcal N\cap \bar\Omega)),
\end{equation}
which completes the proof of assertion (iii).

From Theorem \ref{Th4.2}, we have the uniqueness for solutions in $C^{2,\alpha}(\Omega)\cap C^{0,1}(\bar\Omega)$ ($\alpha\in (0,1)$) of the problem \eqref{1.1}-\eqref{1.5}. Then the proof of assertion (iv) is parallel to that of assertion (iii), which can be done by using \eqref{uni N bd} and repeating the limiting process $\epsilon\rightarrow 0$ in the F1 case of the proof for Theorem \ref{Th3.1}. To avoid the repetitions, here we omit its detailed proof.
\end{proof}

With the alternative hypotheses of gradient estimate in Remark \ref{Rem 1.1} and Section \ref{Section 3}, we state the resultant application to Theorem \ref{Th1.1} as a corollary.
\begin{Corollary}\label{Cor 4.1}
Theorem \ref{Th1.1} continues to hold when the condition that A is uniformly regular is replaced by A strictly regular, $|\frac{\beta}{\beta\cdot\nu} - \nu|<1/\sqrt{n}$ and $\mathcal{F}$ is orthogonally invariant, together with any of the conditions (a), (b) or (c) in Remark \ref{Rem 1.1}.
\end{Corollary}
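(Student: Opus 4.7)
The strategy is to trace through the proof of Theorem \ref{Th1.1} in Section \ref{Section 3}, isolate the single step that actually invoked uniform regularity of $A$, and replace only that step. A careful reading shows that uniform regularity entered only in establishing the uniform-in-$\epsilon$ gradient bound \eqref{uni gra} for the regularized solutions $u_\epsilon$; the solution bound \eqref{uni sol}, the interior and boundary second derivative estimates of Section \ref{Section 2}, the passage to the limit $\epsilon\to 0$ via stability of viscosity solutions, and the comparison results of Theorems \ref{Th4.1}, \ref{Th4.2}, \ref{Th4.3} all require only that $A$ be strictly regular. In particular, nothing in Theorem \ref{Th2.1} or in the uniqueness step changes.

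Thus the proof reduces to verifying that a uniform-in-$\epsilon$ gradient estimate $|Du_\epsilon|_{0;\Omega}\le C$ holds for the approximating problem \eqref{re-problem} under each of the three alternative hypotheses (a), (b), (c) of Remark \ref{Rem 1.1} together with $|\frac{\beta}{\beta\cdot\nu}-\nu|<1/\sqrt n$ and orthogonal invariance of $\mathcal F$. Under hypothesis (a) this is the content of Theorem 1.3 of \cite{JT-oblique-I} applied to $F^\epsilon$, using F7(0) when $B\ge 0$ and F7 when $\inf B>0$; under hypothesis (c) it is the convex-domain/$\Gamma_k$ part of Theorem 3.1 of \cite{JT-oblique-I}; and under hypothesis (b) it is exactly the content of the \emph{Alternative hypotheses} subsection of Section \ref{Section 3}, where F2 plus orthogonal invariance is shown to substitute for F7 once the stronger growth \eqref{3.19} is assumed, via the eigenvalue lower bound $F^{kk}\ge \mathscr T/n$ at the maximum point in the minimum-eigenvalue direction. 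In all three cases the hypotheses are stable under the regularization $F\mapsto F^\epsilon$ because $F^\epsilon$ inherits F1$^-$, F2, F3, F5, F5$^+$ with constants independent of $\epsilon$, orthogonal invariance is preserved by \eqref{uni reg}, and the structural conditions on $A$ and $B$ are unaffected.

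With the uniform gradient bound established, the proof of Theorem \ref{Th3.1} proceeds verbatim: solvability of \eqref{re-problem} by uniform elliptic theory, uniform interior second derivative bound from Theorem 1.1 of \cite{JT-oblique-I} (which needs only strict regularity of $A$ in conjunction with F2, F5 or F5$^+$), and the subsequential limit giving assertion (i). Assertion (ii) then follows from Theorems \ref{Th4.2} and \ref{Th4.3}, whose hypotheses are unchanged. For assertions (iii) and (iv), the local boundary second derivative estimate \eqref{est int bdy} of Theorem \ref{Th2.1} applied to $u_\epsilon$ yields a uniform-in-$\epsilon$ bound in any neighbourhood $\mathcal N$ of a boundary point where the $(\Gamma,A,G)$-convexity holds, which combined with the local gradient estimate in Theorem 3.1 of \cite{JT-oblique-I} and the uniqueness from (ii) gives the desired local $C^{1,1}$, respectively $C^{2,\alpha}$, regularity on $\Omega\cup(\mathcal N\cap\bar\Omega)$, exactly as in the proof of Theorem \ref{Th1.1}.

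The main — and essentially the only non-routine — obstacle is to be sure that in case (b) the argument replacing F7 by F2 in the gradient estimate of \cite{JT-oblique-I} goes through uniformly in the regularization. This is already laid out in Section \ref{Section 3} above, both globally (using $\eta=u-u_0$) and locally near the boundary (using $\eta=(u-u_0)+K(u-u_0)^2$ with $u_0=\inf_{\Omega\cap B_R}u$), and applies directly to $F^\epsilon$ since the relevant monotonicity property $D_if\le D_jf$ when $\lambda_i\ge\lambda_j$ is an immediate consequence of the concavity of $F^\epsilon$ and its symmetry under permutation of eigenvalues. Once this is recorded, the Corollary follows.
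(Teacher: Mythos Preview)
Your proposal is correct and follows essentially the same approach as the paper. The paper does not give a separate proof for Corollary \ref{Cor 4.1}; it is stated as an immediate consequence of the \emph{Alternative hypotheses} subsection in Section \ref{Section 3} together with Remark \ref{Rem 1.1}, the point being precisely what you identify: uniform regularity of $A$ enters the proof of Theorem \ref{Th1.1} only through the global gradient estimate \eqref{uni gra}, and each of the alternative packages (a), (b), (c) supplies a substitute gradient bound (via Theorems 1.3 and 3.1 and Remark 3.3 of \cite{JT-oblique-I}, and the F2-in-place-of-F7 argument for (b)) that is stable under the regularization $F\mapsto F^\epsilon$.
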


When $B>a_0=-\infty$ in the F1 case of Theorem \ref{Th1.1} and Corollary \ref{Cor 4.1}, the assertions still hold if $A$ and $B$ satisfy the growth conditions \eqref{quadratic growth} with $B\ge O(1)$ and \eqref{3.26} holds in cases (b) and (c). Furthermore we may also replace the cone $\Gamma$ in Theorem \ref{Th1.1} and Corollary \ref{Cor 4.1} by an open convex set $D$ as indicated in Remark \ref{Rem 1.4}. Note here that we can also embrace the case $a_0 = -\infty$ in Corollary \ref{Cor 4.1}, by taking $D = \{r\in\Gamma | F > a_0\}$, for some finite $a_0 \le B$, as in \cite{ITW2004}.


\end{document}